\tikzstyle arrowstyle=[scale=1]
\tikzstyle directed=[postaction={decorate,decoration={markings,mark=at position .6 with {\arrow[arrowstyle]{stealth}}}}]
\DeclareMathOperator{\dens}{dens}
\DeclareMathOperator{\codens}{codens}
\DeclareMathOperator{\Var}{Var}
\DeclareMathOperator{\im}{Im}
\newcommand{\vide}{\textup{\O}}
\newcommand{\moinsun}{^{-1}}
\newcommand{\bs}{\boldsymbol}
\newcommand{\tendvers}{\xrightarrow[n\to\infty]{}}
\def\Pr{\mathbf{Pr}}
\newcommand\PrCond[2]{\Pr\left( #1 \;\middle|\; #2 \right)}
\newcommand\Cond[2]{\left( #1 \;\middle|\; #2 \right)}
\newcommand\flr[1]{\lfloor #1 \rfloor}
\newcommand\esp[1]{\mathbb{E}\left( #1 \right)}
\newtheorem{thm}{Theorem}[section]
\newtheorem{thmm}{Theorem}
\newtheorem{defi}[thm]{Definition}
\newtheorem{lem}[thm]{Lemma}
\newtheorem{prop}[thm]{Proposition}
\newtheorem{cor}[thm]{Corollary}
\newtheorem*{nota}{Notation}
\newenvironment{thmp}[1]{%
  \manualtheoreminner
}{\endmanualtheoreminner}
\theoremstyle{remark}
\newtheorem{ex}[thm]{Example}
\title{Density of random subsets and applications to group theory}
\author{\textsc{Tsung-Hsuan Tsai}}
\affil{Institut de Recherche Mathématique Avancée\\
\small{\quad\\ \textit{Address:} UFR de Mathématiques (Office 105)\\ 7 rue René Descartes
67000 Strasbourg (France).\\
\textit{E-mail:} tsung-hsuan.tsai@math.unistra.fr}}
\affil{MSC2020: 20F05, 20F06, 60C05}
\affil{Key words: random group, intersection formula, cancellation theory}
\date{}
\begin{document}
\maketitle
\begin{abstract} 
    Developing an idea of M. Gromov in \cite{Gro93} 9.A, we study the intersection formula for random subsets with density. The \textit{density} of a subset $A$ in a finite set $E$ is defined by $\dens A := \log_{|E|}(|A|)$. The aim of this article is to give a precise meaning of Gromov's \textit{intersection formula}: "Random subsets" $A$ and $B$ of a finite set $E$ satisfy $\dens (A\cap B) = \dens A + \dens B -1$.\\
    
    As an application, we exhibit a phase transition phenomenon for random presentations of groups at density $\lambda/2$ for any $0<\lambda<1$, characterizing the $C'(\lambda)$-small cancellation condition. We also improve an important result of random groups by G. Arzhantseva and A. Ol'shanskii in \cite{AO96} from density $0$ to density $0\leq d<\frac{1}{120m^2\ln(2m)}$.
\end{abstract}
\tableofcontents

\section*{Introduction}
\paragraph{Density of subsets.}

Let $A$ be a subset of a finite set $E$. Denote $|E|$, $|A|$ their cardinalities. In \cite{Gro93} 9.A, M. Gromov defined the \textit{density} of $A$ in $E$ as
\[\dens_E (A) := \log_{|E|}(|A|).\]

Namely, $\dens_E(A)$ is the number $d \in \{-\infty\}\cup [0,1]$ such that $|A|=|E|^{d}$. Note that $d = -\infty$ if and only if $A=\vide$. If the set $E$ is fixed, we omit the subscript and simply denote the density by $\dens A$.

In \cite{Gro93} p.270, the \textit{intersection formula} is stated as follows: \textit{Random subsets $A$ and $B$ of a finite set $E$ satisfy
\[\dens (A\cap B) = \dens A + \dens B -1\]
with the convention}
\[\dens A < 0 \iff A = \vide.\]
\quad

If $E$ is a finite-dimensional vector space over a finite field, every affine subspace $A$ satisfies $\dens A = \dim A/\dim E$. The intersection formula is then a "random subset version" of the well-known result for affine subspaces: \textit{Transversal affine subspaces $A$ and $B$ of a vector space $E$ satisfy
\[\dim (A\cap B) = \dim A + \dim B -\dim E\]
with the convention}
\[\dim A < 0 \iff A = \vide.\]

\paragraph{Purpose of the paper.}  In \cite{Gro93} p.270 "explanation", Gromov did not give a precise definition of a random subset with density. In \cite{Gro93} p.272, he proposed that one can consider the class of random subsets defined by measures \textit{invariant under the permutations of $E$}.

In this article, we discuss two basic models of random subsets that are contained in the permutation invariant model: The \textit{uniform density model} and the \textit{Bernoulli density model}. The first one is defined by the uniform distribution on all subsets of $E$ with cardinality $\flr{|E|^d}$. This model is used by Y. Ollivier in \cite{Oll03}, \cite{Oll04} and \cite{Oll07} to study the density model of random groups, and by A. \.Zuk in \cite{Zuk03} to construct random triangular groups. For the Bernoulli density model, every element in $E$ is taken independently with the same probability $|E|^{d-1}$. This model is considered by Antoniuk-Łuczak-\'Swi\k atkowski in \cite{ALS15} to study random triangular groups.

The aim of this article is to establish a general framework for the study of random subsets with densities, and to proof the intersection formula for the class of random subsets that are \textit{densable} and \textit{permutation invariant}.

\paragraph{Random subsets and the intersection formula.}

In the first section, we introduce the notion of \textit{densable sequences of random subsets}. Let $E$ be a finite set. A \textit{random subset} of $E$ is a $\mathcal{P}(E)$-valued random variable, where $\mathcal{P}(E)$ is the set of subsets of $E$. Note that $|A|$ is a usual real-valued random variable. The \textit{density} of $A$ in $E$, defined by $\dens_E A := \log_{|E|} (|A|)$, is hence a random variable with values in $\{-\infty\}\cup[0,1]$.

As our approach is asymptotic when $|E|\to\infty$, we consider a sequence of finite sets $\bs E = (E_n)_{n\in\mathbb{N}}$ where $|E_n|\to\infty$. A \textit{sequence of random subsets} of $\bs E$ is a sequence $\bs A = (A_n)$ where $A_n$ is a random subset of $E_n$ for all $n\in \mathbb{N}$. Such a sequence is \textit{densable} with density $d\in \{-\infty\}\cup[0,1]$ if the sequence of random variables $\dens_{E_n}(A_n)$ converges weakly (i.e. converges in distribution) to the constant $d$ (c.f. \cite{Gro93} p.272). For a sequence of properties $\bs Q = (Q_n)$, we say that $Q_n$ is \textit{asymptotically almost surely} (a.a.s.) satisfied if the probability that $Q_n$ is satisfied goes to $1$ when $n\to\infty$. For example, for a sequence of random subsets $\bs A = (A_n)$, $\dens \bs A = -\infty$ if and only if a.a.s. $A_n=\vide$.\\

In Section 2 we work on the permutation invariant model in \cite{Gro93} p.272. Let $E$ be a finite set. A random subset $A$ of $E$ is \textit{permutation invariant} if its law is invariant under the permutations of $E$. Namely, for any subset $a\in \mathcal{P}(E)$ and any permutation $\sigma \in \mathcal{S}(E)$, we have $\Pr(A = a) = \Pr(A = \sigma(a))$.

Consider a sequence of finite sets $\bs E = (E_n)$ with $|E_n|\to\infty$. Denote by $\mathcal{D}(\bs E)$ the class of \textit{densable sequences of permutation invariant random subsets} of $\bs E$. We prove the intersection formula stated as follows: 

\begin{thmm}[The intersection formula, Theorem \ref{intersection}] Let $\bs A = (A_n), \bs B = (B_n)$ be independent sequences of random subsets in $\mathcal{D}(\bs E)$ with densities $\alpha,\beta$. If $\alpha + \beta \neq 1$, then the sequence of random subsets $\bs A\cap \bs B$ is also in $\mathcal{D}(\bs E)$. In addition: 
    \[\dens (\bs A\cap \bs B) = \begin{cases}
        \alpha + \beta - 1 & \textup{ if } \alpha + \beta > 1\\
        -\infty & \textup{ if } \alpha + \beta < 1.
        \end{cases}\]
\end{thmm}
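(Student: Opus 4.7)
The plan is a standard second-moment concentration argument, after reducing to uniformly distributed subsets of prescribed size via permutation invariance.

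First, I would observe that any permutation invariant random subset of $E_n$ is, conditionally on its cardinality, uniformly distributed among subsets of $E_n$ of that cardinality (since any two subsets of the same size are related by a permutation of $E_n$). Since $\bs A$ and $\bs B$ are independent, conditioning on $a_n := |A_n|$ and $b_n := |B_n|$ makes $A_n$ and $B_n$ independent uniform random subsets of sizes $a_n$ and $b_n$ in $E_n$. Writing $N_n := |E_n|$ and $\mu_n := a_n b_n / N_n$, linearity of expectation over elements of $E_n$ gives $\mathbb{E}(|A_n \cap B_n| \mid a_n, b_n) = \mu_n$, and summing indicator-pair probabilities yields
\[
\frac{\Var(|A_n \cap B_n| \mid a_n, b_n)}{\mu_n^2} \;=\; \frac{1}{\mu_n} + O\!\left(\frac{1}{a_n} + \frac{1}{b_n} + \frac{1}{N_n}\right).
\]

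Next, I would use the densability hypothesis in the form of convergence in probability (equivalent here since the limit is constant): for every $\epsilon > 0$, the event $G_n^\epsilon := \{N_n^{\alpha-\epsilon} \le a_n \le N_n^{\alpha+\epsilon}\} \cap \{N_n^{\beta-\epsilon} \le b_n \le N_n^{\beta+\epsilon}\}$ holds asymptotically almost surely. (If $\alpha$ or $\beta$ equals $-\infty$, then one of $A_n, B_n$ is a.a.s.\ empty and the statement is trivial.) If $\alpha + \beta > 1$, then since $\alpha, \beta \le 1$ both $\alpha, \beta > 0$, so for small enough $\epsilon$ we have $a_n, b_n \to \infty$ and $\mu_n \ge N_n^{\alpha + \beta - 1 - 2\epsilon} \to \infty$ on $G_n^\epsilon$. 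Chebyshev's inequality conditionally on $(a_n, b_n)$ then gives $|A_n \cap B_n|/\mu_n \to 1$ in probability, and dividing by $\log N_n$ yields $\log_{N_n}|A_n \cap B_n| \to \alpha + \beta - 1$ in probability. If $\alpha + \beta < 1$, then $\mu_n \le N_n^{\alpha + \beta - 1 + 2\epsilon} \to 0$ on $G_n^\epsilon$ for small $\epsilon$, and Markov's inequality gives $\Pr(|A_n \cap B_n| \ge 1 \mid a_n, b_n) \le \mu_n \to 0$, so $A_n \cap B_n = \vide$ a.a.s., i.e.\ $\dens(\bs A \cap \bs B) = -\infty$. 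Permutation invariance of $\bs A \cap \bs B$ is immediate from that of $\bs A, \bs B$ together with independence, so $\bs A \cap \bs B \in \mathcal{D}(\bs E)$ in both cases.

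The main obstacle is not any single computation but rather the careful interplay between the conditional concentration given $(a_n, b_n)$ and the high-probability control of $(a_n, b_n)$ itself: one must combine the two uniformly over all $(a_n, b_n)$ in the good event $G_n^\epsilon$, which is a routine but delicate Fubini-type argument. A secondary subtlety is that when $\alpha$ or $\beta$ is close to $0$ the cardinality $a_n$ or $b_n$ may only grow subpolynomially in $N_n$, so one must check that the error terms $1/a_n + 1/b_n$ in the variance bound still vanish; they do, because $\alpha + \beta > 1$ forces $\alpha, \beta > 0$, whence $a_n, b_n \to \infty$ a.a.s. Finally, the excluded critical case $\alpha + \beta = 1$ genuinely lies beyond the reach of the second-moment method, as $\mu_n$ is then of order $1$ and neither Chebyshev nor Markov forces a deterministic limit.
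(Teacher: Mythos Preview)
Your proposal is correct and follows essentially the same route as the paper: reduce to uniform random subsets by conditioning on the cardinalities $(|A_n|,|B_n|)$ (the paper's Proposition~\ref{decomposition}), compute the conditional first and second moments of $|A_n\cap B_n|$ (the paper's Lemma~\ref{concentration uniform}), apply Markov/Chebyshev conditionally, and then average over the high-probability event where both cardinalities are concentrated. The only difference is one of presentation: the paper isolates an explicit quantitative bound (Lemma~\ref{aux intersection}) that is uniform over all $(\alpha',\beta')$ in a small window around $(\alpha,\beta)$, precisely to carry out the ``Fubini-type'' step you identify as the main delicate point, whereas you leave that step as a routine (but correctly flagged) exercise in asymptotic notation.
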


The density $-\infty$ means that a.a.s. the random subset is empty.\\

In Section 3, we study the intersection between a random subset and a \textit{fixed} subset. We develop a generalized form: the \textit{multi-dimensional intersection formula}. Let $\bs E = (E_n)$ be a sequence of finite sets with $|E_n|\to\infty$. Denote $E_n^{(k)}$ the set of pairwise distinct $k$-tuples of the set $E_n$. Let $\bs A$ be a sequence of random subsets in $\mathcal{D}(\bs E)$ (densable and permutation invariant). We are interested in the intersection between $\bs A^{(k)}$ and a densable sequence of subsets $\bs X$ of $\bs E^{(k)}$.

For $k\geq 2$, the intersection formula is in general not correct (see example \ref{cex no intersection formula}). We show that by an additional \textit{self-intersection condition} on $\bs X$, we can achieve an intersection formula.

\begin{thmm}[The multi-dimensional intersection formula, Theorem \ref{multidim intersection}] Let $\bs A = (A_n)$ be a sequence of random subsets in $\mathcal{D}(\bs E)$ with density $0<d<1$. Let $\bs X = (X_n)$ be a densable sequence of fixed subsets of $\bs E^{(k)}$ with density $\alpha$.
    \begin{enumerate}[(i)]
        \item If $d + \alpha <1$, then a.a.s.
        \[A_n^{(k)}\cap X_n = \vide.\]
        \item If $d + \alpha >1$ and $\bs X$ satisfies the $d$-small self intersection condition (Definition \ref{multidim condition}), then the sequence of random subsets $\bs A^{(k)}\cap \bs X$ is densable and 
        \[\dens (\bs A^{(k)}\cap \bs X) = \alpha + d - 1.\]
    \end{enumerate} 
\end{thmm}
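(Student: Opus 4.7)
The plan is to apply the first and second moment method, exploiting the permutation invariance of $\bs A$ by conditioning on $|A_n|$. Fix $\varepsilon > 0$ and work on the event $\Omega_n^\varepsilon := \{|E_n|^{d-\varepsilon} \leq |A_n| \leq |E_n|^{d+\varepsilon}\}$, whose probability tends to $1$ since $\bs A$ is densable with density $d$. Conditionally on $|A_n|=m$, permutation invariance makes $A_n$ uniform among size-$m$ subsets of $E_n$, so for any fixed $\bs x \in E_n^{(k)}$,
\[
\PrCond{\bs x \in A_n^{(k)}}{|A_n|=m} \;=\; \frac{m(m-1)\cdots(m-k+1)}{|E_n|(|E_n|-1)\cdots(|E_n|-k+1)},
\]
which on $\Omega_n^\varepsilon$ is $|E_n|^{k(d-1)+O(\varepsilon)}$. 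Summing over the $|X_n| = |E_n|^{k\alpha+o(1)}$ tuples of $X_n$ gives $\esp{|A_n^{(k)} \cap X_n|\,\mathbb{1}_{\Omega_n^\varepsilon}} = |E_n|^{k(d+\alpha-1)+O(\varepsilon)}$. This already settles \emph{(i)}: if $d+\alpha<1$, choosing $\varepsilon$ small enough that the exponent is negative and applying Markov conditionally on $\Omega_n^\varepsilon$ yields $\PrCond{A_n^{(k)} \cap X_n \neq \vide}{\Omega_n^\varepsilon} \to 0$, and the bound $\Pr(\Omega_n^{\varepsilon,c}) \to 0$ finishes the argument.

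For \emph{(ii)} I would compute the second moment and apply Chebyshev. Expanding the square and stratifying by $s := |\{x_1,\ldots,x_k\} \cup \{y_1,\ldots,y_k\}| \in \{k, \ldots, 2k\}$,
\[
\esp{|A_n^{(k)} \cap X_n|^2 \,\mathbb{1}_{\Omega_n^\varepsilon}} \;=\; \sum_{s=k}^{2k} N_s(X_n) \cdot |E_n|^{s(d-1)+O(\varepsilon)},
\]
where $N_s(X_n)$ counts ordered pairs $(\bs x,\bs y) \in X_n^2$ whose underlying sets have union of size $s$. The top stratum $s=2k$ (disjoint tuples) supplies $(1+o(1))\bigl(\esp{|A_n^{(k)} \cap X_n|}\bigr)^2$. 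The $d$-small self-intersection condition should be designed precisely so that, for every $s < 2k$, one has $N_s(X_n) = o\bigl(|X_n|^2 \cdot |E_n|^{-(2k-s)(1-d)}\bigr)$, which forces the corresponding stratum to be $o(|E_n|^{2k(d+\alpha-1)})$. The variance is thus negligible compared to the squared first moment, so Chebyshev delivers $|A_n^{(k)} \cap X_n| = |E_n|^{k(d+\alpha-1)+o(1)}$ a.a.s.; dividing the logarithm by $\log|E_n^{(k)}| = (k+o(1))\log|E_n|$ gives densability with density $\alpha+d-1$ as claimed.

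The main obstacle I anticipate is the bookkeeping in the second-moment step: ordered $k$-tuples can overlap in a substantial combinatorial variety of patterns across the strata $s \in \{k, \ldots, 2k-1\}$, and one must verify that Definition \ref{multidim condition} is calibrated to dominate each one simultaneously (while still being mild enough to hold in the intended applications of the theorem). A secondary subtlety is that the low-probability event $\Omega_n^{\varepsilon,c}$ could \emph{a priori} inflate unconditional moments when $|A_n|$ is atypically large on it; this is neatly sidestepped by carrying out every estimate conditionally on $\Omega_n^\varepsilon$, which suffices because densability is a convergence-in-probability statement.
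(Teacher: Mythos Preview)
Your proposal is correct and follows essentially the same route as the paper: condition on $|A_n|$ to reduce to the uniform model, use Markov for (i), and for (ii) compute the variance via the self-intersection stratification (your index $s=2k-i$ is just the paper's $i$ in disguise) and apply Chebyshev. The paper organizes this as three passes (Bernoulli, uniform, general) with explicit $\varepsilon$-bookkeeping, and in particular proves the one point you state without justification---that the top stratum contributes $(1+o(1))\esp{|A_n^{(k)}\cap X_n|}^2$---by showing $\Pr(\{x_1,\dots,x_k\}\subset A_n)^2-\Pr(\{x_1,\dots,x_{2k}\}\subset A_n)\geq 0$ for fixed $|A_n|$ (negative association under sampling without replacement); you should make this explicit, but otherwise your plan matches the paper's proof.
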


The intersection formula in $\bs E$ between a random subset and a fixed subset is a special case of this theorem by taking $k=1$.

\paragraph{Applications to group theory: random groups.}

The last section is dedicated to applications to group theory, more precisely to small cancellation theory.\\

The first mention of generic property for finitely presented groups appears in the late 80's in the works of V. S. Guba \cite{Guba86} and M. Gromov \cite{Gro87}. In \cite{Guba86}, the author showed that for "almost every" group presented by $m\geq4$ generators and one "long" relator, any $2$-generated subgroup is free. In \cite{Gro87}, Gromov defined two models of random group presentations with fixed number of generators and relators.

In 1993, Gromov introduced the \textit{density model} of random groups in \cite{Gro93} 9.B. The number of generators is still fixed, but the number of relators \textit{grows exponentially} with the length of the relators, determined by a density parameter $d$. A \textit{phase transition phenomenon} is then stated as follows: if $d < 1/2$, then a.a.s. the random group is infinite hyperbolic; whereas if $d>1/2$, then a.a.s. the random group is trivial.

In a 1996 paper \cite{AO96}, G. Arzhantseva and A. Ol’shanskii generalized Guba's result. They proved that for "almost every" group presented by $m\geq2$ generators and $k\geq1$ long relators , any $(m-1)$-generated subgroup is free. In their model, the number of generators $k$ is fixed, as in Gromov's 1987 model \cite{Gro87}. This model is called the Arzhantseva-Ol’shanskii model, or the \textit{few relator model} of random groups.

For more detailed surveys on random groups, see (in chronological order) \cite{Ghys04} by E. Ghys, \cite{Oll05} by Y. Ollivier, \cite{KS08} by I. Kapovich and P. Schupp and \cite{BNW} by F. Bassino, C. Nicaud and P. Weil.

Fix a set of alphabets $X = \{x_1,\dots, x_m\}$ as generators of groups. Denote by $B_\ell$ the set of cyclically reduced words of $X^\pm$ of lengths at most $\ell$. If $S_\ell$ is the set of cyclically reduced words of length exactly $\ell$, it is clear that $2m(2m-1)^{\ell-2}(2m-2) \leq |S_\ell| \leq 2m(2m-1)^{\ell-1}$. So

\[\frac{2m}{2m-1}\left[(2m-1)^\ell-1\right] \leq |B_\ell| \leq \frac{2m}{2m-2}\left[(2m-1)^\ell-1\right].\]

As we are interested in asymptotic behaviors when $\ell\to\infty$, we can write $|B_\ell| = (2m-1)^{\ell+O(1)}$. Consider $\bs B = (B_\ell)_{\ell\geq 1}$ as our ambient sequence of sets. Let $d\in \{-\infty\} \cup[0,1]$. A \textit{sequence of random groups} with density $d$, denoted by $\bs G(m,d) = (G_\ell(m,d))$, is defined by random presentations $G_\ell(m,d) = \langle X | R_\ell \rangle$ where $\bs R = (R_\ell)$ is a densable sequence of permutation invariant random subsets of $\bs B$ with density $d$.\\

The first mention of the $\lambda/2$ phase transition for the $C'(\lambda)$-small cancellation condition is by Gromov in \cite{Gro93} p.273, showing that if $2d<\lambda$ then a random group at density $d$ satisfies $C'(\lambda)$. He remarked also that, in particular, if $d<1/12$ then the group is hyperbolic; and if $d>1/12$ then the group is not $C'(1/6)$. Ollivier-Wise gave a detailed proof of $d<\lambda/2$ implying $C'(\lambda)$ in \cite{OW11} Proposition 1.8. In \cite{Oll05} p.31 Ollivier stated the phase transition : if $d>\lambda/2$ then $C'(\lambda)$ does not hold. However, his "dimension reasoning" is the $2$-dimensional intersection formula between a random subset (pairs of distinct relators in a random group) and a \textit{fixed} subset (pairs of distinct relators denying $C'(\lambda)$), which does not hold in general (as Example 3.3 shows). 

Bassino-Nicaud-Weil gave a proof of $d>\lambda/2$ implying non-$C'(\lambda)$ in \cite{BNW} p.7 (Theorem 2.1). Their argument showed that the pairs of distinct relators \textit{in a random group} denying $C'(\lambda)$ is not empty, but did not give its density.

The $d$-small self-intersection condition (Definition 3.6) for a fixed subset is introduced to rule out this difficulty. By the multi-dimensional intersection formula (\textbf{Theorem 2}), we show that if $d>\lambda/2$, then the pairs of distinct relators in a random group denying $C'(\lambda)$ is with density $d-\lambda/2$ and hence not empty.

\begin{thmm}[Phase transition at density $\lambda/2$, Theorem \ref{lambda small cancellation}] Let $\bs G(m,d) = (G_\ell(m,d))$ be a random group with $m$ generators and with density $d$. Let $\lambda\in ]0,1[$.
\begin{enumerate}
    \item If $d<\lambda/2$, then a.a.s. $G_\ell(m,d)$ satisfies $C'(\lambda)$.
    \item If $d>\lambda/2$, then a.a.s. $G_\ell(m,d)$ does not satisfy $C'(\lambda)$.
\end{enumerate}
\end{thmm}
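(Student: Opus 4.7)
The plan is to apply the multi-dimensional intersection formula (Theorem 2) with $k=2$, ambient sequence $\bs B = (B_\ell)$ and random subset $\bs R = (R_\ell)$ of density $d$. Define $X_\ell \subset B_\ell^{(2)}$ to be the set of ordered pairs $(w_1,w_2)$ of distinct cyclically reduced words such that some cyclic conjugate of $w_1^{\pm 1}$ and some cyclic conjugate of $w_2^{\pm 1}$ share a common prefix of length at least $\lambda\min(|w_1|,|w_2|)$. By definition of $C'(\lambda)$, the presentation $G_\ell(m,d) = \langle X \mid R_\ell\rangle$ fails $C'(\lambda)$ if and only if $R_\ell^{(2)}\cap X_\ell \neq \vide$. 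Thus everything reduces to the density of $\bs X = (X_\ell)$ and to checking that the two regimes of Theorem 2 are applicable.

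The first computation is $\dens_{\bs B^{(2)}}(\bs X) = 1 - \lambda/2$. Since $|B_\ell| = (2m-1)^{\ell+O(1)}$, the dominant contribution comes from pairs of words of length close to $\ell$. Counting goes as follows: choose $w_1$, which gives at most $(2m-1)^\ell$ options; choose a sign, a cyclic position on $w_1$, a sign and cyclic position on $w_2$, and the length $|w_2|\leq \ell$, contributing only a $\mathrm{poly}(\ell)$ factor; the piece of length $\geq \lambda\ell$ then fixes a subword of $w_2$, leaving at most $(2m-1)^{(1-\lambda)\ell}$ completions. Hence $|X_\ell| = (2m-1)^{(2-\lambda)\ell + o(\ell)}$, so that $\dens_{B_\ell^{(2)}}(X_\ell) \to 1-\lambda/2$. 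The lower bound matching this upper bound follows by fixing a prefix of length exactly $\lceil \lambda\ell\rceil$ and freely completing both words. This immediately handles part (1): if $d<\lambda/2$ then $d + \dens\bs X < 1$, and Theorem 2(i) gives a.a.s.\ $R_\ell^{(2)}\cap X_\ell = \vide$, i.e.\ $C'(\lambda)$ holds.

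For part (2), $d + \dens \bs X = d + 1 - \lambda/2 > 1$, and Theorem 2(ii) would yield $\dens(\bs R^{(2)}\cap \bs X) = d - \lambda/2 > 0 \neq -\infty$, so a.a.s.\ $R_\ell^{(2)}\cap X_\ell \neq \vide$ and $C'(\lambda)$ fails. The main obstacle, and the only nontrivial verification, is that $\bs X$ satisfies the $d$-small self-intersection condition of Definition~\ref{multidim condition}. Concretely, one must show that the pieces of $X_\ell$ with many repeated coordinates, or more generally the sub-configurations that would cause the naive intersection count to concentrate on too few of its coordinates, have strictly smaller exponential size than what the formula demands. For $k=2$ this amounts to bounding, for each fixed $w_1$, the number of $w_2$ with $(w_1,w_2)\in X_\ell$ by roughly $(2m-1)^{(1-\lambda)\ell}$ (which is already the upper bound used above), and symmetrically for the second coordinate. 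Since the piece-sharing constraint is symmetric in $w_1,w_2$ and the counting above is tight uniformly in $w_1$, the self-intersection condition reduces to the same kind of elementary combinatorial estimate, only with a margin of $d$ to absorb polynomial factors and lower-order terms; this is a refined but routine version of the density computation of the second paragraph.
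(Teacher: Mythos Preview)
Your overall strategy matches the paper's, but there is a genuine gap in Part~(1). Your biconditional ``$G_\ell$ fails $C'(\lambda)$ if and only if $R_\ell^{(2)}\cap X_\ell \neq \vide$'' is false in the ``only if'' direction: a single relator $r\in R_\ell$ can have a piece of length $\geq \lambda|r|$ with \emph{itself} (i.e.\ with a different cyclic conjugate of $r$ or of $r^{-1}$), and this violates $C'(\lambda)$ without producing any element of $R_\ell^{(2)}\cap X_\ell$, since $X_\ell\subset B_\ell^{(2)}$ consists of pairs of \emph{distinct} words. The paper handles this as a separate case: it introduces the set $A_\ell\subset B_\ell$ of cyclically reduced words having a self-piece longer than $\lambda$ times their length, computes $\dens_{\bs B}(A_\ell)=1-\lambda$, and applies the one-dimensional random-fixed intersection formula (Corollary~\ref{random-fixed intersection}) to conclude that a.a.s.\ $R_\ell\cap A_\ell=\vide$ when $d<\lambda/2<\lambda$. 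Only after ruling out both self-pieces and pieces between distinct relators does $C'(\lambda)$ follow. This is not hard to fix, but as written your Part~(1) does not prove the claim.

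For Part~(2) your approach is the same as the paper's, and your argument is essentially complete, though your verification of the $d$-small self-intersection condition is sketched rather than carried out. The paper makes this explicit: $Y_{1,\ell}$ consists (up to symmetry) of configurations $((r_1,r_2),(r_1,r_3))$ with $r_1,r_2,r_3$ distinct, and the count you describe gives $|Y_{1,\ell}|=(2m-1)^{(3-2\lambda)\ell+o(\ell)}$, so $\dens_{(\bs B^{(2)})^2}\bs Y_1=(3-2\lambda)/4$. The inequality in Definition~\ref{multidim condition} becomes $(3-2\lambda)/4 < (1-\lambda/2) + (d-1)/4$, which simplifies to $d>0$ and therefore holds. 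Your uniform-in-$w_1$ bound is exactly what drives this computation, so the substance is there; it would just strengthen the write-up to state the resulting density and the final inequality explicitly.
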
\quad

It was given as an "interesting problem" in \cite{Oll05} I.3.c that some algebraic properties of random groups at density $0$ (\cite{AO96} by Arzhantseva-Ol'shanskii, \cite{Arz97}, \cite{Arz98}, \cite{Arz00} by Arzhantseva, and \cite{KS05} by Kapovich-Schupp) may extend to some positive density $d$. In \cite{KS08} Theorem 7.5, Kapovich and Schupp extends Arzhantzeva's "all $L$-generated subgroups of infinite index are free" result \cite{Arz97} (for a fixed $L>0$) to some density $d>0$ independent of $m$. A property is called "low-density random" by Kapovich-Schupp in \cite{KS08} p.3 if the corresponding density $d(m)$ is positive but converges to $0$ when $m$ goes to infinity. They claimed that Arzhantseva-Ol'shanskii's "all $(m-1)$-generated subgroups are free" result \cite{AO96} is a low-density random property (\cite{KS08} Theorem 1.1 (2), Theorem 5.4 (2)), but the density $d(m)$ is not given.

In our study, the number of generators $m$ is fixed, and we look for a density $d(m)$ such that the "all $(m-1)$-generated subgroups are free" property holds for a random group with $m$ generators of density $d<d(m)$. Using \textbf{Theorem2} and \textbf{Theorem 3}, we give an explicit bound $d(m)=\frac{1}{120m^2\ln(2m)}$ that extends Arzhantseva-Ol'shanskii's result in \cite{AO96} from density $0$ to density $0 \leq d < d(m)$.

\begin{thmm}[Every $(m-1)$-generated subgroup is free, Theorem \ref{AO with density}]
Let $(G_\ell(m,d))$ be a sequence of random groups with $m$ generators and with density \[0\leq d<\frac{1}{120m^2\ln(2m)}.\] 
Then a.a.s. every $(m-1)$-generated subgroup of $G_\ell(m,d)$ is free.
\end{thmm}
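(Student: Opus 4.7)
The plan is to translate Arzhantseva--Ol'shanskii's proof from \cite{AO96} into the density model, using Theorem 3 to secure a small-cancellation regime and Theorem 2 to rule out the combinatorial patterns that obstruct freeness of $(m-1)$-generated subgroups. The bound $1/(120 m^2 \ln(2m))$ emerges from an explicit optimization over a finite list of bad patterns.

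First, I fix a small $\lambda > 0$ sufficient to trigger AO's small-cancellation toolbox (for instance $\lambda = 1/6$). Since $1/(120 m^2 \ln(2m)) < \lambda/2$ for every $m \geq 2$, Theorem 3 guarantees that $G_\ell(m,d)$ satisfies $C'(\lambda)$ a.a.s., and I work on this event from here on. Then I invoke the AO dichotomy: under $C'(\lambda)$, a non-free $(m-1)$-generated subgroup of $\langle X \mid R_\ell \rangle$ produces a reduced van Kampen diagram whose boundary reads a non-trivial reduced word in $m-1$ prescribed subgroup generators and whose cells carry cyclic conjugates of relators from $R_\ell$. The AO analysis extracts from such a diagram a $k$-tuple of relators of $R_\ell$, with $k$ bounded in terms of $m$, fitting one of finitely many combinatorial schemes $\Sigma$ that prescribe the skeleton, the gluings along interior pieces, and the boundary arcs.

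Each scheme $\Sigma$ determines a fixed subset $X_\ell(\Sigma) \subset B_\ell^{(k)}$. A bound of the form
\begin{equation*}
|X_\ell(\Sigma)| \;\leq\; N(\Sigma) \cdot (2m)^{O(m\ell)} \cdot \prod_{i=1}^{k} (2m-1)^{(1 - p_i(\Sigma))\ell + O(1)},
\end{equation*}
where $p_i(\Sigma) \geq \lambda$ is the proportion of $r_i$ determined by the gluings, is obtained by counting skeletons, boundary labellings, and free portions of each relator. Dividing by $|B_\ell^{(k)}| = (2m-1)^{k\ell + O(1)}$ yields the density $\alpha(\Sigma)$ of $\bs X(\Sigma)$; the $d$-small self-intersection condition (Definition \ref{multidim condition}) is verified by the same kind of estimate, since two overlapping tuples must share extra pieces, strictly decreasing the exponent. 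Applying Theorem 2 to each $\Sigma$ and taking a union bound over the finite list of schemes shows $R_\ell^{(k)} \cap X_\ell(\Sigma) = \emptyset$ a.a.s.\ for every $\Sigma$, provided $d + \alpha(\Sigma) < 1$ holds in all cases, which precisely forbids the bad configurations and hence the existence of a non-free $(m-1)$-generated subgroup.

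The remaining task is to verify $d + \alpha(\Sigma) < 1$ uniformly in $\Sigma$ under the hypothesis $d < 1/(120 m^2 \ln(2m))$. This reduces to an explicit small-cancellation optimization over the AO schemes, in which the factor $m^2 \ln(2m)$ in the denominator arises from two sources: a factor of $m \ln(2m)$ for the cost of labelling the boundary arcs, combined with the change of base $\log_{2m-1}(2m) = 1 + O(1/\ln(2m))$, and an additional factor of $m$ from the number of pieces in the extremal AO scheme. The constant $120$ is absolute and leaves enough slack to absorb all schemes simultaneously. I expect this final optimization to be the main obstacle: the rest of the proof is a direct assembly of Theorems 2 and 3 with the AO diagrammatic dichotomy, whereas pinning down the extremal scheme and checking that the denominator $120 m^2 \ln(2m)$ suffices requires the careful combinatorial bookkeeping that constitutes the heart of the AO count, now repackaged in density terms.
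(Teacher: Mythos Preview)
Your proposal diverges substantially from the paper's proof and contains a genuine gap.

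The paper does \emph{not} redo the Arzhantseva--Ol'shanskii diagrammatic analysis via multi-dimensional schemes. Instead it quotes two results from \cite{AO96} as black boxes: (1) if $\mu<\log_{2m}\!\bigl(1+\tfrac{1}{4m-4}\bigr)$, the set $M_\ell^\mu\subset B_\ell$ of words with a long $\mu$-readable subword has density at most $\log_{2m-1}(2m-\tfrac54)$; and (2) if $R$ avoids $M_\ell^\mu$, has no proper powers, and satisfies $C'(\lambda)$ with $\lambda\le \mu/(15m+3\mu)$, then every $(m-1)$-generated subgroup is free. All three conditions on $R_\ell$ are then checked using only the \emph{one}-dimensional random--fixed intersection formula (Corollary~\ref{random-fixed intersection}) together with Theorem~\ref{lambda small cancellation}. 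The multi-dimensional Theorem~\ref{multidim intersection} is not invoked at all here. The bound $d<1/(120m^2\ln(2m))$ is simply $\lambda/2$ with $\lambda=\mu/(15m+3\mu)$ and $\mu$ just below $\log_{2m}\!\bigl(1+\tfrac{1}{4m-4}\bigr)\sim 1/(4m\ln(2m))$; this is where the factor $m^2\ln(2m)$ really comes from, not from ``boundary labelling costs'' or ``the extremal scheme.''

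The specific gap in your plan is the choice of $\lambda$. You fix $\lambda=1/6$ (or any constant independent of $m$) and say this is ``sufficient to trigger AO's small-cancellation toolbox.'' It is not: the AO argument requires $C'(\lambda)$ for $\lambda\le \mu/(15m+3\mu)$, which is of order $1/(m^2\ln m)$, not a constant. With $\lambda=1/6$ you cannot feed the diagrams into the $\mu$-readable machinery, and the subsequent ``finitely many schemes with $k$ bounded in $m$'' description does not correspond to what \cite{AO96} actually proves. If you want to salvage your approach you would have to let $\lambda$ depend on $m$ from the start---at which point the simplest path is exactly the paper's: invoke the $\mu$-readable lemmas directly and apply the one-dimensional intersection formula, with no need for schemes or the $d$-small self-intersection condition.
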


Ollivier remarked in \cite{Oll05} p.71 that at density $d>1-\log_{2m-1}(2m-3)$, the rank of a random group with $m$ generators with density $d$ is at most $m-1$, so the "all $(m-1)$-generated subgroups are free" property fails. There is still a large gap between $\log_{2m-1}(2m-3) \sim \frac{1}{m\ln(2m)}$ and $\frac{1}{120m^2\ln(2m)}$.\\[1em]

\paragraph{Acknowledgements.} I would like to thank my supervisor, Thomas Delzant, for his guidance and interesting discussions on the subject, especially for his patience with me while completing this article. I would also like to thank the referee for his/her thorough review of the manuscript and highly appreciate the comments and suggestions, which significantly contributed to improving the quality of this work.

\section{Definitions and basic models}

\subsection{Densable sequences of random subsets}

Let $E$ be a finite set, denote $|E|$ its cardinality. The following definition is due to M. Gromov in \cite{Gro93}.

\begin{defi} Let $E$ be a finite non-empty set and $A\subset E$. The density of $A$ in $E$ is defined by
\[\dens_E A := \log_{|E|}|A| = \frac{\log |A|}{\log |E|}.\]
So that $d\in[0,1]\cup\{-\infty\}$ is a real number such that $|E|^d = |A|$.
\end{defi}

We will omit the subscript $E$ if the set is fixed and simply denote the density by $\dens A$. Note that $\dens A = -\infty$ if and only if $A = \vide$.\\
\begin{defi}
Let $E$ be a finite set. Denote $\mathcal{P}(E)$ the set of subsets of $E$. A random subset $A$ of $E$ is a $\mathcal{P}(E)$-valued random variable.
\end{defi}

In this article, we use upper-case letters $A,B,C,\dots$ to denote random subsets and lower-case letters $a,b,c,\dots$ to denote fixed subsets. The law of a random subset $A$ is determined by instances $\Pr(A = a)$ through all subsets $a\in \mathcal{P}(E)$ (or $a\subset E$). Its cardinality $|A|$ is a usual real-valued random variable.\\

Here we give some basic examples of random subsets.
\begin{ex}(Examples of random subsets)\label{ex random subsets}
\begin{enumerate}[(i)]
    \item (Dirac model) A fixed subset $c\subset E$ can be regarded as a constant random subset. Its law is
    \[\Pr(A=a)=\begin{cases}
        1 & \textup{ if } a = c \\
        0 & \textup{ if } a\neq c.
        \end{cases}\]

    \item (Uniform random subset) Fix an integer $k\leq |E|$. Let $A$ be the uniform distribution on all subsets of $E$ of cardinality $k$. Its law is
    \[\Pr(A=a)=\begin{cases}
        \binom{|E|}{k}\moinsun & \textup{ if } |a| = k \\
        0 & \textup{ if } |a| \neq k.
        \end{cases}\]
    \item (Bernoulli random subset) Let $A$ be the Bernoulli sampling of parameter $p\in[0,1]$ on the set $E$: The events $\{x\in A\}$ through all $x\in E$ are independent of the same probability $p$. The law of $A$ is
    \[\Pr(A=a)=p^{|a|}(1-p)^{|E|-|a|}.\]
    In this case $|A|$ follows the binomial law $\mathcal{B}(|E|,p)$.
\end{enumerate}
\end{ex}\quad

As usual random variables, a random subset can be constructed by other random subsets.
\begin{ex}(Set theoretic operations)
The intersection of two random subsets $A,B$ of a finite set $E$ is another random subset. The law of $A\cap B$ is
\[\Pr(A\cap B = c) = \sum_{a,b\in\mathcal{P}(E);a\cap b = c}\Pr(A = a , B = b).\]

In particular, if $A,B$ are \textit{independent} random subsets, then 
\[\Pr(A\cap B = c) = \sum_{a,b\in\mathcal{P}(E);a\cap b = c}\Pr(A = a)\Pr(B = b).\]

The union of two subsets and the complement of a subset are similarly defined.
\end{ex}\quad\\

We are interested in the asymptotic behavior of random subsets when $|E|\to\infty$. Consider a sequence of finite sets $\bs{E} = (E_n)_{n\in\mathbb{N}}$ with $|E_n|\xrightarrow[n\to\infty]{}\infty$. Recall that the density of a subset $a\subset E$ is defined by $\dens_E (a) := \log_{|E|}|a|$.

\begin{defi}[Densable sequence of random subsets]\quad
\begin{enumerate}[$(i)$]
    \item A sequence of (fixed) subsets of $\bs E = (E_n)$ is a sequence $\bs{a} = (a_n)$ such that $a_n\subset E_n$ for all $n$.
    
    A sequence of subsets $\bs{a}$ is \textbf{densable} with density $d \in [0,1]\cup \{-\infty\}$ if 
    \[\dens_{E_n}(a_n) = \log_{|E_n|}|a_n|\tendvers d.\]
    \item Similarly, a sequence of random subsets of $\bs E$ is a sequence $\bs{A} = (A_n)$ such that $A_n$ is a random subset of $E_n$ for all $n$.
    
    A sequence of random subsets $\bs{A}$ is \textbf{densable} with density $d\in [0,1]\cup \{-\infty\}$ if the sequence of real-valued random valuables $\dens_{E_n}(A_n) = \log_{|E_n|}|A_n|$ converges in distribution to the constant $d$. 

    \item Two sequences of random subsets $\bs A = (A_n)$, $\bs B = (B_n)$ of $\bs E$ are independent if $A_n$, $B_n$ are independent random subsets of $E_n$ for all $n$.
\end{enumerate}
\end{defi}\quad\\

Here we give some examples of sequences of random subsets.
\begin{ex}[Examples of densable sequences of random subsets]\label{ex densable sequences}\quad
\begin{enumerate}[(i)]
    \item For a fixed sequence of subsets $\bs a = (a_n)$, $\dens(\bs a) = -\infty$ if and only if $a_n = \vide$ for large enough $n$.

    \item A densable sequence of subsets $\bs{a} = (a_n)$ can be regarded as a densable sequence of random subsets (Dirac model on each term). If we take $|a_n| = \lfloor|E_n|^d\rfloor$ with some $0\leq d\leq 1$, then $\bs a$ is densable with density $d$.
    
    \item (Uniform density model) Let $\bs A = (A_n)$ be a sequence of random subsets of $\bs E$. $\bs A$ is a sequence of uniform random subsets with density $d$ if $A_n$ is the uniform distribution on all subsets of $E_n$ of cardinality $\lfloor|E_n|^d\rfloor$. Its law is
        \[\Pr(A_n=a)=\begin{cases}
        \binom{|E_n|}{\lfloor|E_n|^d\rfloor}\moinsun & \textup{ if } |a| = \lfloor|E_n|^d\rfloor \\
        0 & \textup{ if } |a| \neq \lfloor|E_n|^d\rfloor.
        \end{cases}\]
    \item (Bernoulli density model) Let $d>0$. If $A_n$ is a Bernoulli sampling of $E_n$ with parameter $|E_n|^{d-1}$, then $\bs A = (A_n)$ is a sequence of densable random subsets of $\bs E$. It is rather not obvious that such sequences are densable (see Proposition \ref{densBernoulli}).
\end{enumerate}
\end{ex}\quad

\begin{defi}\label{definition a.a.s.}
Let $\bs{Q} = (Q_n)$ be a sequence of events. The event $Q_n$ is \textbf{asymptotically almost surely} true if $\Pr(Q_n)\xrightarrow[n\to\infty]{}1$. 

Equivalently, for any $p<1$ arbitrary close to $1$ we have $\Pr(Q_n)>p$ for $n$ large enough. We denote briefly a.a.s. $Q_n$.
\end{defi}

For example, if $\bs A$ is a sequence of random subsets with $\dens \bs A = -\infty$, then $\Pr(|A_n| = 0)\tendvers 1$. Which is equivalent to a.a.s. $|A_n| = 0$, or a.a.s. $A_n = \vide$.\\

\begin{prop}[Characterization of densability]\label{Concentration}
Let $\bs A$ be a sequence of random subsets of $\bs E$. Let $d\geq 0$. $\bs A$ is densable with density $d$ if and only if
\[\forall \varepsilon > 0 \textup{ a.a.s. } |E_n|^{d-\varepsilon}\leq |A_n|\leq |E_n|^{d+\varepsilon}. \]
\end{prop}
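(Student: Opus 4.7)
The plan is to reduce the proposition to the standard probability-theoretic fact that convergence in distribution to a constant is equivalent to convergence in probability. Concretely, $\bs A$ is densable with density $d$ means that $\dens_{E_n}(A_n)$ converges weakly to the Dirac mass at $d$; since the limit is degenerate, this is equivalent to: for every $\varepsilon>0$,
\[ \Pr\bigl( |\dens_{E_n}(A_n) - d | \leq \varepsilon \bigr) \tendvers 1, \]
with the convention that $|{-\infty} - d| = +\infty$ whenever $A_n = \vide$. All that remains is to translate this one-sided condition on $\dens_{E_n}(A_n)$ into the two-sided cardinality bound on $|A_n|$ by exponentiating in base $|E_n|$.

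For the forward direction, I assume $\bs A$ is densable with density $d\geq 0$ and fix $\varepsilon>0$. A.a.s.\ we have $|\dens_{E_n}(A_n) - d| \leq \varepsilon$; in particular, a.a.s.\ $\dens_{E_n}(A_n) \neq -\infty$ (so $A_n \neq \vide$), and thus $d-\varepsilon \leq \log_{|E_n|}|A_n| \leq d+\varepsilon$. Exponentiating yields $|E_n|^{d-\varepsilon} \leq |A_n| \leq |E_n|^{d+\varepsilon}$ a.a.s., which is the desired inequality.

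For the backward direction, fix $\varepsilon>0$ and assume a.a.s.\ $|E_n|^{d-\varepsilon}\leq|A_n|\leq |E_n|^{d+\varepsilon}$. Since $|E_n|^{d-\varepsilon}>0$ and $|A_n|$ is integer-valued, the lower bound forces $|A_n|\geq 1$, so $A_n\neq\vide$ a.a.s.\ and $\dens_{E_n}(A_n)$ is a real number. Taking $\log_{|E_n|}$ of the inequality gives $d-\varepsilon \leq \dens_{E_n}(A_n) \leq d+\varepsilon$ a.a.s., which is exactly convergence in probability of $\dens_{E_n}(A_n)$ to $d$, and hence convergence in distribution to the constant $d$.

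There is no real obstacle here; the only thing requiring care is the boundary case $A_n=\vide$, i.e.\ $\dens_{E_n}(A_n)=-\infty$. In the forward direction this is ruled out by choosing $\varepsilon$ small enough that $d-\varepsilon>-\infty$ (automatic since $d\geq 0$), and in the backward direction it is ruled out by the fact that $|E_n|^{d-\varepsilon}$ is strictly positive. Both arguments are robust under the convention made on $\dens$ taking values in $\{-\infty\}\cup[0,1]$.
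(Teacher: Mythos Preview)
Your proof is correct and follows exactly the same approach as the paper: both reduce the statement to the standard fact that convergence in distribution to a constant is equivalent to convergence in probability, and then exponentiate the inequality $|\log_{|E_n|}|A_n|-d|\leq\varepsilon$ in base $|E_n|$. Your treatment is actually a bit more careful than the paper's, since you explicitly address the boundary case $A_n=\vide$ (i.e.\ $\dens_{E_n}(A_n)=-\infty$), which the paper leaves implicit.
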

\begin{proof}
The convergence in distribution to a constant is equivalent to the convergence in probability. So $\log_{|E_n|}|A_n|$ converges in distribution to $d$ if and only if
\[\forall \varepsilon > 0 \quad \Pr(|\log_{|E_n|}|A_n|-d|\leq \varepsilon)\tendvers 1,\]
which gives the estimation
\[\forall \varepsilon > 0 \textup{ a.a.s. } |E_n|^{d-\varepsilon}\leq |A_n|\leq |E_n|^{d+\varepsilon}.\]
\end{proof}

In general, the intersection of two densable sequences is not necessarily densable. The intersection formula is then \textit{not} satisfied by the class of densable sequences of random subsets. Here is a simple example.

\begin{ex} Let $\bs E = (E_n)$ be a sequence of sets with $|E_n| = 2n$. Let $\bs a = (a_n)$, $\bs b = (b_n)$ be sequences of subsets of $\bs E$ such that $b_n = E_n\backslash a_n$ and $|a_n|=|b_n|=n$. They are both densable subsets with density $1$ because $\log (n)/\log (2n)\to 1$. Whereas $\dens (\bs a\cap \bs b) = -\infty$. They do not verify the intersection formula.

Define another sequence of subset $\bs c = (c_n)$ by $c_n := a_n$ if $n$ is odd and $c_n := b_n$ if $n$ is even. By its definition, $\bs c$ is densable with density $1$. But the intersection $\bs b\cap \bs c$ is empty when $n$ is odd and non-empty when $n$ is even, so $\bs b\cap \bs c$ is not densable.
\end{ex}\quad

\subsection{The Bernoulli density model}

Let $\bs E = (E_n)$ with $|E_n|\to\infty$ be the ambient sequence of sets.

\begin{defi}[Bernoulli density model]
Let $d\leq 1$. Let $\bs{A} = (A_n)$  be a sequence of random subsets of $\bs E$. It is a sequence of Bernoulli random subsets with density $d$ if $A_n$ is a Bernoulli sampling of $E_n$ with parameter $|E_n|^{d-1}$.
\end{defi}

This model is particularly easy to manipulate. We will see that it is densable, closed under intersection and verifies the intersection formula.

Recall that the real-valued random variable $|A_n|$ follows the binomial law $\mathcal{B}(|E_n|,|E_n|^{d-1})$. Thus $\mathbb{E}(|A_n|) = |E_n|^d$.

\begin{lem}[Concentration lemma]\label{concentration Bernoulli} Let $\bs{A}$ be a sequence of Bernoulli random subsets with density $d>0$. Then a.a.s.
\[\left||A_n| - |E_n|^d\right|\leq \frac{1}{2}|E_n|^{d}.\]
\end{lem}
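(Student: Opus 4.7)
The plan is to use Chebyshev's inequality, since we already know the expectation and can easily bound the variance of a binomial random variable.

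First I would write down the basic moments: since $|A_n|$ follows the binomial law $\mathcal{B}(|E_n|, |E_n|^{d-1})$, we have $\mathbb{E}(|A_n|) = |E_n|^{d}$ (as noted just before the statement), and
\[
\mathrm{Var}(|A_n|) = |E_n| \cdot |E_n|^{d-1} \cdot (1 - |E_n|^{d-1}) \leq |E_n|^d.
\]
Thus the standard deviation is at most $|E_n|^{d/2}$, which is much smaller than the mean $|E_n|^d$ (this is where $d > 0$ is used: both the mean and the gap between mean and standard deviation grow with $|E_n|$).

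Next, applying Chebyshev's inequality with deviation $\tfrac{1}{2}|E_n|^d$ gives
\[
\Pr\!\left(\bigl||A_n| - |E_n|^d\bigr| > \tfrac{1}{2}|E_n|^d\right) \leq \frac{\mathrm{Var}(|A_n|)}{\bigl(\tfrac{1}{2}|E_n|^d\bigr)^2} \leq \frac{4|E_n|^d}{|E_n|^{2d}} = \frac{4}{|E_n|^d}.
\]
Since $d > 0$ and $|E_n| \to \infty$, this bound tends to $0$, so the complementary event holds asymptotically almost surely, which is exactly the claim.

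There is no real obstacle here; the only thing worth being careful about is that the argument genuinely requires $d > 0$ (if $d = 0$ the mean is constant while the standard deviation is of order $1$, so Chebyshev is useless). One could tighten the constant $\tfrac{1}{2}$ to anything going to zero slower than $|E_n|^{-d/2}$, but as stated the Chebyshev bound is more than enough, and this lemma is presumably used downstream in the proof that the Bernoulli density model is densable with density $d$ (Proposition \ref{densBernoulli} mentioned in Example \ref{ex densable sequences}(iv)).
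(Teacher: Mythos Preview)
Your proof is correct and essentially identical to the paper's: both compute the binomial variance $\Var(|A_n|)=|E_n|^d(1-|E_n|^{d-1})$ and apply Chebyshev's inequality with deviation $\tfrac{1}{2}|E_n|^d$ to obtain a bound of order $|E_n|^{-d}\to 0$. The only cosmetic difference is that you bound the variance above by $|E_n|^d$ before applying Chebyshev, whereas the paper carries the factor $(1-|E_n|^{d-1})$ through.
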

\begin{proof}
By Chebyshev's inequality,
\[\Pr\left(\left||A_n|-|E_n|^d\right| > \frac{1}{2}|E_n|^d\right)\leq
\frac{\Var(|A_n|)}{\frac{1}{4}|E_n|^{2d}} \leq
\frac{4|E_n|^d(1-|E_n|^{d-1})}{|E_n|^{2d}}\tendvers 0.\]
\end{proof}

\begin{prop} \label{densBernoulli} Let $\bs{A}$ be a sequence of Bernoulli random subsets with density $d$. If $d\neq 0$, then $\bs A$ is densable and:
\[\dens\bs{A} = 
    \begin{cases}   d & \textup{ if } \; 0<d\leq 1 \\
                    -\infty & \textup{ if }\;  d<0.
    \end{cases}\]
\end{prop}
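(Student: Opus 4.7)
The plan is to split the proposition into the two cases $0 < d \leq 1$ and $d < 0$, handling them with completely different tools: the concentration lemma for the positive-density case, and a direct first-moment argument for the negative case.

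For the case $0 < d \leq 1$, I would invoke Lemma \ref{concentration Bernoulli} directly. It yields that a.a.s.
\[\tfrac{1}{2}|E_n|^d \leq |A_n| \leq \tfrac{3}{2}|E_n|^d.\]
Taking $\log_{|E_n|}$ and using that $\log_{|E_n|}(1/2)$ and $\log_{|E_n|}(3/2)$ both tend to $0$ as $|E_n|\to\infty$, this gives that for every $\varepsilon > 0$, a.a.s. $|E_n|^{d-\varepsilon}\leq |A_n|\leq |E_n|^{d+\varepsilon}$. By the characterization of densability (Proposition \ref{Concentration}), $\bs A$ is densable with density $d$.

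For the case $d < 0$, I would argue that $A_n$ is a.a.s. empty. Since $|A_n|$ follows the binomial law $\mathcal{B}(|E_n|,|E_n|^{d-1})$, its expectation is $|E_n|^d$. By Markov's inequality,
\[\Pr(A_n \neq \vide) = \Pr(|A_n| \geq 1) \leq \esp{|A_n|} = |E_n|^d \tendvers 0,\]
since $d < 0$ and $|E_n|\to\infty$. Hence a.a.s. $A_n = \vide$, which by definition of the density convention means $\dens \bs A = -\infty$. Equivalently, $\log_{|E_n|}|A_n|$ converges in distribution to $-\infty$.

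The proof is essentially routine given the preparatory lemma; the only subtle point is the clean separation between regimes, and the fact that $d = 0$ must be excluded (if $d = 0$ the expected cardinality is $1$ and $|A_n|$ tends in distribution to a Poisson variable, preventing convergence of $\dens A_n$ to a constant). I do not anticipate any real obstacle.
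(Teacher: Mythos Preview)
Your proof is correct and essentially identical to the paper's own argument: the paper also uses Markov's inequality for the case $d<0$ and the concentration lemma (Lemma \ref{concentration Bernoulli}) together with Proposition \ref{Concentration} for the case $0<d\leq 1$. The only cosmetic difference is the order in which the two cases are presented.
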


\begin{proof}
\begin{enumerate} [(i)]
    \item If $d<0$, by Markov's inequality \[\Pr(|A_n|\geq 1) \leq |E_n|^d\to 0,\] so $\Pr(A_n = \vide)\to 1$ and $\Pr(\log_{|E_n|}|A_n| = -\infty)\to 1$.
    \item If $0< d\leq 1$, by Lemma \ref{concentration Bernoulli} a.a.s.    \[\frac{1}{2}|E_n|^d\leq|A_n|\leq\frac{3}{2}|E_n|^d.\]
    For every $\varepsilon>0$, the inequality $|E_n|^{d-\varepsilon} < \frac{1}{2}|E_n|^d < \frac{3}{2}|E_n|^d < |E_n|^{d+\varepsilon}$ holds for $n$ large enough. Thus a.a.s.
    \[|E_n|^{d-\varepsilon}\leq|A_n|\leq|E_n|^{d+\varepsilon}.\]
    Hence $\bs A =(A_n)$ is densable with density $d$ by Proposition \ref{Concentration}.
\end{enumerate}
\end{proof}

\begin{thm}[The intersection formula for Bernoulli density model]\label{intersection Bernoulli} Let $\bs A,\bs B$ be independent sequences of Bernoulli random subsets of $\bs E = (E_n)$ with densities $\alpha,\beta$. Then $\bs{A}\cap\bs{B}$ is a sequence of Bernoulli random subsets of $\bs E$ with density $\alpha+\beta-1$, and
\[\dens (\bs A\cap \bs B) = \begin{cases}
\alpha + \beta - 1 & \textup{ if } \alpha + \beta > 1\\
-\infty & \textup{ if } \alpha + \beta < 1.
\end{cases}
\]
\end{thm}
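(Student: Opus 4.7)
The plan is to reduce the statement directly to Proposition \ref{densBernoulli} by observing that the intersection of two independent Bernoulli samplings is itself a Bernoulli sampling, with probability equal to the product of the two individual probabilities.

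First I would verify the key structural claim: if $A_n$ is a Bernoulli sampling of $E_n$ with parameter $p = |E_n|^{\alpha-1}$ and $B_n$ is an independent Bernoulli sampling with parameter $q = |E_n|^{\beta-1}$, then $A_n \cap B_n$ is a Bernoulli sampling with parameter $pq$. The argument is: for each $x \in E_n$, the event $\{x \in A_n \cap B_n\}$ is the intersection of the independent events $\{x \in A_n\}$ and $\{x \in B_n\}$, so it has probability $pq$. Moreover, across distinct $x \in E_n$, the events $\{x \in A_n \cap B_n\}$ remain jointly independent, since they are determined by disjoint coordinates of the independent product structures of $A_n$ and $B_n$. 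This can be made rigorous by expanding $\Pr(A_n \cap B_n = c)$ as the sum over pairs $(a,b)$ with $a \cap b = c$, factoring using independence, and checking that the result equals $(pq)^{|c|}(1-pq)^{|E_n|-|c|}$.

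Next I compute the parameter: $pq = |E_n|^{\alpha-1} \cdot |E_n|^{\beta-1} = |E_n|^{(\alpha+\beta-1) - 1}$. Thus $\bs A \cap \bs B$ is a sequence of Bernoulli random subsets with density parameter $\alpha + \beta - 1$ in the sense of the definition of the Bernoulli density model.

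Finally I invoke Proposition \ref{densBernoulli} with density parameter $d = \alpha+\beta-1$: if $\alpha+\beta > 1$ then $d > 0$ and $\dens(\bs A \cap \bs B) = \alpha+\beta-1$; if $\alpha+\beta < 1$ then $d < 0$ and $\dens(\bs A \cap \bs B) = -\infty$. The main (and only real) obstacle is the bookkeeping in the first step, namely carefully verifying independence of the events $\{x \in A_n \cap B_n\}$ across $x$; everything else is immediate from the preceding results.
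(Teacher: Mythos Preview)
Your proposal is correct and follows essentially the same route as the paper: show that the intersection of two independent Bernoulli samplings is again a Bernoulli sampling with parameter equal to the product, compute this parameter as $|E_n|^{(\alpha+\beta-1)-1}$, and then invoke Proposition~\ref{densBernoulli}. If anything, your version is slightly more careful than the paper's, which only writes out the pairwise independence check $\Pr(x,y\in A_n\cap B_n)=\Pr(x\in A_n\cap B_n)\Pr(y\in A_n\cap B_n)$ rather than the full joint independence you correctly flag as the one point requiring bookkeeping.
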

\begin{proof}
For every elements $x\in E_n$, $\Pr(x\in A_n\cap B_n) = \Pr(x\in A_n)\Pr(x\in B_n) = |E_n|^{(\alpha+\beta-1)-1}$. In addition, for every pair of distinct elements $x,y$ in $E_n$
\[\begin{split}\Pr(x,y\in A_n\cap B_n) & = \Pr(x,y\in A_n)\Pr(x,y\in B_n)\\
& = \Pr(x\in A_n)\Pr(y\in A_n)\Pr(x\in B_n)\Pr(y\in B_n)\\
& = \Pr(x\in A_n\cap B_n)\Pr(y\in A_n\cap B_n).
\end{split}\]
So $\bs{A}\cap\bs{B}$ is a sequence of Bernoulli random subsets with density $\alpha+\beta-1$. Proposition \ref{densBernoulli} gives its density.
\end{proof}

As the theorem shows, the class of Bernoulli random subsets is \textit{closed under intersections}. Thereby the intersection formula works for multiple independent sequences of random subsets. The formula is more concise in terms of \textit{codensities}.

\begin{defi}[c.f. \cite{Gro93} p.269]
Let $\bs A$ be a densable sequence of random subsets such that $\dens \bs A\in [0,1]$. Then the codensity of $\bs A$ is defined by:
\[\codens{\bs A} = 1- \dens{\bs A}.\]
\end{defi}

Theorem \ref{intersection Bernoulli} can be rephrase as (compare \cite{Gro93} p.270):

\begin{thmp}{\ref{intersection Bernoulli}'}[The intersection formula by codensities] Let $\bs A,\bs B$ be independent sequences of Bernoulli random subsets of $\bs E$ with positive densities. If $\codens \bs A + \codens\bs B<1$, then
\[\codens (\bs A\cap \bs B) = \codens \bs A + \codens\bs B.\]

 If $\codens \bs A + \codens\bs B > 1$, then $\dens(\bs A \cap \bs B) = -\infty$.
\end{thmp}

\begin{cor}[Generalized intersection formula by codensities] Let $\bs A_1, \dots, \bs A_k$ be independent sequences of Bernoulli random subsets with positive densities. If $\displaystyle{\sum_{i=1}^k\codens \bs A_i<1}$, then
\[\codens\left(\bigcap_{i=1}^k \bs A_i\right) = \sum_{i=1}^k\codens \bs A_i.\]

If $\displaystyle{\sum_{i=1}^k\codens \bs A_i > 1}$, then $\dens\left(\bigcap_{i=1}^k \bs A_i\right) = -\infty$.
\end{cor}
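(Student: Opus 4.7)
The plan is to reduce directly to Theorem \ref{intersection Bernoulli} combined with Proposition \ref{densBernoulli}. The key structural observation, proved for two sequences inside Theorem \ref{intersection Bernoulli}, is that an intersection of independent Bernoulli samplings is again a Bernoulli sampling, and this extends with no additional work to any finite number of independent samplings. Thus the class of sequences of Bernoulli random subsets is closed under finite intersections, so the only thing to compute is the resulting parameter.

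Concretely, I would set $\alpha_i := \dens \bs A_i$, so $\codens \bs A_i = 1-\alpha_i$, and let $d := 1 - \sum_{i=1}^{k}\codens \bs A_i = \sum_{i=1}^{k}\alpha_i - (k-1)$. By joint independence of the sequences $\bs A_1,\dots,\bs A_k$, together with the independence of the events $\{x \in (\bs A_i)_n\}$ over $x\in E_n$ built into each Bernoulli sampling, the family of events $\{x \in (\bs A_i)_n\}$ for $x\in E_n$ and $i\in\{1,\dots,k\}$ is mutually independent, with probabilities $|E_n|^{\alpha_i - 1}$. Hence, for each $x\in E_n$,
\[\Pr\left(x \in \bigcap_{i=1}^{k}(\bs A_i)_n\right) = \prod_{i=1}^{k}|E_n|^{\alpha_i - 1} = |E_n|^{d-1},\]
and the events $\{x\in\bigcap_i (\bs A_i)_n\}$ remain independent as $x$ ranges over $E_n$. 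Therefore $\bigcap_{i=1}^{k} \bs A_i$ is a sequence of Bernoulli random subsets of $\bs E$ with parameter $|E_n|^{d-1}$.

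It remains to apply Proposition \ref{densBernoulli} to this Bernoulli sequence. When $\sum_{i=1}^k \codens \bs A_i < 1$ we have $d > 0$, so the proposition yields $\dens\bigl(\bigcap_i \bs A_i\bigr) = d$, equivalently $\codens\bigl(\bigcap_i \bs A_i\bigr) = \sum_{i=1}^k \codens \bs A_i$. When $\sum_{i=1}^k \codens \bs A_i > 1$ we have $d < 0$, and the proposition gives density $-\infty$. The excluded value $\sum_{i=1}^k \codens \bs A_i = 1$ corresponds precisely to $d = 0$, the single case not treated by Proposition \ref{densBernoulli}; this matches the hypothesis of the corollary and requires no further analysis. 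There is no substantive obstacle here: an induction on $k$ applying Theorem \ref{intersection Bernoulli} at each step would yield the same conclusion with slightly more bookkeeping.
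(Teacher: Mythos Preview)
Your argument is correct and matches the paper's intended approach: the paper states this corollary without proof, merely observing beforehand that Theorem \ref{intersection Bernoulli} shows the class of Bernoulli sequences is closed under intersection, so the two-set formula iterates. Your direct computation of the Bernoulli parameter for the $k$-fold intersection is exactly the $k$-factor version of the proof of Theorem \ref{intersection Bernoulli}, and the concluding appeal to Proposition \ref{densBernoulli} is the same.
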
\quad\\

As an exception, a Bernoulli sequence of random subsets with density $d = 0$ is \textit{not densable}.

\begin{prop}\label{dens0Bernoulli} Let $\bs A$ be a Bernoulli sequence with density $d = 0$. Then $\bs A$ is \textbf{not} densable. In fact,
\[\Pr(\dens A_n = -\infty) \tendvers 1/e\; .\]
\end{prop}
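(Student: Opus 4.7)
The plan is to compute $\Pr(A_n = \vide)$ directly and observe that it converges to $1/e$, which already prevents densability.

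First, I would write $N_n := |E_n|$. Since $\bs A$ has density $d=0$, each element of $E_n$ belongs to $A_n$ independently with probability $N_n^{d-1} = 1/N_n$. Consequently the events $\{x \notin A_n\}$ for $x \in E_n$ are independent with probability $1-1/N_n$, giving
\[
\Pr(A_n = \vide) = \left(1 - \frac{1}{N_n}\right)^{N_n}.
\]
Since $N_n \to \infty$, the classical limit $(1-1/N)^N \to e^{-1}$ yields $\Pr(A_n = \vide) \to 1/e$. By the convention $\dens A = -\infty \iff A = \vide$, this is exactly $\Pr(\dens A_n = -\infty) \to 1/e$.

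To conclude non-densability, I would argue by contradiction. Suppose $\bs A$ were densable with some density $d' \in \{-\infty\}\cup[0,1]$. If $d' = -\infty$, then by the definition of densability (a.a.s.\ $A_n = \vide$) one would have $\Pr(A_n = \vide) \to 1$; if instead $d' \in [0,1]$, then $\dens_{E_n}(A_n)$ converges in distribution to the finite constant $d'$, which forces $\Pr(\dens A_n = -\infty) \to 0$. Both possibilities contradict the limit $1/e \in (0,1)$ obtained above.

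There is no real obstacle here: the computation reduces to the classical $(1-1/N)^N \to e^{-1}$, and the dichotomy argument for non-densability is immediate once one observes that $1/e$ is neither $0$ nor $1$. The only mild subtlety is making explicit that densability with a \emph{finite} density $d'$ would require the mass at $-\infty$ to vanish in the limit, which I would state as a short remark before applying it.
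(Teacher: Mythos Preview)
Your proof is correct and follows essentially the same approach as the paper: compute $\Pr(A_n=\vide)=(1-|E_n|^{-1})^{|E_n|}\to 1/e$ and conclude that $\dens A_n$ cannot converge in distribution to a constant. The paper's proof is terser, leaving the final dichotomy ($1/e$ is neither $0$ nor $1$) implicit, whereas you spell it out explicitly.
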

\begin{proof}
$\Pr(|A_n|=0)=(1-|E_n|\moinsun)^{|E_n|}\tendvers 1/e$, which gives
\[\Pr(\dens A_n = -\infty) \tendvers 1/e.\]
This justifies that the sequence of random variables $(\dens_{E_n} A_n)$ does not converge to any constant distribution.
\end{proof}
\subsection{The uniform density model}

The uniform density model is the first example of densable sequences of random subsets. It is introduced by M.Gromov \cite{Gro93} to construct random groups with fixed generators, and later developed by Y. Ollivier \cite{Oll05}. It is also used by A. \.Zuk \cite{Zuk03} to study random triangular groups.

Let $\bs E = (E_n)$ be a sequence of sets. To simplify, we assume that $|E_n|=n$ in this subsection. For an arbitrary sequence $\bs E$ with $|E_n|\to\infty$ we can proceed similar proofs by replacing $n$ by $|E_n|$. Note that $|E_n|^d = n^d\sim\lfloor n^d\rfloor$ while $n\to\infty$ for $d\in [0,1]$. 

Recall that a sequence of uniform random subsets (example \ref{ex densable sequences} (iv)) of $(E_n)$ with density $d$ is a sequence of random subsets $(A_n)$ with the following laws:
    \[\Pr(A_n=a)=\begin{cases}
    \binom{n}{\flr{n^d}}\moinsun & \textup{ if } |a| = \lfloor n^d\rfloor \\
    0 & \textup{ if } |a| \neq \lfloor n^d\rfloor.
    \end{cases}\] 

We give here a concentration lemma for uniform density model, similar to Lemma \ref{concentration Bernoulli}. For the proof we need Lemma \ref{closed} and Lemma \ref{E and Var} in the next section.

\begin{lem}[Concentration lemma for uniform density model]\label{concentration uniform} Let $\bs A,\bs B$ be independent sequences of uniform random subsets of $\bs E$ with densities $\alpha,\beta\in [0,1]$. Then:
\begin{enumerate}[$(i)$]
    \item $n^{\alpha+\beta-1}-2\leq \mathbb{E}(|A_n\cap B_n|) \leq n^{\alpha+\beta-1}$.
    \item If $\alpha<1$ and $\beta <1$, then $\Var(|A_n\cap B_n|) \sim n^{\alpha+\beta-1}$.\\[6pt]
   Moreover, if $n\geq 3$, then $\Var(|A_n\cap B_n|) \leq 3n^{\alpha+\beta-1}$.
    \item Let $0<c<1$. If $\alpha+\beta - 1 > 0$ and $n\geq \left(\frac{4}{c}\right)^{\frac{1}{\alpha+\beta-1}}$, then
    \[\Pr\left(\left||A_n\cap B_n|-n^{\alpha+\beta-1}\right|> cn^{\alpha+\beta-1}\right)\leq \frac{12}{c^2n^{\alpha+\beta-1}}\tendvers 0.\] 
    In particular, a.a.s.
    \[\left||A_n\cap B_n|-n^{\alpha+\beta-1}\right| \leq cn^{\alpha+\beta-1}.\]
\end{enumerate}
\end{lem}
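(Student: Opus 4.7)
The plan is to reduce $|A_n\cap B_n|$ to a sum of indicator variables, namely $|A_n\cap B_n| = \sum_{x\in E_n}\mathbbm{1}_{x\in A_n}\mathbbm{1}_{x\in B_n}$, and to exploit independence of $A_n$ and $B_n$ together with the exchangeability of the uniform law on $E_n$. For $(i)$, a uniform random subset of size $\flr{n^\alpha}$ contains a given element with probability $\flr{n^\alpha}/n$, so linearity of expectation and independence give
\[\mathbb{E}(|A_n\cap B_n|) = n\cdot\frac{\flr{n^\alpha}}{n}\cdot\frac{\flr{n^\beta}}{n} = \frac{\flr{n^\alpha}\flr{n^\beta}}{n}.\]
Sandwiching with $n^t-1\leq \flr{n^t}\leq n^t$ then yields $n^{\alpha+\beta-1} - n^{\alpha-1} - n^{\beta-1} + n^{-1} \leq \mathbb{E}(|A_n\cap B_n|) \leq n^{\alpha+\beta-1}$, and since the three correction terms are bounded in absolute value by $2$, the statement follows.

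For $(ii)$, by independence of $A_n$ and $B_n$ and exchangeability the pairwise joint probability factors as $\Pr(x,y\in A_n\cap B_n) = \frac{\flr{n^\alpha}(\flr{n^\alpha}-1)}{n(n-1)}\cdot\frac{\flr{n^\beta}(\flr{n^\beta}-1)}{n(n-1)}$. Expanding $\Var(|A_n\cap B_n|)$ as $n$ diagonal variances plus $n(n-1)$ pairwise covariances, I expect the cancellations to collapse everything into the clean closed form
\[\Var(|A_n\cap B_n|) = \frac{\flr{n^\alpha}\flr{n^\beta}(n-\flr{n^\alpha})(n-\flr{n^\beta})}{n^2(n-1)}.\]
Once this is obtained, $\Var(|A_n\cap B_n|)\sim n^{\alpha+\beta-1}$ is immediate whenever $\alpha,\beta<1$, because then both $n-\flr{n^\alpha}$ and $n-\flr{n^\beta}$ are asymptotic to $n$. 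The uniform bound follows from $(n-\flr{n^\alpha})(n-\flr{n^\beta})\leq n^2$ together with $n/(n-1)\leq 3/2$ for $n\geq 3$, so $\Var(|A_n\cap B_n|)\leq \tfrac{3}{2}n^{\alpha+\beta-1}\leq 3n^{\alpha+\beta-1}$.

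For $(iii)$, I would apply Chebyshev after recentering. By $(i)$, $|\mathbb{E}(|A_n\cap B_n|)-n^{\alpha+\beta-1}|\leq 2$, and the hypothesis $n\geq (4/c)^{1/(\alpha+\beta-1)}$ forces $2\leq (c/2)n^{\alpha+\beta-1}$; a triangle inequality then reduces the statement to bounding $\Pr\bigl(\bigl||A_n\cap B_n|-\mathbb{E}(|A_n\cap B_n|)\bigr|>(c/2)n^{\alpha+\beta-1}\bigr)$, and Chebyshev combined with the variance bound from $(ii)$ produces exactly $12/(c^2 n^{\alpha+\beta-1})$, which tends to $0$ and yields the a.a.s.\ conclusion.

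The main obstacle is the variance computation in $(ii)$: one must carry out the cancellations between the diagonal sum and the (negative) pairwise covariances to reach the clean closed form, which is the step where the referenced lemmas of the next section (closed-form expectation and variance of intersection cardinalities in the uniform model) will do the heavy lifting. Everything else is bookkeeping: the hypothesis on $n$ in $(iii)$ is chosen precisely so that the $\pm 2$ error between $\mathbb{E}(|A_n\cap B_n|)$ and $n^{\alpha+\beta-1}$ is absorbed by halving the tail radius.
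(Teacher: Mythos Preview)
Your proposal is correct and follows essentially the same route as the paper: compute $\mathbb{E}(|A_n\cap B_n|)=\flr{n^\alpha}\flr{n^\beta}/n$ via linearity and independence, derive the variance from the pairwise inclusion probability $\frac{\flr{n^\alpha}(\flr{n^\alpha}-1)}{n(n-1)}\cdot\frac{\flr{n^\beta}(\flr{n^\beta}-1)}{n(n-1)}$ (your factored form $\frac{kl(n-k)(n-l)}{n^2(n-1)}$ is exactly the paper's expression $\frac{kl}{n^2(n-1)}(n^2-nl-nk+kl)$), and then combine the $\pm 2$ recentering with Chebyshev for $(iii)$. The only point you omit is that in $(iii)$ the variance bound from $(ii)$ requires $\alpha,\beta<1$, so the boundary cases $\alpha=1$ or $\beta=1$ (where $A_n=E_n$ or $B_n=E_n$ and the claim is trivial) should be disposed of separately, as the paper does.
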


\begin{proof}\quad
\begin{enumerate}[$(i)$]
    \item By Lemma \ref{closed}, $A_n\cap B_n$ is a permutation invariant random set of $E_n$. Apply Lemma \ref{E and Var}:
    \[\begin{split}\mathbb{E}(|A_n\cap B_n|)& = n \Pr(x\in A_n\cap B_n) = n \Pr(x\in A_n)\Pr(x\in B_n)\\
    & = n\frac{\mathbb{E}(|A_n|)}{n}\frac{\mathbb{E}(|B_n|)}{n}
    = \flr{n^\alpha}\flr{n^\beta}n\moinsun \sim n^{\alpha + \beta -1}. \end{split}\]
    For the inequality, as $\alpha, \beta\leq 1$:
    \[n^{\alpha+\beta-1}-2\leq n^{\alpha+\beta-1} - n^{\alpha-1} - n^{\beta-1} + n\moinsun\leq  \flr{n^\alpha}\flr{n^\beta}n\moinsun \leq n^{\alpha+\beta-1}.\]
    \item Let $x,y$ be distinct elemensts in $E$. The number of subsets of $E$ containing $x,y$ of cardinality $\flr{n^\alpha}$ is $\binom{n-2}{\flr{n^\alpha}-2}$, so
    \[\Pr(x,y\in A_n) = \frac{\binom{n-2}{\flr{n^\alpha}-2}}{\binom{n}{\flr{n^\alpha}}} = \frac{\flr{n^\alpha}(\flr{n^\alpha}-1)}{n(n-1)}.\]
    Similarly,
    \[\Pr(x,y\in B_n) = \frac{\flr{n^\beta}(\flr{n^\beta}-1)}{n(n-1)}.\]
    Denote $k=\flr{n^\alpha}$ and $l=\flr{n^\beta}$ to simplify the notation. Note that $k = o(n)$ and $l=o(n)$ as $\alpha<1$ and $\beta<1$. Recall that $\mathbb{E}(|A_n\cap B_n|) = kln\moinsun$. Apply Lemma \ref{E and Var}, the variance of $|A_n\cap B_n|$ is
    \[\begin{split}
        \Var(|A_n\cap B_n|) & = kln\moinsun +n(n-1)\Pr(x,y\in A_n)\Pr(x,y\in B_n)-(kln\moinsun)^2\\
        & = kln\moinsun +\frac{k(k-1)l(l-1)}{n(n-1)}-(kln\moinsun)^2\\
        & = \frac{kl}{n^2(n-1)}(n^2-n+nkl-nl-nk+n-nkl+kl)\\
        & \sim \frac{kl}{n^2(n-1)}\cdot n^2 \sim n^{\alpha+\beta-1}.
        \end{split}\]
    Moreover, if $n\geq 3$ then:
    \[\Var(|A_n\cap B_n|) = \frac{kl}{n^2(n-1)}(n^2-nl-nk+kl)\\
    \leq \frac{2kl}{n-1}\leq \frac{2n^{\alpha+\beta}}{n-1}\leq 3n^{\alpha+\beta-1}.\]
    
    \item By $(i)$ if $n\geq \left(\frac{4}{c}\right)^{\frac{1}{\alpha+\beta-1}}\geq 4$, then\[\left|\mathbb{E}(|A_n\cap B_n|)-n^{\alpha+\beta-1}\right|\leq \frac{c}{2}n^{\alpha+\beta-1}.\]
    If $\alpha = 1$ or $\beta = 1$ then the result is true as the $A_n = E_n$ or $B_n=E_n$. Otherwise by $(ii)$ and Chebyshev's inequality, if $n\geq \left(\frac{4}{c}\right)^{\frac{1}{\alpha+\beta-1}}$ then
    \[\begin{split} &\quad\Pr\left(\left||A_n\cap B_n|-n^{\alpha+\beta-1}\right|> cn^{\alpha+\beta-1}\right)\\
    & \leq \Pr\left(\left||A_n\cap B_n|-\mathbb{E}(|A_n\cap B_n|)\right|> \frac{c}{2}n^{\alpha+\beta-1}\right)\\
    & \leq \frac{4\Var(|A_n\cap B_n|)}{c^2n^{2\alpha+2\beta-2}} \leq  \frac{12}{c^2n^{\alpha+\beta-1}}.
    \end{split}\]
\end{enumerate}
\end{proof}

\begin{prop} [The intersection formula for uniform density model]\label{intersection uniform}
Let $\bs A,\bs B$ be independent sequences of uniform random subsets of $\bs E$ with densities $\alpha,\beta$. If $\alpha + \beta \neq 1$, then the sequence $\bs A\cap \bs B$ is densable and
        \[\dens(\bs A\cap \bs B) = \begin{cases}
            \alpha + \beta -1 &  \textup{ if } \alpha + \beta > 1\\
            -\infty & \textup{ if } \alpha + \beta < 1.
        \end{cases}\]
\end{prop}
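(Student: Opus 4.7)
The plan is to split by the sign of $\alpha+\beta-1$. All of the probabilistic content has already been packaged into the concentration Lemma \ref{concentration uniform}, so I would simply combine it with the densability characterization of Proposition \ref{Concentration} in the supercritical regime, and invoke a one-line Markov bound in the subcritical regime.

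In the case $\alpha+\beta>1$, I would apply Lemma \ref{concentration uniform} (iii) with $c=1/2$ to obtain that a.a.s.
\[\frac{1}{2}n^{\alpha+\beta-1}\leq |A_n\cap B_n|\leq \frac{3}{2}n^{\alpha+\beta-1}.\]
For any fixed $\varepsilon>0$ one has $n^{-\varepsilon}<1/2$ and $3/2<n^{\varepsilon}$ as soon as $n$ is large enough, hence a.a.s.
\[n^{(\alpha+\beta-1)-\varepsilon}\leq |A_n\cap B_n|\leq n^{(\alpha+\beta-1)+\varepsilon}.\]
Proposition \ref{Concentration} then yields that $\bs A\cap \bs B$ is densable with density $\alpha+\beta-1$, as required.

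In the case $\alpha+\beta<1$, Lemma \ref{concentration uniform} (i) gives $\esp{|A_n\cap B_n|}\leq n^{\alpha+\beta-1}\tendvers 0$. Markov's inequality then yields
\[\Pr(A_n\cap B_n\neq \vide)=\Pr(|A_n\cap B_n|\geq 1)\leq \esp{|A_n\cap B_n|}\tendvers 0,\]
so a.a.s. $A_n\cap B_n=\vide$, which means $\dens(\bs A\cap \bs B)=-\infty$. I do not foresee any serious obstacle: the only delicate point is the boundary case $\alpha=1$ or $\beta=1$, where the variance estimate in Lemma \ref{concentration uniform} (ii) degenerates; but then one of the random subsets equals $E_n$ deterministically and the intersection trivially reduces to the other, a remark already built into Lemma \ref{concentration uniform} (iii), so it has no impact on the conclusion.
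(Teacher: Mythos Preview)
Your proof is correct and follows essentially the same route as the paper: Markov's inequality combined with Lemma~\ref{concentration uniform}(i) in the subcritical case, and Lemma~\ref{concentration uniform}(iii) with $c=1/2$ followed by Proposition~\ref{Concentration} in the supercritical case. The extra remark on the boundary $\alpha=1$ or $\beta=1$ is not in the paper's proof but is harmless and already absorbed into Lemma~\ref{concentration uniform}(iii) as you note.
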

\begin{proof}\quad
    \begin{enumerate}[(i)]
        \item If $\alpha + \beta <1$, then by Markov's inequality and Lemma \ref{concentration uniform} $(i)$:
        \[\Pr(|A_n\cap B_n|\geq 1)\leq \mathbb{E}(|A_n\cap B_n|)\tendvers 0,\]
        which implies a.a.s. $A_n\cap B_n = \vide$ and $\dens(A\cap B) = -\infty$.
        \item If $\alpha + \beta >1$, by Lemma \ref{concentration uniform} $(iii)$ (with $c = 1/2$) a.a.s.
        \[\left||A_n\cap B_n|-n^{\alpha+\beta-1}\right|\leq \frac{1}{2}n^{\alpha+\beta-1},\]
        so for all $\varepsilon>0$ a.a.s.
        \[n^{\alpha+\beta-1-\varepsilon} \leq |A_n\cap B_n|\leq n^{\alpha+\beta-1+\varepsilon}.\]
        Hence by Proposition \ref{Concentration} $\bs A \cap \bs B$ is densable with density $\alpha + \beta -1$.
    \end{enumerate}
\end{proof}

The cardinality of $A_n\cap B_n$ is close to $n^{\alpha+\beta-1}$ with high probability, but not always. If $\alpha\neq 1$ and $\beta\neq 1$, then for $n$ large enough $\flr{n^\alpha} + \flr{n^\beta} < n$, so $\Pr(A_n\cap B_n = \vide)\neq 0$.

Which means that $\bs A\cap \bs B$ is not a sequence of uniform random subsets, so the class of sequences of uniform random subsets is \textit{not} closed under intersection.

\section{The general model: densable and permutation invariant}
\subsection{Densable sequences of permutation invariant random subsets}

Let $E$ be a finite set with cardinality $|E| = n$. Denote $\mathcal{S}(E)$ as the group of permutations of $E$. The action of $\mathcal{S}(E)$ on $E$ can be extended on $\mathcal{P}(E)$, defined by $\sigma(\{x_1,\dots,x_k\}) := \{\sigma(x_1),\dots, \sigma(x_k)\}$.

Note that this action has $(n+1)$ orbits of the form $\{a\in \mathcal{S}(E)\mid |a| = k\}$ for $k \in \{ 0,\dots,n\}$. Moreover, the action commutes with set theoretic operations: $\sigma(E\backslash a) = E\backslash \sigma(a)$, $\sigma(a\cap b) = \sigma(a)\cap\sigma(b)$ and $\sigma(a\cup b) = \sigma(a)\cup\sigma(b)$.

\begin{defi}[Permutation invariant random subsets]\quad
Let $A$ be a random subset of $E$. It is permutation invariant if its law is invariant by the permutations of $E$. i.e.
    \[\forall a\in \mathcal{P}(E)\;\forall \sigma\in\mathcal{S}(E)\quad \Pr(A = a) = \Pr(A = \sigma(a)).\]
\end{defi}

Equivalently, subsets of $E$ of the same cardinality are equiprobable. There exists real numbers $p_0,\dots,p_n\in [0,1]$ satisfying 
\[\sum_{k=0}^n\binom{n}{k} p_k= 1\] such that
    \[\forall a\in\mathcal{P}(E) \quad |a|= k\Rightarrow\Pr(A = a) = p_k.\]

By definition, uniform random subsets and Bernoulli random subsets are permutation invariant. The advantage of such class of random subsets is that it is closed under set theoretic operations, especially under intersections.

\begin{lem} [Closed under set operations]\label{closed} Let $E$ be a finite set. The class of permutation invariant random subsets of $E$ is closed under set theoretic operations (union, complement and intersection).
\end{lem}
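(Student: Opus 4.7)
The plan is to verify the defining condition $\Pr(X = a) = \Pr(X = \sigma(a))$ directly for each of the three operations, leveraging only the equivariance formulas $\sigma(E\backslash a) = E\backslash \sigma(a)$, $\sigma(a \cap b) = \sigma(a) \cap \sigma(b)$ and $\sigma(a \cup b) = \sigma(a) \cup \sigma(b)$ already recorded just above the lemma.

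The complement case is one-variable and straightforward: given a permutation invariant random subset $A$ and arbitrary $a \in \mathcal{P}(E)$, $\sigma \in \mathcal{S}(E)$, I would chain
\[ \Pr(E\backslash A = a) = \Pr(A = E\backslash a) = \Pr(A = \sigma(E\backslash a)) = \Pr(A = E\backslash \sigma(a)) = \Pr(E\backslash A = \sigma(a)), \]
using permutation invariance of $A$ in the middle equality and equivariance of complement in the next one.

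For the intersection (and, symmetrically, the union) of two independent permutation invariant random subsets $A$ and $B$, my plan is first to upgrade marginal invariance to \emph{joint} invariance: independence gives
\[ \Pr(A = a,\, B = b) = \Pr(A = a)\,\Pr(B = b), \]
and applying marginal invariance to each factor yields $\Pr(A = a,\, B = b) = \Pr(A = \sigma(a),\, B = \sigma(b))$ for every $\sigma \in \mathcal{S}(E)$. Then I would decompose
\[ \Pr(A \cap B = c) = \sum_{\substack{a,b \in \mathcal{P}(E) \\ a \cap b = c}} \Pr(A = a,\, B = b) \]
and reindex through the bijection $(a, b) \mapsto (\sigma(a), \sigma(b))$ of $\mathcal{P}(E)^2$. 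Equivariance of $\cap$ sends $\{(a, b) : a \cap b = c\}$ bijectively onto $\{(a', b') : a' \cap b' = \sigma(c)\}$, so after the change of variables the sum becomes $\Pr(A \cap B = \sigma(c))$. The union case follows identically with $\cup$ in place of $\cap$.

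I expect no genuine obstacle: everything reduces to a change of variables in a finite sum. The one point deserving attention is that independence is what lets marginal invariance be promoted to joint invariance, so this hypothesis on the two random subsets is essential (it is implicit in the applications, for instance in the upcoming Lemma \ref{concentration uniform}).
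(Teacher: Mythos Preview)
Your proposal is correct and matches the paper's proof almost verbatim for complement and intersection: the same chain of equalities for complement, and the same decomposition-plus-reindexing via the bijection $(a,b)\mapsto(\sigma(a),\sigma(b))$ for intersection, using independence to factor the joint law. The only cosmetic difference is that you treat union by repeating the intersection argument with $\cup$, whereas the paper instead invokes De Morgan, writing $A\cup B = E\backslash\big((E\backslash A)\cap(E\backslash B)\big)$ and appealing to the already-established complement and intersection cases; both routes are equally short.
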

\begin{proof}\quad
\begin{enumerate}[$(i)$]
    \item (Complement) Let $A$ be a permutation invariant random subset. Let $a\in \mathcal{P}(E)$ and $\sigma\in\mathcal{S}(E)$. Then
    \[\begin{split}
        \Pr(E\backslash A = a) & = \Pr(A = E\backslash a) = \Pr(A = \sigma(E\backslash a))\\
        & = \Pr(A = E\backslash \sigma(a)) = \Pr(E\backslash A = \sigma(a)).
    \end{split}\]
    \item (Intersection) Let $A,B$ be independent permutation invariant random subsets. Then for $\sigma\in\mathcal{S}(E)$
    \[
    \begin{split}\Pr(A\cap B = c) & = \sum_{a,b\in\mathcal{P}(E);a\cap b = c}\Pr(A = a)\Pr(B = b) \\
    & = \sum_{a,b\in\mathcal{P}(E);\sigma(a)\cap \sigma(b) = \sigma(c)}\Pr(A = \sigma(a))\Pr(B = \sigma(b))\\
    & = \sum_{a',b'\in\mathcal{P}(E);a'\cap b' = \sigma(c)}\Pr(A = a')\Pr(B = b') \quad (\textup{by substitution})\\
    & = \Pr(A\cap B = \sigma(c)).
    \end{split}\]
    \item (Union) Let $A,B$ be independent permutation invariant random subsets. Then $A\cup B = E\backslash((E\backslash A)\cap (E\backslash B))$. So $A\cup B$ is permutation invariant.
\end{enumerate}
\end{proof}

We shall express the expectation and the variance of the random variable $|A|$ by $\Pr(x\in A)$ and $\Pr(x\in A,y\in A)$ where $x,y$ are distinct elements in $E$.
\begin{lem} \label{E and Var} Let $A$ be a permutation invariant random subset of $E$. Let $x,y$ be distinct elements in $E$. Then
\begin{enumerate}[$(i)$]
    \item $\mathbb{E}(|A|) =n \Pr(x\in A)$,
    \item $\Var(|A|) = \mathbb{E}(|A|) +n(n-1)\Pr(x\in A,y\in A)-\mathbb{E}(|A|)^2$.
\end{enumerate}
\end{lem}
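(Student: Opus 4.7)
The plan is to express $|A|$ as a sum of indicator random variables over the elements of $E$ and exploit permutation invariance to conclude that every one-element marginal (and every two-element marginal) has the same probability. Concretely, I will write
\[
|A| = \sum_{z\in E} \mathbbm{1}_{z\in A},
\]
and observe that for any $z,w\in E$ the events $\{z\in A\}$ and $\{w\in A\}$ are related by the transposition $(z\;w)\in \mathcal{S}(E)$, so permutation invariance gives $\Pr(z\in A)=\Pr(w\in A)$. Similarly, for any two pairs of distinct elements $(z,w)$ and $(z',w')$ there is a permutation sending one pair to the other, hence $\Pr(z\in A,\,w\in A)=\Pr(z'\in A,\,w'\in A)$.

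For part $(i)$, linearity of expectation together with the first observation immediately yields
\[
\mathbb{E}(|A|)=\sum_{z\in E}\Pr(z\in A)=n\,\Pr(x\in A).
\]

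For part $(ii)$, I would compute $\mathbb{E}(|A|^2)$ by expanding the square of the indicator sum and separating the diagonal from the off-diagonal terms:
\[
\mathbb{E}(|A|^2)=\sum_{z\in E}\Pr(z\in A)+\sum_{\substack{z,w\in E\\ z\neq w}}\Pr(z\in A,\,w\in A).
\]
The diagonal sum equals $\mathbb{E}(|A|)$ by part $(i)$, and by the second observation above the off-diagonal sum has all $n(n-1)$ terms equal to $\Pr(x\in A,\,y\in A)$. Subtracting $\mathbb{E}(|A|)^2$ then gives the claimed formula for $\Var(|A|)$.

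There is essentially no obstacle here; the only thing worth being careful about is stating precisely that permutation invariance implies equality of the one- and two-point marginals, which is a direct consequence of the definition (given any $z,w$ distinct, pick any $\sigma\in\mathcal{S}(E)$ with $\sigma(x)=z$, $\sigma(y)=w$, and note that $\{z\in A,w\in A\}=\sigma(\{x\in A, y\in A\})$ as events when we identify $A$ with $\sigma^{-1}(A)$ using invariance of the law).
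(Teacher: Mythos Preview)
Your proof is correct and follows essentially the same approach as the paper: write $|A|=\sum_{z\in E}\mathbbm{1}_{z\in A}$, use permutation invariance to equate all one-point (resp.\ two-point) probabilities, then compute $\mathbb{E}(|A|)$ by linearity and $\mathbb{E}(|A|^2)$ by splitting the square into diagonal and off-diagonal terms. Your explicit justification of the equality of marginals via a chosen permutation is slightly more detailed than the paper's, but the argument is otherwise identical.
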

\begin{proof}\quad
\begin{enumerate}[$(i)$]
    \item By definition the probability $\Pr(z\in A)$ does not depend on the choice of element $z\in E$. So \[\mathbb{E}(|A|) = \mathbb{E}\left(\sum_{z\in E}\mathbbm{1}_{z\in A}\right) = \sum_{z\in E}\Pr(z\in A) = n \Pr(x\in A).\]
    \item By the same argument, the probability $\Pr(z\in A, w\in A)$ does not depend on the choice of pair of distinct elements $(z,w)$ in $E$. So
    \[\begin{split}\mathbb{E}(|A|^2) & = \mathbb{E}\left[\left(\sum_{z\in E}\mathbbm{1}_{z\in A}\right)^2\right]\\
    & = \sum_{z\in E}\Pr(z\in A) + \sum_{(z,w)\in E^2;z\neq w} \Pr(z\in A, w\in A)\\
    & = \mathbb{E}(|A|) +n(n-1)\Pr(x\in A,y\in A). \end{split}\]
\end{enumerate}
\end{proof}

A permutation invariant random subset can be decomposed into uniform random subsets.

\begin{prop} [Decomposition into uniform random subsets]\label{decomposition} Let $A$ be a permutation invariant random subset of $E$. 
\begin{enumerate} [(i)]
    \item If $\Pr(|A| = k)\neq 0$, then the random subset $A$ under the condition $\{|A| = k\}$ is a uniform random subset on all subsets of $E$ of cardinality $k$.
    \item Let $Q$ be an event described by $A$ (for example, $Q = \{x\in A\}$). Denote $\mathbb{N}_A = \{k\in \mathbb{N}\,|\,\Pr(|A|=k)\neq 0\}$, then
\[\Pr(Q) = \sum_{k\in \mathbb{N}_A}\Pr(Q\mid|A| = k)\Pr(|A|=k).\]
\end{enumerate}
\end{prop}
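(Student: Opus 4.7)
The statement has two parts, and my plan is to handle them in order, with part (i) being the substantive content and part (ii) a direct appeal to the law of total probability.

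For part (i), I would start from the observation already made right after the definition of permutation invariance: there exist numbers $p_0,\dots,p_n \in [0,1]$ such that $\Pr(A=a) = p_{|a|}$ for every $a \in \mathcal{P}(E)$. Fix $k$ with $\Pr(|A|=k) \neq 0$, equivalently $p_k \neq 0$. For any $a \subset E$ with $|a|=k$, the definition of conditional probability gives $\Pr(A=a \mid |A|=k) = p_k / \Pr(|A|=k)$, a value depending only on $k$. Summing over the $\binom{n}{k}$ subsets $a$ of size $k$ must yield $1$, forcing $p_k / \Pr(|A|=k) = \binom{n}{k}^{-1}$. Thus
\[\Pr(A=a \mid |A|=k) = \begin{cases} \binom{n}{k}^{-1} & \text{if } |a|=k \\ 0 & \text{otherwise,} \end{cases}\]
which is precisely the law of the uniform random subset on size-$k$ subsets.

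For part (ii), the events $\{|A|=k\}$ for $k \in \{0,1,\dots,n\}$ partition the sample space, and outside $\mathbb{N}_A$ these events have probability zero. Applying the law of total probability to the event $Q$ (which, being described by $A$, is measurable with respect to $A$) yields
\[\Pr(Q) = \sum_{k=0}^{n} \Pr(Q \mid |A|=k)\Pr(|A|=k) = \sum_{k \in \mathbb{N}_A} \Pr(Q \mid |A|=k)\Pr(|A|=k),\]
where in the second equality we drop the zero terms (and the conditional probabilities for $k \notin \mathbb{N}_A$ are undefined but harmlessly multiplied by zero).

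There is no real obstacle here: part (i) reduces to a symmetry-plus-normalization argument, and part (ii) is pure bookkeeping. The only mildly delicate point is justifying that the equiprobability of size-$k$ subsets forces the uniform conditional distribution, which follows immediately once one notes that the conditional probability is a function of $|a|$ alone.
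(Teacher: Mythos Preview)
Your proposal is correct and follows essentially the same approach as the paper: for (i) you use permutation invariance to see that $\Pr(A=a\mid |A|=k)$ depends only on $|a|$ and then normalize over the $\binom{n}{k}$ subsets, while the paper equivalently writes $\Pr(|A|=k)=\binom{n}{k}\Pr(A=a)$ directly and divides; for (ii) both simply invoke the law of total probability.
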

\begin{proof}
    Suppose that $\Pr(|A| = k)\neq 0$. Let $a\subset E$ of cardinal $k$. As $A$ is permutation invariant,
    \[\Pr(|A| = k) = \binom{n}{k}\Pr(A = a).\]
    Hence 
    \[\Pr\left(A = a\,\middle|\,|A| = k\right) = \frac{\Pr(A = a)}{\Pr(|A| = k)} = \binom{n}{k}\moinsun.\]
    If $|a|\neq k$ then $\Pr\left(A = a\,\middle|\,|A| = k\right) = 0$.
    
    The second assertion is the formula of total probability.
\end{proof}

\begin{defi} Let $\bs A = (A_n)$ be a sequence of random subsets of $\bs E = (E_n)$. It is a sequence of permutation invariant random subset if $A_n$ is a permutation invariant random subset of $E_n$ for all $n$. 
\end{defi}
\begin{nota} Let $\bs E = (E_n)$ be a sequence of finite sets. Denote $\mathcal{D}(\bs E)$ the class of densable sequences of permutation invariant random subsets of $\bs E$.
\end{nota}

\begin{ex}\quad
\begin{enumerate}
    \item Sequences of Bernoulli random subsets of $\bs E$ with density $d\neq 0 $ are in the class $\mathcal{D}(\bs E)$.
    \item Sequences of uniform random subsets of $\bs E$ are in the class $\mathcal{D}(\bs E)$. 
    \item Let $\bs A,\bs B$ be independent sequences of uniform random subsets. By Lemma \ref{closed}, the sequence $\bs A\cap \bs B$ is permutation invariant. By Proposition \ref{intersection uniform}, if $\dens\bs A + \dens\bs B\neq 1$, then $\bs A\cap \bs B$ is densable. In this case the sequence $\bs A\cap \bs B$ is in the class $\mathcal{D}(\bs E)$.
\end{enumerate}
\end{ex}

Except for some special cases, the class $\mathcal{D}(\bs E)$ is closed under set theoretic operations:

\begin{prop} Let $\bs A, \bs B\in \mathcal{D}(\bs E)$ with densities $\alpha,\beta$. Then the union $\bs A\cup\bs B$ is in $\mathcal{D}(\bs E)$ and $\dens(\bs A\cup\bs B) = \max(\alpha,\beta)$.
\end{prop}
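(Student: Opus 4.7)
The plan is to verify the two defining properties of $\mathcal{D}(\bs E)$ in turn for the sequence $\bs A \cup \bs B$: permutation invariance on the one hand, densability with density $\max(\alpha,\beta)$ on the other.

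For permutation invariance, I would (assuming, as is standard in this setting, that $\bs A$ and $\bs B$ are independent) invoke Lemma \ref{closed} termwise: each $A_n \cup B_n$ is permutation invariant because $A_n$ and $B_n$ are independent permutation invariant random subsets of $E_n$. So $\bs A \cup \bs B$ is a sequence of permutation invariant random subsets, and only the density computation remains.

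For the density, the strategy is to use the elementary chain of bounds
\[|A_n|\;\leq\;|A_n \cup B_n|\;\leq\;|A_n| + |B_n|\]
to sandwich $|A_n \cup B_n|$ between powers of $|E_n|$. Assume without loss of generality $\alpha \geq \beta$, and first suppose $\alpha \geq 0$. By Proposition \ref{Concentration} applied to each of $\bs A$ and $\bs B$, for any $\varepsilon > 0$ we have a.a.s.
\[|E_n|^{\alpha-\varepsilon}\leq|A_n|\leq|E_n|^{\alpha+\varepsilon},\qquad |B_n|\leq|E_n|^{\alpha+\varepsilon},\]
the upper bound on $|B_n|$ using $\beta \leq \alpha$ (or, if $\beta = -\infty$, the fact that a.a.s. $B_n = \vide$). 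Combining the two gives a.a.s. $|E_n|^{\alpha-\varepsilon} \leq |A_n \cup B_n| \leq 2|E_n|^{\alpha+\varepsilon}$. Since $|E_n| \to \infty$, we have $2 \leq |E_n|^\varepsilon$ for $n$ large, so the upper bound becomes $|E_n|^{\alpha+2\varepsilon}$. As $\varepsilon > 0$ was arbitrary, Proposition \ref{Concentration} then yields densability with density $\alpha = \max(\alpha,\beta)$.

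The one remaining case is $\alpha = \beta = -\infty$, in which a.a.s. both $A_n$ and $B_n$ are empty, whence $A_n \cup B_n = \vide$ a.a.s. and $\dens(\bs A \cup \bs B) = -\infty = \max(\alpha,\beta)$. There is no real obstacle here; the proof is much softer than the intersection theorem. The only mild subtlety is absorbing the constant factor $2$ in the upper bound into the $\varepsilon$ slack, which is taken care of by $|E_n| \to \infty$.
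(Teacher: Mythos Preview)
Your proof is correct and follows essentially the same approach as the paper's: invoke Lemma~\ref{closed} for permutation invariance, then sandwich $|A_n\cup B_n|$ between $|A_n|$ and $|A_n|+|B_n|$ using the densability estimates and absorb the factor $2$ into the $\varepsilon$ slack. Your treatment is in fact slightly more careful than the paper's in two respects: you explicitly flag the independence hypothesis needed for Lemma~\ref{closed} (which the paper leaves implicit in the proposition statement), and you handle the case $\alpha=\beta=-\infty$ separately rather than sweeping it under a brief remark.
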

\begin{proof}
    By Lemma \ref{closed} the sequence of random subset $\bs A\cup\bs B$ is permutation invariant. The cases $\alpha=0$ or $\beta=0$ can be easily shown. Without loss of generality, assume that $\alpha \geq \beta \geq 0$.
    
    Let $\varepsilon>0$. By densabilities of $\bs A$ and $\bs B$, a.a.s.
    \[n^{\alpha-\varepsilon/2}\leq |A_n|\leq n^{\alpha+\varepsilon/2},\]
    \[n^{\beta-\varepsilon/2}\leq |B_n|\leq n^{\beta+\varepsilon/2}.\]
    Thus a.a.s.
    \[n^{\alpha-\varepsilon}\leq |A_n|\leq |A_n\cup B_n|\leq n^{\alpha+\varepsilon/2}+n^{\beta+\varepsilon/2}\leq 2n^{\alpha+\varepsilon/2}\leq n^{\alpha+\varepsilon}.\]
\end{proof}

\begin{prop}\label{complement}
    Let $\bs A\in \mathcal{D}(\bs E)$ with density $\alpha < 1$. Then the complement $\bs E\backslash \bs A$ is in $\mathcal{D}(\bs E)$ and $\dens(\bs E\backslash \bs A) = 1$.
\end{prop}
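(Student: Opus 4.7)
The statement has two parts: permutation invariance of $\bs E \setminus \bs A$, and densability with density $1$. The first part is immediate from Lemma \ref{closed}, which says that the class of permutation invariant random subsets is closed under complementation; so $E_n \setminus A_n$ is permutation invariant for every $n$, and therefore $\bs E \setminus \bs A$ is a sequence of permutation invariant random subsets.

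For densability, my plan is to apply the characterization in Proposition \ref{Concentration}: I need to show that for every $\varepsilon>0$, a.a.s.\ $|E_n|^{1-\varepsilon} \leq |E_n \setminus A_n| \leq |E_n|$. The upper bound is trivial since $|E_n \setminus A_n| \leq |E_n| = |E_n|^{1}$. For the lower bound, fix $\varepsilon>0$ and choose $\delta>0$ small enough that $\alpha + \delta < 1$. Since $\bs A$ is densable with density $\alpha$, Proposition \ref{Concentration} yields that a.a.s.\ $|A_n| \leq |E_n|^{\alpha+\delta}$. Consequently,
\[
|E_n \setminus A_n| \;=\; |E_n| - |A_n| \;\geq\; |E_n| - |E_n|^{\alpha+\delta}.
\]
Since $\alpha + \delta < 1$ and $|E_n|\to\infty$, the right-hand side is equivalent to $|E_n|$, and in particular exceeds $|E_n|^{1-\varepsilon}$ for all $n$ large enough. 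This gives the desired lower bound a.a.s., completing the verification that $\dens_{E_n}(E_n \setminus A_n) \to 1$ in probability, which by Proposition \ref{Concentration} means $\bs E \setminus \bs A$ is densable with density $1$.

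There is no real obstacle here: the whole argument is a direct application of the concentration characterization of densability, combined with the arithmetic fact that subtracting a sub-polynomial quantity $|E_n|^{\alpha+\delta}$ from $|E_n|$ leaves something still of order $|E_n|$. The hypothesis $\alpha < 1$ is used precisely to ensure that we can pick $\delta$ with $\alpha+\delta<1$, so that $|E_n|^{\alpha+\delta}$ is negligible compared to $|E_n|$; without this, one could not conclude. Note also that the result covers the case $\alpha = -\infty$ (i.e., $A_n = \vide$ a.a.s.), for which $E_n \setminus A_n = E_n$ a.a.s., trivially giving density $1$.
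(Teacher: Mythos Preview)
Your proof is correct and follows essentially the same approach as the paper: invoke Lemma \ref{closed} for permutation invariance, then use the densability of $\bs A$ to bound $|A_n|$ above by $|E_n|^{\alpha+\delta}$ with $\alpha+\delta<1$, and conclude that $|E_n|-|A_n|\geq |E_n|^{1-\varepsilon}$ for large $n$. The only cosmetic difference is that the paper uses a single parameter $\varepsilon<(1-\alpha)/2$ in both roles, whereas you separate $\varepsilon$ and $\delta$; your explicit remark on the case $\alpha=-\infty$ is a nice addition.
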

\begin{proof}
    Again by Lemma \ref{closed} the sequence of random subset $\bs E\backslash \bs A$ is permutation invariant.
    
    Let $0<\varepsilon<(1-\alpha)/2$. By densablility of $\bs A$, a.a.s.
    \[|A_n|\leq n^{\alpha+\varepsilon}.\]
    As $n^{\alpha+\varepsilon}+n^{1-\varepsilon}\leq n$ for $n$ large enough, a.a.s.
    \[|E_n\backslash A_n| \geq n-n^{\alpha+\varepsilon}\geq n^{1-\varepsilon}.\]
\end{proof}

\subsection{The intersection formula}

In this subsection we shall prove the intersection formula for the class of densable sequences of permutation invariant random subsets.

\begin{thm}[The intersection formula]\label{intersection} Let $\bs A, \bs B$ be independent sequences in $\mathcal{D}(\bs E)$ with densities $\alpha,\beta$. If $\alpha + \beta \neq 1$, then the sequence $\bs A\cap \bs B$ is in $\mathcal{D}(\bs E)$ and 
    \[\dens (\bs A\cap \bs B) = \begin{cases}
        \alpha + \beta - 1 & \textup{ if } \alpha + \beta > 1\\
        -\infty & \textup{ if } \alpha + \beta < 1.
        \end{cases}\]
\end{thm}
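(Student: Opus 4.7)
The plan is to reduce the theorem to the uniform density case (Lemma~\ref{concentration uniform}) by conditioning on $|A_n|$ and $|B_n|$. Permutation invariance of $\bs A\cap \bs B$ is immediate from Lemma~\ref{closed}, so only the density has to be identified. Fix $\varepsilon>0$; by Proposition~\ref{Concentration} the event
\[\Omega_n:=\bigl\{n^{\alpha-\varepsilon}\leq |A_n|\leq n^{\alpha+\varepsilon}\bigr\}\cap\bigl\{n^{\beta-\varepsilon}\leq |B_n|\leq n^{\beta+\varepsilon}\bigr\}\]
satisfies $\Pr(\Omega_n)\to 1$. By Proposition~\ref{decomposition}, conditionally on $|A_n|=k$ and $|B_n|=l$ the subsets $A_n$ and $B_n$ are independent uniform random subsets of $E_n$ of the stated cardinalities. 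The calculations from the proof of Lemma~\ref{concentration uniform} then carry over verbatim, yielding
\[\esp{|A_n\cap B_n|\mid |A_n|=k,|B_n|=l}=\frac{kl}{n}\quad\text{and}\quad \Var(|A_n\cap B_n|\mid |A_n|=k,|B_n|=l)\leq \frac{3kl}{n},\]
the variance bound following from $\Var=kl(n-k)(n-l)/[n^2(n-1)]\leq 2kl/(n-1)$ and being valid for all $k,l\leq n$ as soon as $n\geq 3$.

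For the case $\alpha+\beta<1$, choose $\varepsilon$ so that $\alpha+\beta-1+2\varepsilon<0$. Conditional Markov then gives, uniformly for $(k,l)$ compatible with $\Omega_n$,
\[\PrCond{|A_n\cap B_n|\geq 1}{|A_n|=k,|B_n|=l}\leq \frac{kl}{n}\leq n^{\alpha+\beta-1+2\varepsilon}\longrightarrow 0.\]
Summing via the total probability formula of Proposition~\ref{decomposition} (ii) over admissible $(k,l)$, and bounding the rest by $\Pr(\Omega_n^c)\to 0$, yields $\Pr(A_n\cap B_n\neq \vide)\to 0$, that is, $\dens(\bs A\cap \bs B)=-\infty$.

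For the case $\alpha+\beta>1$, fix a small $\varepsilon>0$ with $\alpha+\beta-1-2\varepsilon>0$. On $\Omega_n$ the conditional mean $kl/n$ lies in $[n^{\alpha+\beta-1-2\varepsilon},n^{\alpha+\beta-1+2\varepsilon}]$ and therefore tends to infinity. Conditional Chebyshev with the variance bound above gives, uniformly in admissible $(k,l)$,
\[\PrCond{\bigl||A_n\cap B_n|-kl/n\bigr|>\tfrac{1}{2}\,kl/n}{|A_n|=k,|B_n|=l}\leq \frac{12\,n}{kl}\leq 12\,n^{-(\alpha+\beta-1-2\varepsilon)}\longrightarrow 0.\]
Summing as before and combining with $\Pr(\Omega_n)\to 1$, we conclude that a.a.s.\ $n^{\alpha+\beta-1-3\varepsilon}\leq |A_n\cap B_n|\leq n^{\alpha+\beta-1+3\varepsilon}$; since $\varepsilon$ is arbitrary, Proposition~\ref{Concentration} delivers $\dens(\bs A\cap \bs B)=\alpha+\beta-1$.

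The main technical hurdle is the uniformity of the conditional concentration estimate over the admissible window of cardinalities $(k,l)$; the hypothesis $\alpha+\beta>1$ is precisely what forces $kl/n\to\infty$ uniformly on $\Omega_n$, making the Chebyshev error tend to zero independently of the particular conditioning. A secondary point to check is that the variance bound in Lemma~\ref{concentration uniform} (ii) was proved under $\alpha,\beta<1$, but the explicit formula $\Var=kl(n-k)(n-l)/[n^2(n-1)]$ gives the crude bound $3kl/n$ for all $k,l\leq n$, so no separate treatment of the boundary densities $\alpha=1$ or $\beta=1$ is needed.
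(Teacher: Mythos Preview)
Your proof is correct and follows essentially the same approach as the paper: condition on the cardinalities $|A_n|,|B_n|$ to reduce to the uniform model via Proposition~\ref{decomposition}, then apply Markov (case $\alpha+\beta<1$) or Chebyshev (case $\alpha+\beta>1$) uniformly over the high-probability window of cardinalities, and integrate via total probability. The paper merely packages the uniform Chebyshev step into a separate auxiliary lemma (Lemma~\ref{aux intersection}) with $\varepsilon/3$ bookkeeping, whereas you carry it out inline; your remark that the crude variance bound $kl(n-k)(n-l)/[n^2(n-1)]\leq 3kl/n$ holds for all $k,l\leq n$ is a clean way to avoid the separate treatment of the edge cases $\alpha=1$ or $\beta=1$.
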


\begin{lem}\label{aux intersection} Let $\alpha,\beta\in [0,1]$ such that $\alpha + \beta > 1$. Let $0<\varepsilon<\alpha+\beta-1$. Let $\bs A, \bs B$ independent sequences of uniform random subsets of $\bs E$ with densities $\alpha',\beta'$ with $\alpha'\in [\alpha-\varepsilon/3,  \alpha+\varepsilon/3]$ and $\beta'\in [\beta-\varepsilon/3, \beta+\varepsilon/3]$. If $n\geq \max\left\{2^{3/\varepsilon},8^{1/(\alpha+\beta-1-\varepsilon)}\right\}$, then:
\[\Pr\left(n^{\alpha+\beta-1-\varepsilon}\leq |A_n\cap B_n|\leq n^{\alpha+\beta-1+\varepsilon}\right)\geq 1-\frac{48}{n^{\alpha+\beta-1-\varepsilon}}\tendvers 1.\]
\end{lem}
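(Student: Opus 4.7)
\medskip

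\textbf{Plan of proof.} The plan is to apply the concentration result of Lemma~\ref{concentration uniform}~$(iii)$ with the constant $c = 1/2$ to the pair $(\bs A,\bs B)$ of densities $\alpha',\beta'$, and then to translate the exponent $\alpha'+\beta'-1$ (which is close to, but not exactly, $\alpha+\beta-1$) into the target exponents $\alpha+\beta-1\pm\varepsilon$ using the hypothesis $n\geq 2^{3/\varepsilon}$. The three ingredients I will need are: (a) the comparison $\alpha'+\beta'-1\in[\alpha+\beta-1-2\varepsilon/3,\alpha+\beta-1+2\varepsilon/3]$, which follows from $\alpha'\in[\alpha-\varepsilon/3,\alpha+\varepsilon/3]$ and $\beta'\in[\beta-\varepsilon/3,\beta+\varepsilon/3]$; (b) the fact that since $\alpha'+\beta'-1\geq \alpha+\beta-1-\varepsilon$, the threshold $(4/c)^{1/(\alpha'+\beta'-1)} = 8^{1/(\alpha'+\beta'-1)}$ appearing in Lemma~\ref{concentration uniform}~$(iii)$ is at most $8^{1/(\alpha+\beta-1-\varepsilon)}$, hence the hypothesis of that lemma is indeed guaranteed by the assumption on $n$; (c) the inequality $n^{\varepsilon/3}\geq 2$, which holds precisely because $n\geq 2^{3/\varepsilon}$.

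\textbf{Carrying out the plan.} First I apply Lemma~\ref{concentration uniform}~$(iii)$ with $c=1/2$ to get, for $n$ large enough (guaranteed by point (b) above),
\[
\Pr\Bigl(\bigl||A_n\cap B_n| - n^{\alpha'+\beta'-1}\bigr| > \tfrac{1}{2}\, n^{\alpha'+\beta'-1}\Bigr) \;\leq\; \frac{48}{n^{\alpha'+\beta'-1}}.
\]
On the complement event, $\tfrac{1}{2} n^{\alpha'+\beta'-1}\leq |A_n\cap B_n|\leq \tfrac{3}{2} n^{\alpha'+\beta'-1}$. Using point (a), this sandwich gives
\[
\tfrac{1}{2}\,n^{\alpha+\beta-1-2\varepsilon/3} \;\leq\; |A_n\cap B_n| \;\leq\; \tfrac{3}{2}\,n^{\alpha+\beta-1+2\varepsilon/3}.
\]
By point (c), $n^{\varepsilon/3}\geq 2\geq 3/2$, so $\tfrac{1}{2}n^{\alpha+\beta-1-2\varepsilon/3} \geq n^{\alpha+\beta-1-\varepsilon}$ and $\tfrac{3}{2}n^{\alpha+\beta-1+2\varepsilon/3} \leq n^{\alpha+\beta-1+\varepsilon}$, which yields the target two-sided bound. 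Finally, since $\alpha'+\beta'-1\geq \alpha+\beta-1-\varepsilon$, one has $\frac{48}{n^{\alpha'+\beta'-1}} \leq \frac{48}{n^{\alpha+\beta-1-\varepsilon}}$, giving the announced probability estimate.

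\textbf{Main obstacle.} There is no conceptual difficulty: the lemma is a bookkeeping statement whose role is to quantify how uncertainty on the densities $\alpha',\beta'$ propagates into uncertainty on $\dens(\bs A\cap \bs B)$. The only subtle point is to distribute the error budget $\varepsilon$ correctly, namely to spend $2\varepsilon/3$ on the mismatch between $\alpha'+\beta'-1$ and $\alpha+\beta-1$ and the remaining $\varepsilon/3$ on absorbing the multiplicative constant $3/2$ coming from the concentration of $|A_n\cap B_n|$ around its typical value $n^{\alpha'+\beta'-1}$. This is exactly what the threshold $n\geq 2^{3/\varepsilon}$ is designed to do, and what the threshold $n\geq 8^{1/(\alpha+\beta-1-\varepsilon)}$ guarantees the applicability of Lemma~\ref{concentration uniform}~$(iii)$.
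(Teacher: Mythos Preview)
Your proof is correct and follows essentially the same approach as the paper's: apply Lemma~\ref{concentration uniform}~$(iii)$ with $c=1/2$ at the perturbed densities $\alpha',\beta'$, then use the budget split $2\varepsilon/3 + \varepsilon/3$ (the first part for the mismatch $|\alpha'+\beta'-1-(\alpha+\beta-1)|\le 2\varepsilon/3$, the second for absorbing the constants $1/2$ and $3/2$ via $n^{\varepsilon/3}\ge 2$) and the monotonicity $8^{1/(\alpha'+\beta'-1)}\le 8^{1/(\alpha+\beta-1-\varepsilon)}$ to transfer both the concentration window and the error term $48/n^{\alpha'+\beta'-1}$ to the target exponents.
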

\begin{proof}
    By hypothesis $\alpha'+\beta'-1\geq\alpha+\beta-2\varepsilon/3-1>0$. Apply Lemma \ref{concentration uniform} $(iii)$ with $c = \frac{1}{2}$, for $n\geq 8^{1/(\alpha+\beta-1-\varepsilon)}\geq 8^{1/(\alpha'+\beta'-1)}$:
    \[\Pr\left(\left||A_n\cap B_n|-n^{\alpha'+\beta'-1}\right|\geq \frac{1}{2}n^{\alpha'+\beta'-1}\right)\leq  \frac{48}{n^{\alpha'+\beta'-1}}.\]
    This can be rewrite as 
    \[\Pr\left(\frac{1}{2}n^{\alpha' + \beta' -1} < |A_n\cap B_n| < \frac{3}{2}n^{\alpha' + \beta' -1} \right) > 1-\frac{48}{n^{\alpha'+\beta'-1}}.\]
    Again by hypothesis $\alpha + \beta -1 - 2\varepsilon/3\leq \alpha'+\beta'-1\leq \alpha + \beta -1 + 2\varepsilon/3$. If $n\geq 2^{3/\varepsilon}$, then
    \[n^{\alpha+\beta-1-\varepsilon}\leq \frac{1}{2}n^{\alpha + \beta -1 - 2\varepsilon/3} \leq \frac{3}{2}n^{\alpha + \beta -1 + 2\varepsilon/3} \leq n^{\alpha+\beta-1+\varepsilon},\]
    so:
    \[\begin{split}&\Pr\left(n^{\alpha+\beta-1-\varepsilon}\leq |A_n\cap B_n|\leq n^{\alpha+\beta-1+\varepsilon} \right)\\
    \geq & \Pr\left(\frac{1}{2}n^{\alpha + \beta -1 - 2\varepsilon/3}\leq |A_n\cap B_n|\leq \frac{3}{2}n^{\alpha + \beta -1 + 2\varepsilon/3} \right)\\
    \geq & \Pr\left(\frac{1}{2}n^{\alpha' + \beta' -1} < |A_n\cap B_n| < \frac{3}{2}n^{\alpha' + \beta' -1} \right).
    \end{split}\]
    Combine two estimations on $n$. If $n\geq \max\left\{2^{3/\varepsilon},8^{1/(\alpha+\beta-1-\varepsilon)}\right\}$, then:
    \[\Pr\left(n^{\alpha+\beta-1-\varepsilon}\leq |A_n\cap B_n|\leq n^{\alpha+\beta-1+\varepsilon}\right)\geq 1-\frac{48}{n^{\alpha'+\beta'-1}}\geq  1-\frac{48}{n^{\alpha+\beta-1-\varepsilon}}.\]
    As $\alpha+\beta-1-\varepsilon>0$, when $n$ goes to infinity
    \[\frac{48}{n^{\alpha+\beta-1-\varepsilon}}\tendvers 0.\]
\end{proof}

\begin{proof}[Proof of Theorem \ref{intersection}]
    By Lemma \ref{closed} the intersection $\bs A\cap \bs B$ is a sequence of permutation invariant random subsets. In either cases, denote $(Q_n)$ the sequence of events defined by
    \[Q_n = \{n^{\alpha-\varepsilon/3}\leq|A_n|\leq n^{\alpha+\varepsilon/3}\textup{ and }n^{\beta-\varepsilon/3}\leq|B_n|\leq n^{\beta+\varepsilon/3}\}\] 
    for some small $\varepsilon>0$. By the densabilities of $\bs A$ and $\bs B$, a.a.s. $Q_n$ is true. Note that $Q_n$ is a union of events of type $\{|A_n|= k,|B_n| = l\}$. Denote 
    \[\begin{split}
        \mathbb{N}^2_{\bs A,\bs B, n, \varepsilon} := & \left\{ (k,l)\in \mathbb{N}^2 \,\middle|\, n^{\alpha-\varepsilon/3}\leq k \leq n^{\alpha+\varepsilon/3},n^{\beta-\varepsilon/3}\leq l \leq n^{\beta+\varepsilon/3} \right.\\
    & \textup{ and }\Pr(|A_n| = k,|B_n|=l)\neq 0 \Big\}.
    \end{split}\]
    For $(k,l)\in \mathbb{N}^2_{\bs A,\bs B, n, \varepsilon}$, we may do a change of variables $k = n^{\alpha'}$, $l = n^{\beta'}$ so that
        \[\alpha-\varepsilon/3\leq \alpha'\leq \alpha+\varepsilon/3\textup{ and }\beta-\varepsilon/3\leq \beta'\leq \beta+\varepsilon/3.\]
    
    \begin{enumerate}[(i)]
        \item Suppose that $\alpha +\beta<1$. Let $0<\varepsilon<1-\alpha-\beta$. We shall prove that a.a.s. $A_n\cap B_n=\vide$.
        
        By the formula of total probability and Markov's inequality,
        \[ \begin{split}
        \Pr(A_n\cap B_n\neq\vide) & \leq \Pr\left(|A_n \cap B_n|\geq 1\,\middle|\,Q_n\right)\Pr(Q_n)+\Pr(\overline{Q_n})\\
        & \leq \sum_{(k,l)\in\mathbb{N}^2_{\bs A,\bs B, n, \varepsilon}} \Big[ \Pr\left(|A_n \cap B_n|\geq 1\,\middle|\,|A_n|=k,|B_n| = l\right)\\
        &\qquad\qquad\qquad\quad\Pr(|A_n|=k,|B_n| = l|)\Big]+\Pr(\overline{Q_n}).\\        
        & \leq \sum_{(k,l)\in\mathbb{N}^2_{\bs A,\bs B, n, \varepsilon}} \Big[ \esp{|A_n \cap B_n|\,\middle|\,|A_n|=k,|B_n| = l}\\
        &\qquad\qquad\qquad\quad\Pr(|A_n|=k,|B_n| = l|)\Big]+\Pr(\overline{Q_n}).
        \end{split}\]
        
        For any $(k,l)\in \mathbb{N}^2_{\bs A,\bs B, n, \varepsilon}$, by Lemma \ref{concentration uniform} (i)
        \[\begin{split}
        \esp{|A_n \cap B_n|\,\middle|\,|A_n|=k,|B_n| = l}
        & = \esp{|A_n \cap B_n|\,\middle|\,|A_n|=n^{\alpha'},|B_n| = n^{\beta'}}\\
        & \leq n^{\alpha'+\beta'-1} \leq n^{\alpha+\beta+2/3\varepsilon-1}\leq n^{-1/3\varepsilon}. 
        \end{split}\]
        Hence
        \[\Pr(A_n\cap B_n\neq\vide) \leq n^{-1/3\varepsilon}\Pr(Q_n)+\Pr(\overline{Q_n})\tendvers 0.\]
        
        \item Suppose that $\alpha+\beta>1$. Let $0<\varepsilon<\alpha+\beta-1$. We shall prove that a.a.s.
        \[n^{\alpha+\beta-1-\varepsilon}\leq|A_n\cap B_n|\leq n^{\alpha+\beta-1+\varepsilon}.\]

        By the formula of total probability,
        \[ \begin{split}&\quad \Pr(n^{\alpha+\beta-1-\varepsilon} \leq |A_n\cap B_n| \leq n^{\alpha+\beta-1+\varepsilon})\\
        & \geq \Pr\left(n^{\alpha+\beta-1-\varepsilon}\leq |A_n\cap B_n| \leq n^{\alpha+\beta-1+\varepsilon}\,\middle|\,Q_n\right)\Pr(Q_n)\\
        & = \sum_{(k,l)\in\mathbb{N}^2_{\bs A,\bs B, n, \varepsilon}} \Big[ \Pr\left(n^{\alpha+\beta-1-\varepsilon}\leq |A_n\cap B_n| \leq n^{\alpha+\beta-1+\varepsilon}\,\middle|\,|A_n|=k,|B_n| = l\right)\\
        &\qquad\qquad\qquad\quad\Pr(|A_n|=k,|B_n| = l|)\Big].
        \end{split}\]
        
        By Lemma \ref{aux intersection} and Proposition \ref{decomposition}. If $n\geq \max\left\{2^{3/\varepsilon},8^{1/(\alpha+\beta-1-\varepsilon)}\right\}$, then for any $(k,l)\in \mathbb{N}^2_{\bs A,\bs B, n, \varepsilon}$:
        \[\begin{split} &\quad \Pr\left(n^{\alpha+\beta-1-\varepsilon}\leq |A_n\cap B_n|\leq n^{\alpha+\beta-1+\varepsilon} \,\middle|\, |A_n|= k,|B_n| = l \right)\\
        & = \Pr\left(n^{\alpha+\beta-1-\varepsilon}\leq |A_n\cap B_n|\leq n^{\alpha+\beta-1+\varepsilon} \,\middle|\, |A_n|=n^{\alpha'},|B_n| = n^{\beta'}\right)\\
        & \geq 1-\frac{48}{n^{\alpha+\beta-1+\varepsilon}} \tendvers 1.
        \end{split}
        \]
        Hence for $n\geq \max\left\{2^{3/\varepsilon},8^{1/(\alpha+\beta-1-\varepsilon)}\right\}$:
        \[\begin{split} & \quad \Pr(n^{\alpha+\beta-1-\varepsilon}\leq|A_n\cap B_n|\leq n^{\alpha+\beta-1+\varepsilon})\\
        &\geq \sum_{(k,l)\in\mathbb{N}^2_{\bs A,\bs B, n, \varepsilon}} \left( 1-\frac{48}{n^{\alpha+\beta-1+\varepsilon}} \right) \Pr(|A_n|=k, |B_n|=l)\\
        &\geq \left(1-\frac{48}{n^{\alpha+\beta-1+\varepsilon}}\right)\Pr(Q_n)\tendvers 1.
        \end{split}\]
            
    \end{enumerate}
\end{proof}

Remark that when $\alpha+\beta = 1$ the density is not determined , as Proposition \ref{dens0Bernoulli} showed for Bernoulli random subsets. As the class is closed under intersection, we can conclude on multiple intersections.

\begin{cor} Let $\bs A_1, \dots,\bs A_k$ be independent sequences in $\mathcal{D}(\bs E)$ of positive densities. If $\displaystyle{\sum_{i=1}^k\codens \bs A_i<1}$, then
\[\codens\left(\bigcap_{i=1}^k \bs A_i\right) = \sum_{i=1}^k\codens \bs A_i.\]

If $\displaystyle{\sum_{i=1}^k\codens \bs A_i > 1}$, then $\dens\left(\bigcap_{i=1}^k \bs A_i\right) = -\infty$.
\end{cor}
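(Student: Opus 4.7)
The plan is to argue by induction on $k$, with a trivial base case $k=1$ (the claim reduces to $\codens(\bs A_1) = \codens(\bs A_1)$). The inductive step splits according to whether $\sum_i \codens \bs A_i$ lies below or above the critical threshold $1$, with rather different arguments in the two cases.

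In the subcritical case $\sum_{i=1}^k \codens \bs A_i < 1$, I would apply the inductive hypothesis to $\bs A_1,\dots,\bs A_{k-1}$, whose codensities also sum to less than $1$ (since every $\codens \bs A_i \geq 0$), yielding $\bigcap_{i<k}\bs A_i\in\mathcal D(\bs E)$ with codensity $\sum_{i<k}\codens \bs A_i$. Because $\bs A_k$ is independent of $\bs A_1,\dots,\bs A_{k-1}$, it is independent of $\bigcap_{i<k}\bs A_i$. Theorem~\ref{intersection} then applies (the sum of the two densities equals $2-\sum_i \codens \bs A_i > 1$, in particular not equal to $1$), placing $\bigcap_{i=1}^k\bs A_i$ in $\mathcal D(\bs E)$ with codensity $\sum_{i=1}^k \codens \bs A_i$.

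The supercritical case $\sum_{i=1}^k \codens \bs A_i > 1$ is the main obstacle, because a naive iteration of Theorem~\ref{intersection} can break down: in pathological configurations (for instance $k=3$ sequences of codensity $1/2$ each) every proper subset of indices has codensity sum exactly $1$, and no reordering rescues the iteration. I would therefore argue directly, following the scheme of Theorem~\ref{intersection}~(i). Set $\alpha_i := \dens \bs A_i$, write $n := |E_n|$, and fix $\varepsilon>0$ small enough that $k\varepsilon < \sum_i\codens\bs A_i - 1$. By Proposition~\ref{Concentration} applied to each $\bs A_i$, the event
\[
Q_n \;:=\; \bigcap_{i=1}^k \bigl\{\, n^{\alpha_i-\varepsilon} \leq |A_{i,n}| \leq n^{\alpha_i+\varepsilon}\, \bigr\}
\]
is asymptotically almost sure. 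Conditional on cardinalities $|A_{i,n}|=k_i$ for every $i$, Proposition~\ref{decomposition} turns each $A_{i,n}$ into a uniform random subset of size $k_i$ (and independence across $i$ is preserved), so
\[
\esp{\bigl|{\textstyle\bigcap_i} A_{i,n}\bigr| \;\middle|\; |A_{i,n}|=k_i\,\forall i} \;=\; n\prod_{i=1}^k \frac{k_i}{n} \;=\; n^{1-k}\prod_{i=1}^k k_i.
\]
On $Q_n$ this is bounded by $n^{1-\sum_i\codens\bs A_i+k\varepsilon}\to 0$, so Markov's inequality yields $\PrCond{\bigcap_i A_{i,n}\neq\vide}{Q_n}\to 0$. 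Combined with $\Pr(Q_n)\to 1$ this shows that $\bigcap_i A_{i,n}$ is a.a.s.\ empty, hence $\dens(\bigcap_i\bs A_i)=-\infty$.
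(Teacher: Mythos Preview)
Your proof is correct and, in the supercritical case, more careful than what the paper offers. The paper does not give a proof of this corollary at all; it simply remarks before the statement that ``as the class is closed under intersection, we can conclude on multiple intersections'', leaving the reader to iterate Theorem~\ref{intersection}.

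You are right to observe that naive iteration is insufficient when $\sum_i\codens\bs A_i>1$: your example of three independent sequences each of codensity $1/2$ is exactly the sort of configuration where every pairwise intersection lands on the excluded case $\alpha+\beta=1$ of Theorem~\ref{intersection}, and no permutation of the factors avoids it. Your direct argument---conditioning on the cardinalities via Proposition~\ref{decomposition}, computing $\esp{|\bigcap_i A_{i,n}|}=n^{1-k}\prod_i k_i$ for independent uniform subsets, and applying Markov's inequality on the high-probability event $Q_n$---is precisely the $k$-fold analogue of the paper's own proof of Theorem~\ref{intersection}(i), and it closes the gap cleanly. The subcritical case is handled by straightforward induction, as you do, since partial codensity sums remain strictly below $1$ throughout. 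In short: your approach coincides with the paper's intended proof in the easy half and genuinely repairs an oversight in the hard half.
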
\quad\\

\subsection{Another model: random functions}

We give here another natural model of random subsets : image of a random function, which can be found in \cite{Gro93} p.271 by Gromov. This is also a variance of random groups considered by Ollivier in \cite{Oll05} Lemma 59. In this subsection we prove that such a model is densable and permutation invariant.

\begin{defi} Let $E,F$ be finite subsets of cardinalities $n,m$. Denote $E^F$ the set of functions from $F$ to $E$. A random function $\Phi$ from $F$ to $E$ is a $E^F$-valued random variable.
\end{defi}

Let $\Phi$ be a random function from $F$ to $E$. Its law is determined by \[\Pr(\Phi = \varphi)\] through all $\varphi\in E^F$.

The random function $\Phi$ can be regarded as a vector of $E$-valued random variables (or random elements of $E$) $(\Phi(y))_{y\in F}$ indexed by $F$. Note that these random elements are not necessarily independent. The image $\im(\Phi) = \Phi(F):=\{\Phi(y)|y\in F\}$ is then a random subset of $E$.

\begin{ex} (Uniform random function) Let $\Phi$ be the uniform distribution on all functions from $F$ to $E$. Its law is
\[\Pr(\Phi = \varphi) = \frac{1}{|E^F|} = \frac{1}{n^m}\]
through all $\varphi\in E^F.$
\end{ex}

\begin{prop} Let $\Phi$ be a uniform random function from $F$ to $E$. Then the random elements $(\Phi(y))_{y\in F}$ are independent (identical) uniform distributions on $E$.
\end{prop}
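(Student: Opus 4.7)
The plan is to compute the joint law of the family $(\Phi(y))_{y\in F}$ directly from the definition of a uniform random function and verify it factors as a product of uniform marginals on $E$. This is a textbook counting argument; the main (and essentially only) content is matching cardinalities.

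First I would fix an arbitrary family of values $(x_y)_{y\in F}\in E^F$ and observe that the event $\bigcap_{y\in F}\{\Phi(y)=x_y\}$ corresponds to the single function $\varphi\colon F\to E$ defined by $\varphi(y)=x_y$. Since $\Phi$ is uniform on $E^F$ and $|E^F|=n^m$, this event has probability
\[
\Pr\!\left(\bigcap_{y\in F}\{\Phi(y)=x_y\}\right) \;=\; \Pr(\Phi=\varphi) \;=\; \frac{1}{n^m}.
\]

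Next I would compute each marginal. Fix $y_0\in F$ and $x_0\in E$; the event $\{\Phi(y_0)=x_0\}$ corresponds to the set of functions $\varphi\in E^F$ with $\varphi(y_0)=x_0$, whose cardinality is $n^{m-1}$, because $\varphi$ is free to take any value in $E$ at each of the $m-1$ points of $F\setminus\{y_0\}$. Hence $\Pr(\Phi(y_0)=x_0)=n^{m-1}/n^m=1/n$, so $\Phi(y_0)$ is uniform on $E$.

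Finally I would combine the two computations: for every family $(x_y)_{y\in F}$,
\[
\Pr\!\left(\bigcap_{y\in F}\{\Phi(y)=x_y\}\right) \;=\; \frac{1}{n^m} \;=\; \prod_{y\in F}\frac{1}{n} \;=\; \prod_{y\in F}\Pr(\Phi(y)=x_y),
\]
which is exactly the definition of independence of the family $(\Phi(y))_{y\in F}$, each being uniform on $E$. There is no real obstacle here; the only thing to be careful about is distinguishing the role of $F$ (indexing set, of size $m$) from $E$ (target, of size $n$) in the exponent bookkeeping.
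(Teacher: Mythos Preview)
Your proof is correct and follows essentially the same approach as the paper: compute the marginal $\Pr(\Phi(y_0)=x_0)=n^{m-1}/n^m=1/n$ by counting functions, then observe that the joint event $\bigcap_{y\in F}\{\Phi(y)=x_y\}$ is the single point $\{\Phi=\varphi\}$ with probability $1/n^m=\prod_{y\in F}1/n$. The only cosmetic difference is that the paper computes the marginals first and the joint law second, whereas you do it in the opposite order.
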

\begin{proof}
    Let $x\in E$, $y\in F$. The number of functions from $F$ to $E$ such that $\phi(y) = x$ is $n^{m-1}$. So the law of $\Phi(y)$ is 
    \[\Pr(\Phi(y) = x) = \frac{n^{m-1}}{n^m} = \frac{1}{n}.\]
    Which is an uniform distribution on $E$.
    
    Denote $F = \{y_1,\dots,y_m\}$. Let $(x_1,\dots,x_m)$ a vector of $m$ elements in $E$. Let $\varphi\in E^F$ such that $\varphi(y_i) = x_i$ for all $1\leq i \leq m$. Then 
    \[\Pr\left(\bigwedge\limits_{i=1}^m\Phi(y_i) = x_i\right) = \Pr(\Phi = \varphi) =  \frac{1}{n^m} = \prod_{i=1}^m \Pr(\Phi(y_i) = x_i).\]
\end{proof}\quad

\begin{prop} The image of an uniform random function is a permutation invariant random subset.
\end{prop}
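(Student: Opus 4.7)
The plan is to reduce the claim to the observation that the uniform distribution on $E^F$ is invariant under left-composition with any permutation of $E$, and that this composition commutes with taking the image of a function.

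More concretely, fix a permutation $\sigma \in \mathcal{S}(E)$ and a subset $a \subset E$; I want to show $\Pr(\im(\Phi) = a) = \Pr(\im(\Phi) = \sigma(a))$. First I would observe that the map $\tau_\sigma \colon E^F \to E^F$ defined by $\tau_\sigma(\varphi) = \sigma \circ \varphi$ is a bijection (its inverse is $\tau_{\sigma^{-1}}$), and that $\im(\sigma \circ \varphi) = \sigma(\im \varphi)$. Consequently, $\tau_\sigma$ restricts to a bijection between the sets
\[
\{\varphi \in E^F : \im \varphi = a\} \quad\text{and}\quad \{\varphi \in E^F : \im \varphi = \sigma(a)\},
\]
so these two subsets of $E^F$ have the same cardinality.

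Then, since $\Phi$ is the uniform distribution on $E^F$, each fiber $\{\Phi = \varphi\}$ has the same probability $1/n^m$, and summing over the two bijectively identified sets yields
\[
\Pr(\im(\Phi) = a) = \frac{|\{\varphi : \im \varphi = a\}|}{n^m} = \frac{|\{\varphi : \im \varphi = \sigma(a)\}|}{n^m} = \Pr(\im(\Phi) = \sigma(a)),
\]
which is precisely the permutation invariance condition from the definition.

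There is no substantial obstacle here; the only thing to be careful about is distinguishing the action of $\sigma$ on subsets of $E$ from its induced action on $E^F$, and verifying that the two commute via $\im(\cdot)$. Once this verification is made explicit, the conclusion is immediate from the uniformity of $\Phi$.
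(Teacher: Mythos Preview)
Your proof is correct and essentially the same as the paper's: both use that left-composition by $\sigma$ is a bijection of $E^F$ preserving the uniform law, together with $\im(\sigma\circ\varphi)=\sigma(\im\varphi)$. The paper phrases this as ``$\sigma^{-1}\circ\Phi$ has the same law as $\Phi$'' rather than counting fibers, but the content is identical.
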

\begin{proof}
    Let $\Phi$ be an uniform random function from $F$ to $E$. Let $\sigma\in \mathcal{S}(E)$, then for all $\varphi\in E^F$:
    \[\Pr(\Phi = \varphi) = \Pr(\Phi = \sigma\circ\varphi) = \Pr(\sigma\moinsun\circ\Phi = \varphi).\]
    The random function $\sigma\moinsun\circ\Phi$ has the same law of $\Phi$. Now let $a\subset E$
    \[\Pr(\im(\Phi) = a) =\Pr(\im(\sigma\moinsun\circ\Phi) = a) =\Pr(\im(\Phi) = \sigma(a)).\]
\end{proof}

\section{The multi-dimensional intersection formula}

Let $\bs E = (E_n)$ be a sequence of finite sets with $|E_n| = n$ and $k\geq 2$ be an integer. The set of pairwise different $k$-tuples of $E_n$ is 
\[E_n^{(k)} := \{(x_1,\dots,x_k)\in E_n^k \;|\; x_i\neq x_j\;\forall i\neq j\}.\]
Denote $\bs E ^{(k)} = (E_n^{(k)})_{n\in \mathbb{N}}$.

Similarly, for a sequence of random subsets $\bs A = (A_n)$ of $\bs E$, we can define
\[A_n^{(k)} := \{(x_1,\dots,x_k)\in A_n^k \;|\; x_i\neq x_j\;\forall i\neq j\},\]
which is a random subset of $E_n^{(k)}$. Denote also $\bs A^{(k)} = (A_n^{(k)})$. We will establish an intersection formula between a sequence of random subsets of type $\bs A^{(k)}$ and a sequence of fixed subsets $\bs X = (X_n)$ of $\bs E^{(k)}$.

\begin{prop} 
    Let $\bs A$ be a densable sequence of random subsets of $\bs E$ with density $d>0$. Then $\bs A^{(k)}$ is a densable sequence of random subsets of $\bs E^{(k)}$ with density $d$. Namely,
    \[\dens_{\bs E^{(k)}} (\bs A^{(k)}) = \dens_{\bs E} (\bs A).\]
\end{prop}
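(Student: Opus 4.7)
The plan is to reduce the statement to the characterization of densability given in Proposition~\ref{Concentration} via an elementary size estimate. The key observation is that $|E_n^{(k)}| = n(n-1)\cdots(n-k+1)$, and whenever $|A_n| \geq k$ we have $|A_n^{(k)}| = |A_n|(|A_n|-1)\cdots(|A_n|-k+1)$, so both quantities behave like $k$-th powers and the ratio of their logarithms should tend to $d$.

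I first invoke the densability of $\bs A$: fix $\varepsilon > 0$, pick $\varepsilon' = \varepsilon/2$, which we may assume smaller than $d$ using the hypothesis $d > 0$, and apply Proposition~\ref{Concentration} to obtain a.a.s.\ $n^{d-\varepsilon'} \leq |A_n| \leq n^{d+\varepsilon'}$. Since $d-\varepsilon' > 0$, this forces $|A_n| \geq 2k$ a.a.s.\ for $n$ large. On that event the elementary sandwich $(|A_n|/2)^k \leq |A_n^{(k)}| \leq |A_n|^k$ yields $2^{-k} n^{k(d-\varepsilon')} \leq |A_n^{(k)}| \leq n^{k(d+\varepsilon')}$.

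Finally I compare these with $|E_n^{(k)}|^{d \pm \varepsilon}$ using the crude bounds $(n/2)^k \leq |E_n^{(k)}| \leq n^k$ (valid for $n \geq 2k$). Since $\varepsilon > \varepsilon'$, the factor $n^{k(\varepsilon - \varepsilon')}$ eventually dominates any fixed constant, so the bounds above sit inside $[|E_n^{(k)}|^{d-\varepsilon},\, |E_n^{(k)}|^{d+\varepsilon}]$ for $n$ large; Proposition~\ref{Concentration} applied in $\bs E^{(k)}$ then gives that $\bs A^{(k)}$ is densable with density $d$. The only delicate point is the degenerate regime $|A_n| < k$, where $A_n^{(k)}$ is empty and the density identity could fail; the hypothesis $d > 0$ is exactly what rules this out asymptotically via Proposition~\ref{Concentration}, which is why the statement requires $d > 0$ rather than $d \geq 0$.
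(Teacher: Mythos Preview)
Your proof is correct and follows essentially the same approach as the paper's: both invoke Proposition~\ref{Concentration} on $\bs A$, use the elementary estimate $|E_n^{(k)}|\sim n^k$ (and the analogous one for $A_n^{(k)}$), and absorb the multiplicative constants into the $\varepsilon$-slack to conclude via Proposition~\ref{Concentration} in $\bs E^{(k)}$. Your version is in fact slightly more explicit than the paper's $o(1)$ bookkeeping, and your remark on why $d>0$ is needed to rule out $|A_n|<k$ is a point the paper leaves implicit.
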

\begin{proof}
    Note that $n^k-k^2(n-1)^k \leq |E_n^{(k)}|\leq n^k$, so $ |E_n^{(k)}|= n^{k+o(1)}$.

    Let $\varepsilon>0$. By densability a.a.s. $n^{d-\varepsilon/2}\leq |A_n| \leq n^{d+\varepsilon/2}$. By the same argument above a.a.s. $|A_n^{(k)}|= |A_n|^{k+o(1)}$ as random variables. Hence a.a.s. \[(n^k)^{d-\varepsilon/2+o(1)}\leq |A_n^{(k)}| \leq (n^k)^{d+\varepsilon/2+o(1)},\]
    so a.a.s.
    \[|E_n^{(k)}|^{d-\varepsilon}\leq |A_n^{(k)}| \leq |E_n^{(k)}|^{d+\varepsilon}.\]
\end{proof}

Although the densability is preserved, it is not the case for being permutation invariant. Given a permutation invariant random subset $A_n$ of $E_n$, the random subset $A_n^{(k)}$ is \textit{not} permutation invariant in $E_n^{(k)}$ for $k\geq 2$. See the following example.

\begin{ex} Let $(A_n)$ be a sequence of Bernoulli random subsets of $(E_n)$ with density $0<d<1$. Recall that subsets of the same cardinality have the same probability to be included in a permutation invariant random subset. Let $x_1,\dots,x_4$ be distinct elements in $E_n$. 
\[\Pr\left(\{(x_1,x_2),(x_3,x_4)\}\subset A_n^{(2)}\right) = \Pr\left(\{x_1,x_2,x_3,x_4\}\subset A_n\right) = n^{4(d-1)},\]
while
\[\Pr\left(\{(x_1,x_2),(x_2,x_3)\}\subset A_n^{(2)}\right) = \Pr\left(\{x_1,x_2,x_3\}\subset A_n\right) = n^{3(d-1)}.\qed\]
\end{ex}\quad

As a result the classical intersection formula (Theorem \ref{intersection}) can not be applied in this context. Actually, for $k\geq 2$ the intersection formula \textit{does not} work for some choices of $\bs X$. We give here a counter example. 

\begin{ex}\label{cex no intersection formula}
Let $\bs A$ be a sequence of random subsets in $\mathcal{D}(\bs E)$ with density $3/4$. Let $\bs X = (X_n)$ be a sequence of subsets defined by 
\[X_n = \{x_n\}\times (E_n\backslash\{x_n\})\subset E_n^{(2)}\]
with some $x_n\in E_n$. By its construction $\dens_{\bs E^{(2)}}(\bs X) = 1/2$, so we expected that $\dens(\bs A^{(2)} \cap \bs X) = 3/4+1/2-1=1/4$. However, we have \[\dens(\bs A^{(2)} \cap \bs X) = 0\]
because a.a.s. $A_n \cap \{x_n\} = \vide$.
\end{ex}\quad

For the intersection formula between $\bs A^{(k)}$ and $\bs X$, we need an additional condition on $\bs X$. More precisely, $\bs X$ can not have too much "self-intersection". We will discuss this condition in subsection 4.1.\\

Following the path for proving the intersection formula (Theorem \ref{intersection}), we shall study the case that $\bs A$ is a sequence of Bernoulli random subsets with density $d$ (subsection 4.2). We then adapt the proof for the uniform density model by estimating the probabilities $\Pr\left(\{x_1,\dots,x_r\}\subset A_n\right)$ (subsection 4.3).

For the general case (subsection 4.4), according to Proposition \ref{decomposition}, we can decompose a permutation invariant random subset into uniform random subsets. We then need to bound $|A_n^{(k)}\cap X_n|$ for sequences of uniform random subsets, uniformly in a small neighborhood of densities $d'\in [d-\varepsilon,d+\varepsilon]$.

\subsection{Statement of the theorem}

\begin{defi}[Self-intersection partition] Let $\bs X = (X_n)$ be a sequence of fixed subsets of $\bs E^{(k)}$ with density $\alpha$. For $0\leq i \leq k$, the $i$-th self-intersection of $X_n$ is
\[Y_{i,n} := \{(x,y)\in X_n^2\,|\, |x\cap y| = i\}\]
where $|x\cap y|$ is the number of common elements of $x = (x_1,\dots,x_k)$ and $y = (y_1,\dots,y_k)$. In particular $Y_{0,n}$ is the set of pairs of $X_n$ having no intersection. 

Note that $(Y_{i,n})_{0\leq i \leq k}$ is a partition of $X_n^2$, called the self-intersection partition of $X_n$. Namely,
\[X_n^2 = \bigsqcup_{i=0}^k Y_{i,n}.\]

Denote $\bs Y_i = (Y_{i,n})_{n\in \mathbb{N}}$ the $i$-th self intersection of $\bs X$, and  $(\bs Y_i)_{0\leq i \leq k}$ is called the self-intersection partition of $\bs X$. Namely,
\[\bs X^2 = \bigsqcup_{i=0}^k \bs Y_i.\]
\end{defi}\quad

Remark that the sequences $\bs X^2$ and $\bs Y_i$ are sequences of fixed subsets of $(\bs E^{(k)})^2 = \left((E_n^{(k)})^2\right)_{n\in \mathbb{N}}$. Note that $\dens_{(\bs E^{(k)})^2} (\bs X^2) = \dens_{\bs E^{(k)}} (\bs X) = \alpha$. To give a condition on $\bs Y_i$, we need the notion of \textit{upper density}, defined by an upper limit:
 
\begin{defi} Let $\bs Y = (Y_n)$ be a sequence of subsets of $\bs E = (E_n)$. The upper density of $\bs Y$ in $\bs E$ is 
\[\overline{\dens}_{\bs E}\bs Y := \varlimsup_{n\to\infty}\log_{|E_n|}(|Y_{n}|).\]
\end{defi}

We introduce here, for a sequence of densable fixed subsets $\bs X$ of $\bs E^{(k)}$ with density $\alpha$, the small self-intersection condition: 

\begin{defi}\label{multidim condition} Let $\bs X$ be a sequence of subsets of $\bs E^{(k)}$ with density $\alpha$ and let $(\bs Y_i)_{0\leq i\leq k}$ be its self-intersection partition. Let $d > 1-\alpha$. We say that $\bs X$ has $d$-small self-intersection if for every $1\leq i \leq k-1$
\begin{equation}
    \overline{\dens}_{(\bs E^{(k)})^2}\left(\bs Y_i\right) < \alpha  - (1-d) \times \frac{i}{2k}.
\end{equation}
\end{defi}

Remark that the right-hand side of inequality (\ref{multidim condition}) is between $0$ and $\alpha$ because $\alpha > 1-d > 0$. Note that $|Y_{k,n}| = |\{(x,y)\in X_n^2\,|\, x = y\}|= |X_n|$ so \[\dens_{(\bs E^{(k)})^2} \bs Y_k = \frac{\alpha}{2} < \alpha-(1-d)\frac{k}{2k},\] which verifies (\ref{multidim condition}) automatically. On the other hand, as densities of $\bs Y_i$ for $1\leq i\leq k$ are all smaller then $\alpha$ and $|Y_{0,n}| = |X_n^2|-\sum_{i=1}^k|Y_{i,n}|$, by Proposition \ref{complement} \[\dens \bs Y_0 = \dens \bs X^2 = \alpha.\]
\quad

The purpose of this section is to demonstrate the following theorem. 
\begin{thm}[Multi-dimensional intersection formula] \label{multidim intersection} Let $\bs A$ be a densable sequence of permutation invariant random subsets of $\bs E$ with density $0<d<1$. Let $\bs X = (X_n)$ be a sequence of (fixed) subsets of $\bs E^{(k)}$ with density $\alpha$.
    \begin{enumerate}[(i)]
        \item If $d + \alpha <1$, then $\bs A^{(k)}\cap \bs X$ is densable and \[\dens (\bs A^{(k)}\cap \bs X) = -\infty.\]
        \item If $d + \alpha >1$ and $\bs X$ has $d$-small self intersection (condition (\ref{multidim condition})), then $\bs A^{(k)}\cap \bs X$ is densable and 
        \[\dens (\bs A^{(k)}\cap \bs X) = \alpha + d - 1.\]
    \end{enumerate} 
\end{thm}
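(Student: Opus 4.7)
My plan is to follow the two-step scheme used in the proof of Theorem \ref{intersection}: reduce to the uniform-distribution case via the decomposition of Proposition \ref{decomposition}, then apply the first and second moment methods to $N_n := |A_n^{(k)}\cap X_n|$. I would introduce, for every $r\geq 1$ and every $k'\leq n$, the quantity
\[ p_r^{(k')} := \frac{\binom{n-r}{k'-r}}{\binom{n}{k'}} = \frac{k'(k'-1)\cdots(k'-r+1)}{n(n-1)\cdots(n-r+1)}, \]
the probability that $r$ prescribed distinct elements of $E_n$ all lie in a uniform random subset of cardinality $k'$. By Proposition \ref{decomposition} and permutation invariance, $\Pr(\{z_1,\dots,z_r\}\subset A_n) = \esp{p_r^{(|A_n|)}}$ for any pairwise distinct $z_1,\dots,z_r\in E_n$. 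Fixing a small $\varepsilon>0$, I would work conditionally on $Q_n := \{n^{d-\varepsilon}\leq |A_n|\leq n^{d+\varepsilon}\}$, which is a.a.s.\ satisfied by densability of $\bs A$; on $Q_n$ an elementary calculation gives, uniformly in $k'$, $p_r^{(k')} = n^{r(d-1)+O(r\varepsilon)}$.

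Using permutation invariance on $E_n^{(k)}$, the conditional first moment becomes
\[ \esp{N_n \;\middle|\; Q_n} = |X_n|\cdot \esp{p_k^{(|A_n|)} \;\middle|\; Q_n} = n^{k(\alpha+d-1)+O(k\varepsilon)+o(1)}. \]
For part (i), as $\alpha+d<1$, I would pick $\varepsilon$ with $k(\alpha+d-1)+O(k\varepsilon)<0$, so $\esp{N_n\mid Q_n}\to 0$; Markov's inequality combined with $\Pr(\overline{Q_n})\to 0$ then gives $\Pr(N_n\geq 1)\to 0$, that is, a.a.s.\ $A_n^{(k)}\cap X_n = \vide$ and $\dens(\bs A^{(k)}\cap \bs X) = -\infty$.

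For part (ii), I would decompose the conditional second moment along the self-intersection partition $X_n^2 = \bigsqcup_{i=0}^{k} Y_{i,n}$:
\[ \esp{N_n^2 \;\middle|\; Q_n} = \sum_{i=0}^{k}|Y_{i,n}|\cdot \esp{p_{2k-i}^{(|A_n|)} \;\middle|\; Q_n}. \]
The $i=0$ term contributes $|Y_{0,n}|\cdot n^{2k(d-1)+o(1)} = n^{2k(\alpha+d-1)+o(1)}$, matching $(\esp{N_n\mid Q_n})^2$ and so cancelling in the variance. For $1\leq i\leq k-1$, the $d$-small self-intersection hypothesis supplies a fixed $\delta>0$ with $|Y_{i,n}|\leq n^{2k\alpha-i(1-d)-2k\delta}$ eventually, so
\[ |Y_{i,n}|\cdot \esp{p_{2k-i}^{(|A_n|)}\;\middle|\; Q_n} \leq n^{2k(\alpha+d-1)-2k\delta+O(k\varepsilon)}; \]
the $i=k$ term, for which $|Y_{k,n}| = |X_n|$, contributes order $n^{k(\alpha+d-1)+o(1)}$, negligible compared to $n^{2k(\alpha+d-1)}$ since $\alpha+d>1$. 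Choosing $\varepsilon$ small then gives $\Var(N_n\mid Q_n) = o\bigl((\esp{N_n\mid Q_n})^2\bigr)$, and Chebyshev's inequality yields $N_n = n^{k(\alpha+d-1)+o(1)}$ a.a.s.\ conditional on $Q_n$, and hence unconditionally; Proposition \ref{Concentration} then concludes that $\bs A^{(k)}\cap \bs X$ is densable with density $\alpha+d-1$.

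The main obstacle in this plan is the uniform control of $p_r^{(k')}$ as $k'$ ranges over the concentration window, which is the very reason for conditioning on $Q_n$ rather than directly estimating $\esp{p_r^{(|A_n|)}}$: the densability of $\bs A$ alone supplies no quantitative tail bound on $|A_n|$ outside $Q_n$, so rare but extreme values could otherwise spoil the moment computation.
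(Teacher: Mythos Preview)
Your outline follows the paper's spirit (condition on the size of $A_n$, then apply first and second moments) and part~(i) is fine. The gap is in part~(ii), precisely at the sentence ``matching $(\esp{N_n\mid Q_n})^2$ and so cancelling in the variance.'' Conditioning only on the event $Q_n=\{n^{d-\varepsilon}\le |A_n|\le n^{d+\varepsilon}\}$ leaves $|A_n|$ random, and this residual randomness contributes to $\Var(N_n\mid Q_n)$ a term that need not be small. Concretely, by the law of total variance,
\[
\Var(N_n\mid Q_n)=\esp{\Var(N_n\mid |A_n|)\mid Q_n}+\Var\bigl(\esp{N_n\mid |A_n|}\bigr.\bigl.\mid Q_n\bigr),
\]
and the second summand equals $|X_n|^2\Var\bigl(p_k^{(|A_n|)}\mid Q_n\bigr)$. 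On $Q_n$ the random variable $p_k^{(|A_n|)}$ ranges over $[n^{k(d-1-\varepsilon)+o(1)},n^{k(d-1+\varepsilon)+o(1)}]$, so its variance can be of order $n^{2k(d-1+\varepsilon)}$, giving a contribution of order $n^{2k(\alpha+d-1+\varepsilon)}$ to $\Var(N_n\mid Q_n)$; meanwhile $(\esp{N_n\mid Q_n})^2$ may be as small as $n^{2k(\alpha+d-1-\varepsilon)}$. The ratio is then $n^{O(k\varepsilon)}$, which does \emph{not} tend to zero for fixed $\varepsilon$, so Chebyshev does not conclude. Equivalently: while $p_{2k}^{(k')}\le (p_k^{(k')})^2$ holds for each fixed $k'$, after averaging over $k'$ one only gets $\esp{p_{2k}^{(|A_n|)}\mid Q_n}\le \esp{(p_k^{(|A_n|)})^2\mid Q_n}$, and Jensen's inequality goes the wrong way to compare this with $(\esp{p_k^{(|A_n|)}\mid Q_n})^2$.

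The fix---and this is exactly what the paper does---is to condition not on $Q_n$ but on each value $|A_n|=\ell$ separately. By Proposition~\ref{decomposition} the conditional law is uniform of cardinality $\ell$, so for each such $\ell$ the inequality $p_{2k}^{(\ell)}\le (p_k^{(\ell)})^2$ (Lemma~\ref{uniform include}(ii)) makes the $|X_n|^2$ term in the variance non-positive, and the remaining $Y_{i,n}$ terms are controlled by the $d$-small self-intersection hypothesis uniformly for $\ell\in[n^{d-\varepsilon},n^{d+\varepsilon}]$ (Lemmas~\ref{multidim uniformly uniform E and Var} and~\ref{multidim uniform concentration}). Chebyshev then gives a concentration bound for $N_n$ that is uniform in $\ell$, and summing $\Pr(|A_n|=\ell)$ over the window yields the conclusion. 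In short: do the second-moment computation \emph{after} freezing $|A_n|$, not before.
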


Note that by taking $k=1$, we have the intersection formula between a random subset and a fixed subset. In this case we do not need to worry about the self-intersection.

\begin{cor}[Random-fixed intersection formula] \label{random-fixed intersection} Let $\bs A$ be a densable sequence of permutation invariant random subsets of $\bs E$ with density $d$. Let $\bs X$ be a sequence of (fixed) subsets of $\bs E$ with density $\alpha$. If $d + \alpha \neq 1$, then the sequence of random subsets $\bs A\cap \bs X$ is densable and 
    \[\dens (\bs A\cap \bs X) = \begin{cases}
        d + \alpha - 1 & \textup{ if } d + \alpha > 1\\
        -\infty & \textup{ if } d + \alpha < 1.
        \end{cases}\]
\end{cor}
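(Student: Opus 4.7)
The plan is to deduce this corollary directly as the specialisation of Theorem \ref{multidim intersection} to $k=1$. The key observation is the natural identification $E_n^{(1)} = E_n$: the set of pairwise distinct $1$-tuples of $E_n$ is tautologically $E_n$ itself. Under this identification $\bs E^{(1)} = \bs E$, $\bs A^{(1)} = \bs A$, and any sequence $\bs X$ of subsets of $\bs E$ is automatically a sequence of subsets of $\bs E^{(1)}$, with $\bs A^{(1)} \cap \bs X = \bs A \cap \bs X$.

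Second, I would verify that the $d$-small self-intersection hypothesis of Theorem \ref{multidim intersection} is automatically satisfied when $k=1$. The hypothesis (Definition \ref{multidim condition}) requires the inequality (1) to hold for every integer $i$ in the range $1 \leq i \leq k-1$, and this index set is empty when $k=1$. Hence no restriction at all is placed on $\bs X$ beyond its densability, and the theorem applies unconditionally.

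Applying Theorem \ref{multidim intersection} with $k=1$ then yields case (i): if $d + \alpha < 1$ then a.a.s.\ $A_n \cap X_n = \vide$, so $\dens(\bs A \cap \bs X) = -\infty$; and case (ii): if $d + \alpha > 1$ then $\bs A \cap \bs X$ is densable with $\dens(\bs A \cap \bs X) = d + \alpha - 1$. This matches exactly the two cases stated in the corollary.

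Because the corollary is a pure specialisation of the main theorem, there is essentially no obstacle in its proof; the only subtlety is recognising that the self-intersection condition degenerates to a vacuous one. The one caveat worth flagging is that Theorem \ref{multidim intersection} is stated for densities in the open interval $0 < d < 1$, so if one wishes the corollary to cover the extreme cases $d \in \{0, 1\}$, short separate arguments are needed. For $d = 0$ with $\alpha < 1$, the conclusion $\dens(\bs A \cap \bs X) = -\infty$ comes from a first-moment bound via Lemma \ref{E and Var}: using permutation invariance, $\esp{|A_n \cap X_n|} = |X_n| \cdot \esp{|A_n|}/n$ which is $n^{d + \alpha - 1 + o(1)} \to 0$, and Markov's inequality finishes. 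For $d = 1$ with $\alpha > 0$ one writes $|A_n \cap X_n| = |X_n| - |X_n \setminus A_n|$ and bounds the discrepancy $|X_n \setminus A_n|$ by the same first-moment estimate applied to the complement $\bs E \setminus \bs A$, whose density is strictly less than $1$ by the hypothesis $d+\alpha \neq 1$ being non-degenerate only in the direction $\alpha = 0$.
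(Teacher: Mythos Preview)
Your main approach—specialising Theorem \ref{multidim intersection} to $k=1$ and observing that the $d$-small self-intersection condition is vacuous because the index range $1\le i\le k-1$ is empty—is exactly the paper's argument; the paper states precisely this one sentence before the corollary and gives no further proof.

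Your additional remarks on the boundary values $d\in\{0,1\}$ go beyond what the paper does. The $d=0$ sketch is fine. The $d=1$ sketch, however, is not correct as written: knowing $\dens\bs A=1$ does \emph{not} force $\dens(\bs E\setminus\bs A)<1$. For instance, if $|A_n|=n/2$ deterministically then $\dens\bs A=1$ while $\dens(\bs E\setminus\bs A)=1$ as well, so your claimed bound on the complement fails. (The sentence invoking ``$d+\alpha\neq1$ being non-degenerate only in the direction $\alpha=0$'' does not repair this.) A correct handling of $d=1$ would condition on $|A_n|$ and run the uniform-model concentration estimate directly; alternatively, since Theorem \ref{multidim intersection} is itself stated only for $0<d<1$, one may simply inherit that restriction in the corollary, which is what the paper implicitly does.
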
\quad\\

We shall first represent the expected value and the variance of the random variable $|A_n^{(k)}\cap X_n|$ by probabilities of the type $\Pr\left(\{x_1,\dots,x_r\} \subset A_n\right)$. The following result generalize Lemma \ref{E and Var}.

\begin{lem}\label{multidim E and Var} Let $\bs E$, $\bs A$ and $\bs X$ given by Theorem \ref{multidim intersection} and let $(\bs Y_i)_{0\leq i \leq k}$ be the self-intersection partition of $\bs X$. Let $x_1,\dots,x_{2k}$ be distinct $2k$ elements of $E_n$.
\begin{enumerate}
    \item $\esp{|A_n^{(k)} \cap X_n|} = |X_n|\Pr\left(\{x_1,\dots,x_k\}\subset A_n\right)$.
    \item $\Var\left(|A_n^{(k)} \cap X_n|\right) =$
    \begin{align*}
        & |X_n|^2\Big(\Pr(\{x_1,\dots,x_{2k}\}\subset A_n) - \Pr(\{x_1,\dots,x_{k}\}\subset A_n)^2 \Big)\\
    + & \sum_{i = 1}^{k}|Y_{i,n}| \Big(\Pr(\{x_1,\dots,x_{2k-i}\}\subset A_n) - \Pr(\{x_1,\dots,x_{2k}\}\subset A_n)\Big).
    \end{align*}
\end{enumerate}
\end{lem}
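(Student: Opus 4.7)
The plan is to mimic the proof of Lemma 2.8 (the one-variable formulas for $\mathbb{E}(|A|)$ and $\Var(|A|)$), writing $|A_n^{(k)}\cap X_n|$ as a sum of indicator random variables, one per $k$-tuple $x\in X_n$, and then exploiting the permutation invariance of $A_n$ so that $\Pr\bigl(S\subset A_n\bigr)$ depends only on $|S|$. The self-intersection partition of $\bs X$ is what organizes this dependence at the level of pairs.

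For (1), I would write
\[
|A_n^{(k)}\cap X_n|=\sum_{x\in X_n}\mathbbm{1}_{x\in A_n^{(k)}},
\]
and observe that for $x=(x_1,\ldots,x_k)\in X_n\subset E_n^{(k)}$ the coordinates are pairwise distinct, so $\Pr(x\in A_n^{(k)})=\Pr(\{x_1,\ldots,x_k\}\subset A_n)$. By permutation invariance of $A_n$, this probability depends only on the cardinality $k$, not on $x$; hence summing gives $|X_n|\cdot\Pr(\{x_1,\ldots,x_k\}\subset A_n)$ as claimed.

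For (2), I would compute $\mathbb{E}(|A_n^{(k)}\cap X_n|^2)$ as a double sum
\[
\mathbb{E}(|A_n^{(k)}\cap X_n|^2)=\sum_{(x,y)\in X_n^2}\Pr(x\in A_n^{(k)},\,y\in A_n^{(k)}),
\]
and split this sum according to the self-intersection partition $X_n^2=\bigsqcup_{i=0}^k Y_{i,n}$. If $(x,y)\in Y_{i,n}$, then $\{x_1,\ldots,x_k\}\cup\{y_1,\ldots,y_k\}$ has exactly $2k-i$ elements, so again by permutation invariance
\[
\Pr(x\in A_n^{(k)},y\in A_n^{(k)})=\Pr(\{x_1,\ldots,x_{2k-i}\}\subset A_n),
\]
independent of the specific pair $(x,y)\in Y_{i,n}$. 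Writing $p_r:=\Pr(\{x_1,\ldots,x_r\}\subset A_n)$, this yields
\[
\mathbb{E}(|A_n^{(k)}\cap X_n|^2)=\sum_{i=0}^k|Y_{i,n}|\,p_{2k-i}.
\]

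Finally, I would subtract $\mathbb{E}(|A_n^{(k)}\cap X_n|)^2=|X_n|^2 p_k^2$ from (1), and use the identity $|Y_{0,n}|=|X_n|^2-\sum_{i=1}^k|Y_{i,n}|$ to replace the $i=0$ term by $|X_n|^2 p_{2k}-\sum_{i=1}^{k}|Y_{i,n}|\,p_{2k}$, after which a direct regrouping gives
\[
\Var(|A_n^{(k)}\cap X_n|)=|X_n|^2(p_{2k}-p_k^2)+\sum_{i=1}^{k}|Y_{i,n}|(p_{2k-i}-p_{2k}),
\]
which is the stated formula. There is no real obstacle here: the argument is pure bookkeeping, and the only subtle point is making sure the $i=0$ term is absorbed cleanly so that the sum in the variance starts at $i=1$, which is why the self-intersection partition is indexed that way.
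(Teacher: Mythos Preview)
Your proposal is correct and follows essentially the same approach as the paper: write $|A_n^{(k)}\cap X_n|$ as a sum of indicators over $X_n$, use permutation invariance so that $\Pr(\{x_1,\dots,x_r\}\subset A_n)$ depends only on $r$, expand the second moment via the self-intersection partition, and then substitute $|Y_{0,n}|=|X_n|^2-\sum_{i=1}^k|Y_{i,n}|$ to regroup into the stated variance formula. The bookkeeping and the order of the final substitution match the paper's argument almost line for line.
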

\begin{proof}\quad
    \begin{enumerate}
        \item As $A_n$ is permutation invariant, the probability $\Pr(\{x_1,\dots,x_k\}\subset A_n)$ does not depend on the choice of $\{x_1,\dots,x_k\}$. So
        \[\begin{split} \mathbb{E}(|A_n^{(k)} \cap X_n|)& =  \mathbb{E}\left(\sum_{x\in X_n}\mathbbm{1}_{x\in A_n^{(k)}}\right) = \sum_{x\in X_n}\Pr\left(x\in A_n^{(k)}\right)\\
        & = |X_n|\Pr\left(\{x_1,\dots,x_k\}\subset A_n\right).
        \end{split}\]
        \item By the same reason $\Pr(\{x_1,\dots,x_r\}\subset A_n)$ does not depend on the choice of $\{x_1,\dots,x_r\}$ for all $r\in\mathbb{N}$. Note that 
        \[\Var(|A_n^{(k)} \cap X_n|) = \esp{|A_n^{(k)} \cap X_n|^2} - \esp{|A_n^{(k)} \cap X_n|}^2.\] 
        If $(x,y) \in Y_{i,n}$, then there are $2k-i$ different elements of $E_n$ in $x$ and $y$, so $\Pr\left(x,y\in A_n^{(k)}\right) = \Pr(\{x_1,\dots,x_{2k-i}\}\subset A_n)$. Hence
        \begin{align*} \esp{|A_n^{(k)} \cap X_n|^2} & = \esp{\left(\sum_{x\in X_n}\mathbbm{1}_{x\in A_n^{(k)}}\right)^2} = \sum_{x,y\in X_n}\Pr\left(x,y\in A_n^{(k)}\right)\\
        & =  \sum_{i = 0}^{k}\sum_{(x,y)\in Y_{i,n}}\Pr\left(x,y\in A_n^{(k)}\right)\\
        & =  \sum_{i = 0}^{k}|Y_{i,n}|\Pr(\{x_1,\dots,x_{2k-i}\}\subset A_n).\end{align*}
        Recall that $|Y_{0,n}| = |X_n^2| - \sum_{i=1}^k|Y_{i,n}|$. The above can be rewrite as
        \begin{align*}
        \esp{|A_n^{(k)} \cap X_n|^2} & = \left(|X_n^2| - \sum_{i = 1}^{k}|Y_{i,n}|\right) \Pr(\{x_1,\dots,x_{2k}\}\subset A_n)\\
        & \quad + \sum_{i = 1}^{k}|Y_{i,n}| \Pr(\{x_1,\dots,x_{2k-i}\}\subset A_n)\\
        & = |X_n^2|\Pr(\{x_1,\dots,x_{2k}\}\subset A_n)\\
        & \quad + \sum_{i=1}^k\Big(\Pr(\{x_1,\dots,x_{2k-i}\}\subset A_n)-\Pr(\{x_1,\dots,x_{2k}\}\subset A_n)\Big).
        \end{align*}
        Combined with $\esp{|A_n^{(k)} \cap X_n|}^2 = |X_n|^2\Pr\left(\{x_1,\dots,x_k\}\subset A_n\right)^2$, we have
        \begin{align*} \Var(|A_n^{(k)} \cap X_n|) & =  |X_n|^2\Big(\Pr(\{x_1,\dots,x_{2k}\}\subset A_n) - \Pr(\{x_1,\dots,x_{k}\}\subset A_n)^2 \Big) \\
        & + \sum_{i = 1}^{k}|Y_{i,n}| \Big(\Pr(\{x_1,\dots,x_{2k-i}\}\subset A_n) - \Pr(\{x_1,\dots,x_{2k}\}\subset A_n\Big).
        \end{align*}
    \end{enumerate}
\end{proof}

Remark that Lemma \ref{E and Var} is a special case Lemma \ref{multidim E and Var}, by taking $k = 1$ and $X_n = E_n$. Note that if $k = 1$, then $\bs X^2 = \bs Y_0 \sqcup \bs Y_1$ and there is no need to introduce condition (\ref{multidim condition}).

\subsection{The Bernoulli density model}
Let $\bs X$ be a fixed sequence of subsets of $\bs E^{(k)}$ with density $\alpha$. In this subsection, we study the intersection $\bs A^{(k)}\cap \bs X$ in the case that $\bs A$ is a sequence of Bernoulli random subsets of $\bs E$ with density $0<d<1$. Note that for any integer $r\in \mathbb{N}$ and any distinct elements $x_1,\dots,x_r$ in $E_n$, we have
\begin{align*}
    \Pr\left(\{x_1,\dots,x_r\}\subset A_n\right) & = \Pr\left(\{x_1\in A_n\},\dots, \{x_r\in A_n\}\right)\\  & = \prod_{i=1}^r\Pr(x_i\in A_n) = n^{r(d-1)}
\end{align*}
by independence of the events $\Pr(x_i\in A_n)$. Because of this equality, the proof of Theorem \ref{multidim intersection} for the Bernoulli density model is much simpler.

\begin{proof}[\textbf{Proof of Theorem \ref{multidim intersection} for Bernoulli density model}] \quad
\begin{enumerate}[(i)]
    \item Suppose that $\alpha+d<1$. To prove that $\dens(\bs A^{(k)}\cap \bs X) = -\infty$, it is enough to prove that $\Pr\left(A_n^{(k)}\cap X_n \neq \vide\right)\tendvers 0$.
    
    By Markov's inequality and Lemma \ref{multidim E and Var}
    \[\begin{split}\Pr\left(A_n^{(k)}\cap X_n \neq \vide\right) & = \Pr\left(|A_n^{(k)}\cap X_n| \geq 1\right)\\
    & \leq \esp{|A_n^{(k)}\cap X_n|} = |X_n|\Pr\left(\{x_1,\dots,x_k\}\subset A_n\right)\\
    & \leq n^{k\alpha+o(1)}n^{k(d-1)}\\
    & \leq n^{k(\alpha + d -1 )+o(1)} \tendvers 0
    \end{split}\]
    as $\alpha+d-1<0$. \qed
    
    \item Suppose that $\alpha+d>1$. To simplify the notation, denote $B_n =  A_n^{(k)}\cap X_n$ and $\bs B = \bs X\cap \bs A^{(k)}$. 
    
    We shall prove that $\dens \bs B = \alpha+d-1$. Let $\varepsilon>0$ be an arbitrary small real number. We need prove that a.a.s.
    \[n^{k(\alpha+d-1-\varepsilon)} \leq |B_n| \leq n^{k(\alpha+d-1+\varepsilon)}.\]
    
    By Lemma \ref{multidim E and Var} 
    \begin{align*} \esp{|B_n|} & = |X_n|\Pr\left(\{x_1,\dots,x_k\}\subset A_n\right) = |X_n|n^{k(d-1)} \\ 
     &= n^{k(\alpha + d -1)+o(1)}.
    \end{align*}
    For $n$ large enough
    \[n^{k(\alpha+d-1-\varepsilon)} < \frac{1}{2}n^{k(\alpha+d-1)+o(1)} < \frac{3}{2}n^{k(\alpha+d-1)+o(1)} < n^{k(\alpha+d-1+\varepsilon)}.\]

    So it is enough to prove that a.a.s.
    \[\frac{1}{2}\esp{|B_n|} < |B_n| < \frac{3}{2}\esp{|B_n|},\]
    which means that a.a.s.
    \[\left||B_n|-\esp{|B_n|}\right| < \frac{1}{2}\esp{|B_n|}.\]
   
    By Chebyshev's inequality
    \[\Pr\left(\left||B_n|-\esp{|B_n|}\right| \geq \frac{1}{2}\esp{|B_n|}\right) \leq \frac{4\Var\left(|B_n|\right)}{\esp{|B_n|}^2}.\]
    We shall prove that this quantity goes to zero when $n$ goes to infinity. By Lemma \ref{multidim E and Var}
    \begin{align*} \Var(|B_n|) & =  |X_n|^2\Big(\Pr(\{x_1,\dots,x_{2k}\}\subset A_n) - \Pr(\{x_1,\dots,x_{k}\}\subset A_n)^2 \Big) \\
    & \quad + \sum_{i = 1}^{k}|Y_{i,n}| \Big(\Pr(\{x_1,\dots,x_{2k-i}\}\subset A_n) - \Pr(\{x_1,\dots,x_{2k}\}\subset A_n\Big)\\
    & = \sum_{i = 1}^{k}|Y_{i,n}| \left(n^{(2k-i)(d-1)} - n^{2k(d-1)}\right)\\
    & \leq \sum_{i = 1}^{k}|Y_{i,n}| n^{(2k-i)(d-1)}
    \end{align*}
    Note that $n^{(2k-i)(d-1)} > n^{2k(d-1)}$ because $d<1$. By the $d$-small self-intersection condition (\ref{multidim condition}), there exists $\varepsilon >0$ such that for all $1\leq i \leq k$
    \[|Y_{i,n}|\leq n^{2k\left(\alpha+(d-1)\frac{i}{2k}\right)-\varepsilon}\]
    for $n$ large enough.\\
    Hence for $n$ large enough
    \[\Var(|B_n|)\leq kn^{2k(\alpha+d-1)-\varepsilon}.\]
    Recall that $\esp{|B_n|}^2 = n^{2k(\alpha + d -1)+o(1)}$, so
    \[\frac{4\Var\left(|B_n|\right)}{\esp{|B_n|}^2} \tendvers 0.\]
    \end{enumerate}
\end{proof}

\subsection{The uniform density model}
Note that when $\bs A$ is a sequence of Bernoulli random subsets with density $d$, we have
    \[\Pr(\{x_1,\dots,x_r\}\subset A_n) = n^{r(d-1)},\]
and consequently
    \[\Pr(\{x_1,\dots,x_k\}\subset A_n)^2 - \Pr(\{x_1,\dots,x_{2k}\}\subset A_n) = 0.\]
In order to proceed the same proof, we shall estimate these two quantities for the uniform density model.

\begin{lem}\label{uniform include} Let $\bs A$ be a sequence of uniform random subsets of $\bs E$ with density $d$. Let $0<\varepsilon<d$ be a small real number and let $k\geq 1$ be an integer. If $n\geq (1+2k)^{\frac{1}{\varepsilon}}$, then
\begin{enumerate}[(i)]
    \item For all integers $1\leq r \leq 2k$
    \[n^{r(d-1-\varepsilon)} \leq \Pr(\{x_1,\dots,x_r\}\subset A_n)\leq n^{r(d-1+\varepsilon)}.\]
    \item $0\leq\Pr(\{x_1,\dots,x_k\}\subset A_n)^2 - \Pr(\{x_1,\dots,x_{2k}\}\subset A_n) \leq n^{2k(d-1+\varepsilon)-d}$
\end{enumerate}
\end{lem}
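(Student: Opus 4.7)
The plan is to exploit the explicit formula
\[
\Pr(\{x_1,\dots,x_r\}\subset A_n)=\frac{\binom{n-r}{K-r}}{\binom{n}{K}}=\prod_{i=0}^{r-1}\frac{K-i}{n-i},
\]
where $K:=\lfloor n^d\rfloor$, and reduce both parts to elementary estimates on falling-factorial ratios.

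For (i), I would bound each of the $r\leq 2k$ factors $\frac{K-i}{n-i}$ with $0\leq i\leq 2k-1$. Using $n^d-1\leq K\leq n^d$, a factor lies between $\frac{n^d-2k}{n}$ and $\frac{n^d}{n-2k+1}$. The hypothesis $n\geq (1+2k)^{1/\varepsilon}$, i.e. $n^\varepsilon\geq 1+2k$, is tailored precisely to yield both $\frac{n}{n-2k+1}\leq n^\varepsilon$ and, using $\varepsilon<d$, $\frac{n^d}{n^d-2k}\leq n^\varepsilon$. Each factor therefore lies in $[n^{d-1-\varepsilon},\,n^{d-1+\varepsilon}]$, and taking the product over $r$ factors gives the two-sided bound.

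For (ii), I would factor the quantity of interest as
\[
\Pr(\{x_1,\dots,x_k\}\subset A_n)^2-\Pr(\{x_1,\dots,x_{2k}\}\subset A_n)=\frac{(K)_k}{(n)_k}\left[\frac{(K)_k}{(n)_k}-\frac{(K-k)_k}{(n-k)_k}\right],
\]
where $(x)_r:=x(x-1)\cdots(x-r+1)$ denotes the falling factorial; this uses $(K)_{2k}=(K)_k(K-k)_k$ and similarly for $n$. A short computation gives
\[
\frac{K-k-i}{n-k-i}=\frac{K-i}{n-i}\,(1-\delta_i),\qquad \delta_i:=\frac{k(n-K)}{(K-i)(n-k-i)}\in[0,1],
\]
so the bracket equals $\frac{(K)_k}{(n)_k}\bigl(1-\prod_{i=0}^{k-1}(1-\delta_i)\bigr)$, which is non-negative (proving the left inequality). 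For the right inequality, the Weierstrass inequality yields $1-\prod(1-\delta_i)\leq\sum\delta_i$, and the crude bounds $K-i\geq n^d/2$, $n-k-i\geq n/2$, $n-K\leq n$ (valid under the hypothesis on $n$) give $\sum\delta_i\leq 4k^2/n^d$. Combined with the \emph{sharper} elementary inequality $\frac{(K)_k}{(n)_k}\leq (K/n)^k\leq n^{k(d-1)}$ — which holds because each factor $\frac{K-i}{n-i}$ is non-increasing in $i$ — the whole expression is at most $\frac{4k^2}{n^d}\cdot n^{2k(d-1)}$; finally $n^{2k\varepsilon}\geq(1+2k)^{2k}\geq 4k^2$ absorbs the constant to yield $n^{2k(d-1+\varepsilon)-d}$.

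The main delicacy is in part (ii): reusing the looser bound $\frac{(K)_k}{(n)_k}\leq n^{k(d-1+\varepsilon)}$ from part (i) would throw away an extra factor $n^{2k\varepsilon}$, exactly the budget needed to absorb the combinatorial constant $4k^2$. It is therefore essential to invoke the tighter estimate $\frac{(K)_k}{(n)_k}\leq (K/n)^k$ and spend all of $n^{2k\varepsilon}$ on the $4k^2$ term, rather than splitting it between the two occurrences of $\frac{(K)_k}{(n)_k}$.
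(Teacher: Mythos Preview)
Your argument is correct and follows the same skeleton as the paper: the explicit falling-factorial formula for $\Pr(\{x_1,\dots,x_r\}\subset A_n)$, the termwise bounds in (i), and in (ii) the same factorization pulling out $\frac{(K)_k}{(n)_k}$ together with the termwise monotonicity $\frac{K-i}{n-i}\geq\frac{K-k-i}{n-k-i}$ for non-negativity.

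The only substantive difference is in how the upper bound in (ii) is extracted. The paper replaces the bracket by $\frac{n^{dk}-(n^d-2k)^k}{(n-k)^k}$, expands $(n^d-2k)^k$ by the binomial theorem to get a factor $(1+2k)^k n^{d(k-1)}$, and then absorbs the constant by writing it as $(\sqrt{1+2k})^{2k}$ and using $n-k\geq n^{1-\varepsilon}\sqrt{1+2k}$; thus the $\varepsilon$-slack is spread evenly over all $2k$ factors. You instead keep the multiplicative structure $\prod(1-\delta_i)$, apply the Weierstrass inequality, and combine with the sharper bound $\frac{(K)_k}{(n)_k}\leq (K/n)^k\leq n^{k(d-1)}$, spending the whole $n^{2k\varepsilon}$ budget on the constant $4k^2$. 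Both routes land at the same exponent; yours is arguably cleaner and your remark about the ``delicacy'' is well taken.

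One small caveat: the crude bound $n-k-i\geq n/2$ requires $n\geq 4k-2$, which does \emph{not} follow from $n\geq(1+2k)^{1/\varepsilon}$ for every $0<\varepsilon<d<1$ (take, e.g., $k=3$, $\varepsilon=0.95$). The fix is immediate and is what the paper uses: from $n^\varepsilon\geq 1+2k$ one gets $n-2k\geq n^{1-\varepsilon}$ and $n^d-2k\geq n^{d-\varepsilon}$, hence $\delta_i\leq k\,n^{2\varepsilon-d}$ and $\sum_i\delta_i\leq k^2 n^{2\varepsilon-d}$. Then $k^2 n^{2\varepsilon}\leq n^{2k\varepsilon}$ (since $n^{(2k-2)\varepsilon}\geq(1+2k)^{2k-2}\geq k^2$ for all $k\geq 1$) still absorbs the constant and yields the stated bound.
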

\begin{proof} Recall that $|E_n| = n$ and that $A_n$ is uniform on all subsets of $E_n$ of cardinality $\flr{n^d}$.
\begin{enumerate}[(i)]
    \item Note that $\flr{n^d}\geq n^\varepsilon-1\geq 2k \geq r$. Among all subsets of $E_n$ of cardinality $\flr{n^d}$, there are $\binom{n-r}{\flr{n^d}-r}$ subsets that include $\{x_1,\dots,x_r\}$. So
    \[\Pr(\{x_1,\dots,x_r\}\subset A_n) = \frac{\binom{n-r}{\flr{n^d}-r}}{\binom{n}{\flr{n^d}}} = \frac{\flr{n^d}\dots(\flr{n^d}-r+1)}{n\dots(n-r-1)}.\]
    We estimate that
    \[\left(\frac{n^d-r}{n}\right)^r \leq \frac{\flr{n^d}\dots(\flr{n^d}-r+1)}{n\dots(n-r-1)} \leq \left(\frac{n^d}{n-r}\right)^r.\]
    The condition $n \geq (1+2k)^{\frac{1}{\varepsilon}} \geq (1+r)^{\frac{1}{\varepsilon}}$ implies
    \[\begin{cases}
        n \geq n^{1-\varepsilon}(1+r)\\
        n^d \geq n^{d-\varepsilon}(1+r),
    \end{cases}\]
    so
    \[\begin{cases}
        n^{1-\varepsilon} \leq n-r\\
        n^{d-\varepsilon} \leq n^d-r.
    \end{cases}\]
    Hence
    \[\left(n^{d-1-\varepsilon}\right)^r \leq \frac{\flr{n^d}\dots(\flr{n^d}-r+1)}{n\dots(n-r-1)} \leq \left(n^{d-1+\varepsilon}\right)^r.\]\qed
    
    \item By the same argument 
    \begin{align*}&\Pr(\{x_1,\dots,x_k\}\subset A_n)^2- \Pr(\{x_1,\dots,x_{2k}\}\subset A_n) \\
    = & \left(\frac{\flr{n^d}\dots(\flr{n^d}-k+1)}{n\dots(n-k-1)}\right)^2 - \frac{\flr{n^d}\dots(\flr{n^d}-2k+1)}{n\dots(n-2k-1)}\\
    = & \left(\frac{\flr{n^d}\dots(\flr{n^d}-k+1)}{n\dots(n-k-1)}\right)\left(\frac{\flr{n^d}\dots(\flr{n^d}-k+1)}{n\dots(n-k-1)} - \frac{(\flr{n^d}-k)\dots(\flr{n^d}-2k+1)}{(n-k)\dots(n-2k-1)} \right).
    \end{align*}
    This quantity is positive because 
    $\frac{\flr{n^d}-i}{n-i}\geq \frac{\flr{n^d}-i-k}{n-i-k}$
    for every $0\leq i \leq k-1$.
    
    Now we estimate that
    \begin{align*}&\Pr(\{x_1,\dots,x_k\}\subset A_n)^2- \Pr(\{x_1,\dots,x_{2k}\}\subset A_n) \\
    \leq & \left(\frac{n^d}{n-k}\right)^k\left(\frac{n^{dk}}{(n-k)^k} - \frac{(n^d-2k)^k}{(n-k)^k}\right) \\ 
    \leq & \frac{n^{dk}}{(n-k)^{2k}}\left(n^{dk} - \sum_{i=0}^k\binom{k}{i}n^{d(k-i)}(-2k)^i\right) \\
    \leq & \frac{n^{dk}}{(n-k)^{2k}}(1+2k)^kn^{d(k-1)} = \left(\frac{n^d\sqrt{1+2k}}{n-k}\right)^{2k}n^{-d}.
    \end{align*}
    As $n^\varepsilon\geq 1+2k$, we have
    \begin{align*}
        n-k & \geq n^{1-\varepsilon}(1+2k)-k\\
        & \geq n^{1-\varepsilon}(1+k)\\
        & \geq n^{1-\varepsilon}\sqrt{1+2k}.
    \end{align*}
    Hence $\Pr(\{x_1,\dots,x_k\}\subset A_n)^2 - \Pr(\{x_1,\dots,x_{2k}\}\subset A_n) \leq n^{2k(d-1+\varepsilon)-d}$. 
\end{enumerate}
\end{proof}

\begin{nota}Let $\bs X$ be a sequence of subsets of $\bs E^{(k)}$ with density $\alpha$ and let $(\bs Y_i)_{0\leq i \leq k}$ be its self-intersection partition. Denote the density difference
\[\varepsilon_0(d) = \min_{1\leq i \leq k}\left\{\alpha + (d-1)\frac{i}{2k}-\overline{\dens} \bs Y_i\right\}.\]
\end{nota}
Remark that $\bs X$ has $d$-small self-intersection if and only if $\varepsilon_0(d)>0$. In addition, for every small real number $0<\varepsilon<\frac{\varepsilon_0(d)}{10}$ there exists $n_\varepsilon\in \mathbb{N}$ such that for all $n\geq n_\varepsilon$ we have, simultaneously for all $1\leq i \leq k$,
\[|Y_{n,i}|\leq n^{2k\left(\alpha+(d-1)\frac{i}{2k}-10\varepsilon\right)} = n^{2k\alpha + (d-1)i-2k\times 10\varepsilon}.\]
By densability of $\bs X$, we can choose $n_\varepsilon$ such that at the same time 
\[n^{k(\alpha-\varepsilon)}\leq |X_n|\leq n^{k(\alpha+\varepsilon)}.\]

Combined with Lemma \ref{uniform include}, we can now estimate the expected value and the variance of $|A_n^{(k)}\cap X_n|$ for the uniform density model.
\begin{lem}\label{multidim uniform E and Var} Let $\bs A$ be a sequence of uniform random subsets of $\bs E$ with density $d$. Let $\bs X$ be a sequence of subsets of $\bs E^{(k)}$ with density $\alpha$. Let $0<\varepsilon < \min\{\frac{\varepsilon_0(d)}{10},d\}$ be a small real number. If $n\geq \max\left\{n_\varepsilon,(1+2k)^{\frac{1}{\varepsilon}}\right\}$, then
\begin{enumerate}[$(i)$]
    \item $n^{k(\alpha+d-1-2\varepsilon)} \leq \esp{|A_n^{(k)}\cap X_n|} \leq n^{k(\alpha+d-1+2\varepsilon)}$.
    \item If in addition $\alpha+d-1>2\varepsilon>0$ and $\bs X$ has $d$-small self-intersection, then\\ $\Var\left(|A_n^{(k)}\cap X_n|\right)\leq kn^{2k(\alpha+d-1-9\varepsilon)}.$
\end{enumerate}
\end{lem}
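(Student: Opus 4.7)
The plan is to deduce both estimates directly from the formulas of Lemma \ref{multidim E and Var}, which express $\esp{|A_n^{(k)}\cap X_n|}$ and $\Var(|A_n^{(k)}\cap X_n|)$ in terms of the probabilities $\Pr(\{x_1,\dots,x_r\}\subset A_n)$, combined with the estimates on these probabilities provided by Lemma \ref{uniform include} for the uniform density model.

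For $(i)$, I would simply apply Lemma \ref{multidim E and Var} (1) to write $\esp{|A_n^{(k)}\cap X_n|} = |X_n|\Pr(\{x_1,\dots,x_k\}\subset A_n)$. The choice of $n\geq n_\varepsilon$ gives $n^{k(\alpha-\varepsilon)}\leq |X_n|\leq n^{k(\alpha+\varepsilon)}$ from the densability of $\bs X$, while Lemma \ref{uniform include} (i) with $r=k$ and $n\geq (1+2k)^{1/\varepsilon}$ yields $n^{k(d-1-\varepsilon)}\leq \Pr(\{x_1,\dots,x_k\}\subset A_n)\leq n^{k(d-1+\varepsilon)}$. Multiplying the two bounds produces the desired $n^{k(\alpha+d-1\pm 2\varepsilon)}$ estimate.

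For $(ii)$, the first term in the variance formula of Lemma \ref{multidim E and Var} (2) is $|X_n|^2\bigl(\Pr(\{x_1,\dots,x_{2k}\}\subset A_n)-\Pr(\{x_1,\dots,x_{k}\}\subset A_n)^2\bigr)$, which is non-positive by Lemma \ref{uniform include} (ii), so I can discard it and bound
\[\Var(|A_n^{(k)}\cap X_n|)\leq \sum_{i=1}^{k}|Y_{i,n}|\Pr(\{x_1,\dots,x_{2k-i}\}\subset A_n).\]
For each $i\in\{1,\dots,k\}$, the $d$-small self-intersection hypothesis, together with the choice of $n_\varepsilon$, gives $|Y_{i,n}|\leq n^{2k\alpha+(d-1)i-20k\varepsilon}$, and Lemma \ref{uniform include} (i) with $r=2k-i$ gives $\Pr(\{x_1,\dots,x_{2k-i}\}\subset A_n)\leq n^{(2k-i)(d-1+\varepsilon)}$.

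The key arithmetic observation is that the exponents telescope:
\[2k\alpha+(d-1)i+(2k-i)(d-1)+(2k-i)\varepsilon-20k\varepsilon = 2k(\alpha+d-1)+(2k-i)\varepsilon-20k\varepsilon,\]
since the $i$-dependent $(d-1)$-contributions collapse to $2k(d-1)$. Because $i\geq 1$, we have $(2k-i)\varepsilon\leq (2k-1)\varepsilon<2k\varepsilon$, and the exponent is strictly less than $2k(\alpha+d-1)-18k\varepsilon = 2k(\alpha+d-1-9\varepsilon)$. Summing the $k$ terms yields the required $kn^{2k(\alpha+d-1-9\varepsilon)}$ bound. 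No step is a real obstacle: the whole argument is a careful bookkeeping of exponents, and the $10\varepsilon$ slack built into the definition of $n_\varepsilon$ via $\varepsilon_0(d)$ is precisely what absorbs the $\varepsilon$ cost coming from Lemma \ref{uniform include}.
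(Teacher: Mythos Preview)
Your proof is correct and follows essentially the same approach as the paper: both use Lemma \ref{multidim E and Var} to express the expectation and variance, invoke Lemma \ref{uniform include} (ii) to discard the first variance term as non-positive, and then bound each remaining summand via Lemma \ref{uniform include} (i) together with the $|Y_{i,n}|$ estimate coming from the definition of $n_\varepsilon$. Your exponent bookkeeping, including the observation that $(2k-i)\varepsilon\leq 2k\varepsilon$ absorbs into the $20k\varepsilon$ slack to give the $-9\varepsilon$ in the final bound, matches the paper's computation.
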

\begin{proof}\quad
\begin{enumerate}[$(i)$]
    \item By Lemma \ref{multidim E and Var}
    \[\esp{|A_n^{(k)}\cap X_n|} = |X_n|\Pr\left(\{x_1,\dots,x_k\}\subset A_n\right).\]
    So by Lemma \ref{uniform include} and $n^{k(\alpha-\varepsilon)}\leq |X_n|\leq n^{k(\alpha+\varepsilon)}$ we have
    \[n^{k(\alpha-\varepsilon)}n^{k(d-1-\varepsilon)} \leq \esp{|A_n^{(k)}\cap X_n|} \leq n^{k(\alpha+\varepsilon)}n^{k(d-1+\varepsilon)}.\]\qed
    \item By Lemma \ref{uniform include} (ii) $\Pr(\{x_1,\dots,x_{2k}\}\subset A_n) - \Pr(\{x_1,\dots,x_{k}\}\subset A_n)^2 \leq 0$. Apply Lemma \ref{multidim E and Var}, eliminate negative parts.
    \begin{align*} \Var\left(|A_n^{(k)}\cap X_n|\right) & =  |X_n|^2\Big(\Pr(\{x_1,\dots,x_{2k}\}\subset A_n) - \Pr(\{x_1,\dots,x_{k}\}\subset A_n)^2\Big) \\
    & + \sum_{i=1}^{k}|Y_{i,n}|\Big(\Pr(\{x_1,\dots,x_{2k-i}\}\subset A_n) - \Pr(\{x_1,\dots,x_{2k}\}\subset A_n)\Big)\\
    & \leq \sum_{i=1}^{k}|Y_{i,n}|\Pr(\{x_1,\dots,x_{2k-i}\}\subset A_n).
    \end{align*}
    By Lemma \ref{uniform include} (i) and $|Y_{i,n}|\leq n^{2k\alpha+i(d-1)+2k\times10\varepsilon}$
    \begin{align*}
    \Var\left(|A_n^{(k)}\cap X_n|\right) & \leq \sum_{i=1}^{k}n^{2k\alpha+i(d-1)-2k\times10\varepsilon} n^{(2k-i)(d-1+\varepsilon)}\\
    & \leq kn^{2k(\alpha+d-1-9\varepsilon)}.
    \end{align*}
\end{enumerate}
\end{proof}

\begin{proof}[\textbf{Proof of Theorem \ref{multidim intersection} for uniform density model}]\quad
\begin{enumerate}[$(i)$]
    \item Suppose that $\alpha+d<1$. We shall prove that $\Pr\left(A_n^{(k)}\cap X_n \neq \vide\right)\tendvers 0$.
    
    Let $\varepsilon>0$ such that
    \[\varepsilon<\min\left\{\frac{1-d-\alpha}{2}, \frac{\varepsilon_0(d)}{10}, d\right\}.\]
    
    By Markov's inequality and Lemma \ref{multidim uniform E and Var}. If $n \geq \max\{n_\varepsilon,(1+2k)^{\frac{1}{\varepsilon}}\}$, then
    \[\begin{split}\Pr\left(A_n^{(k)}\cap X_n \neq \vide\right) & = \Pr\left(|A_n^{(k)}\cap X_n| \geq 1\right)\\
    & \leq \esp{|A_n^{(k)}\cap X_n|}\\
    & \leq n^{k(\alpha+d-1+2\varepsilon)}\tendvers 0.
    \end{split}\]\qed
    
    \item Suppose that $\alpha+d>1$. Denote $B_n =  A_n^{(k)}\cap X_n$.
    
    Let $\varepsilon>0$ be an arbitrary small number, with     \[\varepsilon<\min\left\{\frac{\alpha+d-1}{3}, \frac{\varepsilon_0(d)}{10}, d\right\}.\]

    We shall prove that a.a.s.
    \[n^{k(\alpha+d-1-3\varepsilon)} \leq |B_n| \leq n^{k(\alpha+d-1+3\varepsilon)}.\]
    
    By Lemma \ref{multidim uniform E and Var}, if $n \geq \max\{n_\varepsilon,(1+2k)^{\frac{1}{\varepsilon}}\}$, then
    \begin{align*} n^{k(\alpha+d-1-2\varepsilon)} \leq \esp{|B_n|} \leq n^{k(\alpha+d-1+2\varepsilon)}.
    \end{align*}
    In addition, if $n\geq 2^{\frac{1}{k\varepsilon}}$, we have
    \[n^{k(\alpha+d-1-3\varepsilon)} \leq \frac{1}{2}n^{k(\alpha+d-1-2\varepsilon)} \leq \frac{1}{2}\esp{|B_n|}\]
    and 
    \[\frac{3}{2}\esp{|B_n|} \leq \frac{3}{2}n^{k(\alpha+d-1+2\varepsilon)} \leq n^{k(\alpha+d-1+3\varepsilon)}.\]

    So it is enough to prove that a.a.s.
    \[\left||B_n|-\esp{|B_n|}\right| \leq \frac{1}{2}\esp{|B_n|}.\]
   
    By Chebyshev's inequality
    \[\Pr\left(\left||B_n|-\esp{|B_n|}\right| > \frac{1}{2}\esp{|B_n|}\right) \leq \frac{4\Var\left(|B_n|\right)}{\esp{|B_n|}^2}.\]

    Combined with Lemma \ref{multidim uniform E and Var}, if $n\geq \max\left\{ n_\varepsilon, (1+2k)^{\frac{1}{\varepsilon}}, 2^{\frac{1}{k\varepsilon}}\right\}$, then
    \[\frac{4\Var\left(|B_n|\right)}{\esp{|B_n|}^2}\leq \frac{4kn^{2k(\alpha+d-1-9\varepsilon)}}{n^{2k(\alpha+d-1-2\varepsilon)}} \leq \frac{4k}{n^{14k\varepsilon}} \tendvers 0.\]
\end{enumerate}
\end{proof}

\subsection{The general model (densable and permutation invariant)}
Let $\bs X$ be a sequence of subsets of $\bs E^{(k)}$ having the $d$-small intersection condition. Recall that $\varepsilon_0(d) = \min_{1\leq i \leq k}\left\{\alpha + (d-1)\frac{i}{2k}-\overline{\dens} \bs Y_i\right\} > 0$. Note that if $d'<d$ then $\varepsilon(d')<\varepsilon(d)$.

In order to apply lemma \ref{multidim uniform E and Var} in a small interval $[d-\varepsilon,d+\varepsilon]$, we choose $0<\varepsilon<\min\left\{ \frac{\varepsilon_0(d)}{20}, \frac{d}{2}\right\}$ so that $\varepsilon < \min\left\{ \frac{\varepsilon_0(d-\varepsilon)}{10}, d-\varepsilon \right\} \leq \min\left\{\frac{\varepsilon_0(d')}{10},d'\right\}$ for every $d'\in [d-\varepsilon,d+\varepsilon]$.

By the definition of $\varepsilon_0$ and the densability of $\bs X$, we choose again $n_\varepsilon\in \mathbb{N}$ such that for all $n\geq n_\varepsilon$
\[|Y_{n,i}|\leq n^{2k\alpha + (d-1)i-2k\times 20\varepsilon} \leq n^{2k\alpha + (d'-1)i-2k\times 10\varepsilon}\quad \forall 1\leq i \leq k\]
and
\[n^{k(\alpha-\varepsilon)}\leq |X_n|\leq n^{k(\alpha+\varepsilon)}.\]

\begin{lem}\label{multidim uniformly uniform E and Var} 
Let $0 < \varepsilon < \min\left\{ \frac{\varepsilon_0(d)}{20}, \frac{d}{2} \right\}$ be a small real number. Let $\bs A$ be a sequence of uniform random subsets of $\bs E$ with density $d'\in [d-\varepsilon,d+\varepsilon]$. Let $\bs X$ be a sequence of subsets of $\bs E^{(k)}$ with density $\alpha$. If $n\geq \max\left\{n_\varepsilon,(1+2k)^{\frac{1}{\varepsilon}}\right\}$, then
\begin{enumerate}[(i)]
    \item $n^{k(\alpha + d -1 - 3\varepsilon)} \leq \esp{|A_n^{(k)}\cap  X_n|} \leq n^{k(\alpha + d -1 + 3\varepsilon)}$.
    \item If in addition $\alpha+d-1>3\varepsilon>0$ and $\bs X$ has $d$-small self-intersection, then\\ $\Var(|A_n^{(k)}\cap X_n|)\leq kn^{2k(\alpha+d-1-8\varepsilon)}$.
\end{enumerate}
\end{lem}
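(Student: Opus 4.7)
The plan is to invoke Lemma \ref{multidim uniform E and Var} at density $d'$ and parameter $\varepsilon$, and then absorb the gap $|d' - d| \leq \varepsilon$ into the final exponent at the cost of one additional $\varepsilon$. The paragraph preceding the statement arranges exactly what is needed: the choice $\varepsilon < \min\{\varepsilon_0(d)/20, d/2\}$ forces $\varepsilon < \min\{\varepsilon_0(d')/10, d'\}$ throughout the interval (a factor-of-two loss, since the coefficient $i/(2k) \leq 1/2$ controls how fast $\varepsilon_0(d')$ degrades as $d'$ moves), and the threshold $n_\varepsilon$ is chosen so that the $10\varepsilon$ self-intersection buffer used in the proof of Lemma \ref{multidim uniform E and Var} is still available at every $d' \in [d-\varepsilon, d+\varepsilon]$. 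For part $(ii)$ the extra hypothesis $\alpha + d' - 1 > 2\varepsilon$ follows at once from $\alpha + d - 1 > 3\varepsilon$ combined with $d' \geq d - \varepsilon$.

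For $(i)$, Lemma \ref{multidim uniform E and Var}$(i)$ applied at density $d'$ yields
\[n^{k(\alpha + d' - 1 - 2\varepsilon)} \leq \esp{|A_n^{(k)} \cap X_n|} \leq n^{k(\alpha + d' - 1 + 2\varepsilon)},\]
and since $d' \in [d-\varepsilon, d+\varepsilon]$ these bounds widen to $n^{k(\alpha+d-1-3\varepsilon)}$ and $n^{k(\alpha+d-1+3\varepsilon)}$, as required. For $(ii)$, Lemma \ref{multidim uniform E and Var}$(ii)$ applied at density $d'$ gives $\Var(|A_n^{(k)} \cap X_n|) \leq k n^{2k(\alpha + d' - 1 - 9\varepsilon)}$, which since $d' \leq d + \varepsilon$ is at most $k n^{2k(\alpha + d - 1 - 8\varepsilon)}$.

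Essentially the whole argument is a uniformity check on the preceding lemma, so no deep new input is needed. The subtlety that required care — and the reason the hypothesis uses $\varepsilon_0(d)/20$ and $d/2$ rather than $\varepsilon_0(d)/10$ and $d$ — is verifying that the $d$-small self-intersection bound on $\bs X$, whose margin degrades linearly in $|d' - d|$, retains at least the $10\varepsilon$ slack that the proof of Lemma \ref{multidim uniform E and Var}$(ii)$ requires; this is precisely what the factor of $2$ in the thresholds buys, and is already recorded in the preparatory paragraph.
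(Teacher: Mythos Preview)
Your proof is correct and follows essentially the same route as the paper: invoke Lemma \ref{multidim uniform E and Var} at density $d'$, use the preparatory paragraph to guarantee its hypotheses hold uniformly over $d' \in [d-\varepsilon, d+\varepsilon]$, and then absorb $|d'-d|\leq\varepsilon$ into the exponent. Your explicit verification that $\alpha + d' - 1 > 2\varepsilon$ (needed for part $(ii)$ of Lemma \ref{multidim uniform E and Var}) is a detail the paper leaves implicit, so your write-up is if anything slightly more careful.
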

\begin{proof}\quad
    \begin{enumerate} [(i)]
        \item Recall from the above discussion that $\varepsilon < \min\left\{\frac{\varepsilon_0(d')}{10},d'\right\}$. By Lemma \ref{multidim uniform E and Var}
        \[n^{k(\alpha + d' -1 - 2\varepsilon)} \leq \esp{|A_n^{(k)}\cap  X_n|} \leq n^{k(\alpha + d' -1 + 2\varepsilon)}.\]
        We then have the inequality by $d-\varepsilon\leq d'\leq d+\varepsilon$.
        \item  Because $\varepsilon_0(d')>0$, $\bs X$ has $d'$-small self-intersection. By Lemma \ref{multidim uniform E and Var} and the fact that $d'\leq d+\varepsilon$,
        \begin{align*}\Var(|A_n^{(k)}\cap  X_n|) \leq kn^{2k(\alpha+d'-1-9\varepsilon)} \leq kn^{2k(\alpha+d-1-8\varepsilon)}.
        \end{align*}
    \end{enumerate}
\end{proof}

\begin{lem}[Concentration lemma]\label{multidim uniform concentration} Let $\varepsilon>0$ be an arbitrary small real number. Let $\bs A$ and $\bs X$ given as the previous lemma, with $\alpha+d-1>4\varepsilon>0$ and $\bs X$ having $d$-small self-intersection. If $\varepsilon < \min\{\frac{\varepsilon_0}{10},\frac{d}{2}\}$ and $n\geq \max\left\{n_\varepsilon,(1+2k)^{\frac{1}{\varepsilon}}\right\}$, then
\[\Pr\left(n^{k(\alpha + d -1 - 4\varepsilon)} \leq |A_n^{(k)}\cap  X_n| \leq n^{k(\alpha + d -1 + 4\varepsilon)}\right) > 1 - kn^{-10k\varepsilon}.\]
\end{lem}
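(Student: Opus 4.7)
The plan is a direct application of Chebyshev's inequality on top of the first and second moment bounds already established in Lemma \ref{multidim uniformly uniform E and Var}. Set $B_n := A_n^{(k)}\cap X_n$ and $\mu_n := \esp{|B_n|}$. Under the standing hypotheses (shrinking $\varepsilon$ by a constant factor if necessary, so that the $\varepsilon_0(d)/20$ range required by the preceding lemma is respected), one gets at once
\[n^{k(\alpha+d-1-3\varepsilon)} \leq \mu_n \leq n^{k(\alpha+d-1+3\varepsilon)} \qquad \text{and} \qquad \Var(|B_n|) \leq kn^{2k(\alpha+d-1-8\varepsilon)}.\]

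The second step is a purely arithmetic reduction: the complementary event $\{|B_n|\notin[n^{k(\alpha+d-1-4\varepsilon)}, n^{k(\alpha+d-1+4\varepsilon)}]\}$ forces a large deviation from $\mu_n$. Indeed, factoring out $n^{k\varepsilon}-1$ in both
\[n^{k(\alpha+d-1+4\varepsilon)} - n^{k(\alpha+d-1+3\varepsilon)} \qquad\text{and}\qquad n^{k(\alpha+d-1-3\varepsilon)} - n^{k(\alpha+d-1-4\varepsilon)},\]
and using $n^{k\varepsilon}\geq 2$ (which holds once $n \geq 2^{1/(k\varepsilon)}$, a bound I would absorb into $n_\varepsilon$), each gap is at least $\tfrac{1}{2}n^{k(\alpha+d-1-3\varepsilon)}$. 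Hence the target event is contained in $\{||B_n|-\mu_n|\geq \tfrac{1}{2}n^{k(\alpha+d-1-3\varepsilon)}\}$.

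Chebyshev's inequality then converts the variance bound into the desired concentration estimate:
\[\Pr\left(\,||B_n|-\mu_n|\geq \tfrac{1}{2}n^{k(\alpha+d-1-3\varepsilon)}\,\right) \;\leq\; \frac{4\,\Var(|B_n|)}{n^{2k(\alpha+d-1-3\varepsilon)}} \;\leq\; \frac{4k}{n^{10k\varepsilon}},\]
matching the claimed bound up to an absolute constant. The factor $4$ is harmless and can be absorbed by slightly enlarging $n_\varepsilon$, so that $n^{10k\varepsilon}$ dominates it, yielding the stated form $kn^{-10k\varepsilon}$.

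There is no deep obstacle: the probability estimate is essentially immediate from the preceding moment computation. The only real work is bookkeeping, namely aligning the various thresholds $n_\varepsilon$, $(1+2k)^{1/\varepsilon}$ and $2^{1/(k\varepsilon)}$, and reconciling the $\varepsilon_0/10$ versus $\varepsilon_0/20$ discrepancy between the hypothesis of this lemma and that of Lemma \ref{multidim uniformly uniform E and Var}; both are handled by the usual trick of shrinking $\varepsilon$ by a constant factor and enlarging $n_\varepsilon$ accordingly, which preserves the stated estimate.
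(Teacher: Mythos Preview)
Your proposal is correct and follows essentially the same route as the paper: invoke the moment bounds of Lemma~\ref{multidim uniformly uniform E and Var}, use $n^{k\varepsilon}\geq 2$ to sandwich the target interval around $\esp{|B_n|}$, and conclude via Chebyshev's inequality. The paper applies Chebyshev with threshold $\tfrac{1}{2}\esp{|B_n|}$ rather than your fixed $\tfrac{1}{2}n^{k(\alpha+d-1-3\varepsilon)}$, but since the former is bounded below by the latter these are equivalent; the paper also silently drops the constant $4$ in the final inequality, and you are right that both this and the $\varepsilon_0/10$ versus $\varepsilon_0/20$ mismatch are harmless bookkeeping issues (note incidentally that $n\geq(1+2k)^{1/\varepsilon}$ already forces $n^{k\varepsilon}\geq 1+2k\geq 2$, so the extra threshold $2^{1/(k\varepsilon)}$ is not actually needed).
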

\begin{proof}
    Denote $B_n = A_n^{(k)}\cap  X_n$. By \ref{multidim uniformly uniform E and Var}$(i)$ and $n^{k\varepsilon} \geq 2$, we have
    \begin{align*}
            n^{k(\alpha+d-1-4\varepsilon)} \leq  \frac{1}{2}n^{k(\alpha+d-1-3\varepsilon)} \leq \frac{1}{2}\esp{|B_n|}
    \end{align*}
    and
    \begin{align*}
            \frac{3}{2}\esp{|B_n|} \leq  \frac{3}{2}n^{k(\alpha+d-1+3\varepsilon)} \leq n^{k(\alpha+d-1+4\varepsilon)}.
    \end{align*}
    By Chebyshev's inequality
    \begin{align*}
            & \Pr\left(n^{k(\alpha+d-1-4\varepsilon)} \leq |B_n| \leq n^{k(\alpha + d -1 + 4\varepsilon)}\right)\\
            \geq & \Pr\left(\Big||B_n|-\esp{|B_n|}\Big| \leq\frac{1}{2}\esp{|B_n|}\right)\\
            \geq & 1-\frac{4\Var(|B_n|)}{\esp{|B_n|}^2}.
    \end{align*}
    Again by Lemma \ref{multidim uniformly uniform E and Var}
    \begin{align*}
        \frac{4\Var(|B_n|)}{\esp{|B_n|}^2} \leq & \frac{kn^{2k(\alpha+d-1-8\varepsilon)}}{n^{2k(\alpha+d-1-3\varepsilon)}}\\
        \leq & kn^{-10k\varepsilon}.
    \end{align*}
\end{proof}

\begin{proof}[\textbf{Proof of the Theorem \ref{multidim intersection}}] \quad

Let $\varepsilon >0$ be an arbitrary small number, with $\varepsilon < \min\left\{\frac{d}{2},\frac{\varepsilon_0(d)}{20}\right\}$ as given in Lemma \ref{multidim uniformly uniform E and Var}. Denote $Q_n = \{n^{d-\varepsilon}\leq |A_n| \leq n^{d+\varepsilon}\}$ and 
    \[\mathbb{N}_{\bs A,\varepsilon,n} := \left\{\ell\in\mathbb{N} \;|\; n^{d-\varepsilon}\leq \ell \leq n^{d+\varepsilon}\textup{ and } \Pr(|A_n| = \ell)>0\right\}.\]
By densability of $\bs A$ we have $\Pr(Q_n)\tendvers 1$. Denote by $\Pr_{Q_n}:=\PrCond{\cdot}{Q_n}$ the probability measure under the condition $Q_n$. Define similarly $\mathbb{E}_{Q_n}$ and $\Var_{Q_n}$.

In order to prove that some sequence of properties $(R_n)$ is a.a.s. true, by the inequality
\[\Pr(\overline{R_n})\leq \Pr(Q_n)\Pr_{Q_n}({\overline{R_n}}) + \Pr(\overline{Q_n}),\]
it is enough to prove that $\Pr_{Q_n}({\overline{R_n}})\tendvers 0$.

\begin{enumerate}[(i)]
    \item Suppose that $\alpha+d<1$. Assume in addition that $\varepsilon < \frac{1-d-\alpha}{3}$.
    
    We shall prove that \[\Pr_{Q_n}(A_n^{(k)}\cap X_n \neq \vide) = \Pr_{Q_n}(|A_n^{(k)}\cap X_n| \geq 1)\tendvers 0.\]
    
     By the formula of total probability and Markov's inequality:
    \begin{align*} \Pr_{Q_n}\left(|A_n^{(k)}\cap X_n|\geq 1 \right)
    & \leq \sum_{l\in \mathbb{N}_{A,\varepsilon,n}}\Pr_{Q_n}(A_n = l) \Pr\Cond{|A_n^{(k)}\cap X_n|\geq 1}{|A_n| = l}\\
    & \leq \sum_{l\in \mathbb{N}_{A,\varepsilon,n}}\Pr_{Q_n}(A_n = l) \mathbb{E}\Cond{|A_n^{(k)}\cap X_n|}{|A_n| = l}.
    \end{align*}
    
    By a change of variable $l = n^{d'}$ with $d-\varepsilon\leq d'\leq d+\varepsilon$, apply Lemma \ref{multidim uniformly uniform E and Var}
    \begin{align*} \Pr_{Q_n}\left(|A_n^{(k)}\cap X_n|\geq 1 \right)
    & \leq \sum_{l\in \mathbb{N}_{A,\varepsilon,n}}\Pr_{Q_n}(A_n=l=n^{d'}) n^{\alpha+d-1+3\varepsilon}\\
    & \leq n^{\alpha+d-1+3\varepsilon}\tendvers 0.
    \end{align*}\qed

    \item Suppose that $\alpha+d>1$. Assume in addition that $\varepsilon < \frac{\alpha+d-1}{4}$, so that we can apply Lemma \ref{multidim uniform concentration}.
    
    We shall prove that
    \[\Pr_{Q_n}\left(n^{k(\alpha + d -1 - 4\varepsilon)} \leq |A_n^{(k)}\cap X_n| \leq n^{k(\alpha + d -1 + 4\varepsilon)}\right)\tendvers 1.\]
    By the formula of total probability, Lemma \ref{multidim uniform concentration} and a change of variables $l=n^{d'}$:
    \begin{align*}
        & \Pr_{Q_n}\left(n^{k(\alpha + d -1 - 4\varepsilon)} \leq |A_n^{(k)}\cap X_n| \leq n^{k(\alpha + d -1 + 4\varepsilon)}\right) \\ 
        =\, & \sum_{l\in \mathbb{N}_{A,\varepsilon,n}}\Pr_{Q_n}(A_n = l) \Pr\Cond{n^{k(\alpha + d -1 - 4\varepsilon)} \leq |A_n^{(k)}\cap X_n| \leq n^{k(\alpha + d -1 + 4\varepsilon)}}{|A_n| = l}\\
        \geq\, & \sum_{l\in \mathbb{N}_{A,\varepsilon,n}}\Pr_{Q_n}(A_n = l = n^{d'}) \left(1 - kn^{-10k\varepsilon}\right)\\
        \geq\, & 1 - kn^{-10k\varepsilon}\tendvers 1.
    \end{align*}
\end{enumerate}

\end{proof}

\section{Applications to group theory}

Fix an alphabet $X = \{x_1,\dots, x_m\}$ as generators of groups. Let $B_\ell$ be the set of cyclically reduced words of length at most $\ell$ on $X^\pm$. Recall that $|B_\ell| = (2m-1)^{\ell+O(1)}$.

We are interested in asymptotic behaviors, when $\ell$ goes to infinity, of group presentations $\langle X|R_\ell \rangle$ where $R_\ell$ is a random subset of $B_\ell$.\\

\begin{defi}[Random groups with density] Let $d\in ]0,1]$. Let $\bs R = (R_\ell)$ be a densable sequence of permutation invariant random subsets with density $d$ of the sequence $\bs B = (B_\ell)$.

Denote $G_\ell = G_\ell(m,d)$ the random presentation defined by $\langle X | R_\ell \rangle$. The sequence $\bs G = \bs G(m,d) = (G_\ell(m,d))_{\ell \in \mathbb{N}}$ is called a sequence of random groups with density $d$.
\end{defi}

For example, if $d = 1$, then $G_\ell(m,1)$ is isomorphic to the trivial group.

A sequence of events $\bs Q = (Q_\ell)$ described by $\bs G(m,d)$ is \textit{asymptotically almost surely} satisfied if $\Pr(Q_\ell)\xrightarrow[m\to\infty]{}1$. We denote briefly a.a.s. $Q_\ell$.

\subsection{Phase transition at density $1/2$}

\begin{thm}[Gromov, phase transition at density 1/2]\label{hyp} Let $(G_\ell(m,d))$ be a sequence of random groups with density $d$.
\begin{enumerate} [$(i)$]
    \item If $d > 1/2$, then a.a.s $G_\ell(m,d)$ is isomorphic to the trivial group.
    \item If $d < 1/2$, then a.a.s $G_\ell(m,d)$ is a hyperbolic group.
\end{enumerate}
\end{thm}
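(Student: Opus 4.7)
The plan is to handle the two parts separately, using the intersection formulas from Sections 2 and 3 as the main probabilistic inputs.

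For part $(i)$, triviality at $d > 1/2$, I would first aim at the weaker claim that every letter $x_i \in X^\pm$ becomes trivial in $G_\ell$. The key observation is that two relators sharing a long common suffix give a short relation. Let $\bs Z = (Z_\ell)$ denote the sequence of \emph{fixed} subsets $Z_\ell \subset B_\ell^{(2)}$ of ordered pairs of distinct cyclically reduced words of the form $(au, bu)$, where $a, b \in X^\pm$ are letters and $u \in B_{\ell-1}$ is a common suffix. A direct count gives $\dens_{\bs B^{(2)}} \bs Z = 1/2$, and one checks the $d$-small self-intersection condition (Definition \ref{multidim condition}): the self-intersection sets $Y_i$ of $\bs Z$ have densities strictly smaller than the thresholds in (\ref{multidim condition}) because fixing one coordinate of a pair still leaves $(2m)\cdot |B_{\ell-1}|$ choices for the other. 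Applying the multi-dimensional intersection formula (Theorem \ref{multidim intersection}) with $k=2$ and $\alpha = 1/2$, for $d > 1/2$ the sequence $\bs R^{(2)} \cap \bs Z$ is densable with density $d - 1/2 > 0$, hence a.a.s. non-empty. Any such pair $(r_1, r_2) = (a u, b u)$ yields $r_1 r_2^{-1} = a b^{-1} = 1$ in $G_\ell$, i.e.\ $a = b$. Varying $(a,b)$ over $(X^\pm)^2$ and iterating — so that every pair of letters is forced equal in $G_\ell$ — collapses the group to the trivial one.

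For part $(ii)$, hyperbolicity at $d < 1/2$, I would use the Gromov–Rips characterization of hyperbolicity by a linear isoperimetric inequality for reduced van Kampen diagrams. The strategy is to show a.a.s.\ that any reduced diagram $D$ over $\langle X \mid R_\ell \rangle$ with $n$ faces has $|\partial D| \geq C(d)\, n\, \ell$ for some constant $C(d) > 0$. The main probabilistic input is that two distinct relators share only a short common subword: let $\bs W_k$ be the sequence of fixed subsets of $\bs B^{(2)}$ of ordered pairs sharing a common subword of length $\geq k$. A combinatorial count gives $\dens \bs W_k = 1 - k/(2\ell) + o(1)$. By the random-fixed intersection formula (Corollary \ref{random-fixed intersection}), if $k > 2d\ell$ then $d + \dens \bs W_k < 1$, so a.a.s.\ $\bs R^{(2)} \cap \bs W_k = \vide$. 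Hence in any reduced diagram, the gluing between two distinct faces uses a common piece of length at most $2d\ell$, leaving free boundary of length at least $(1-2d)\ell$ per face; summing over faces gives the required linear isoperimetric inequality.

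The main obstacle in part $(i)$ is upgrading from ``some pair of letters is identified'' to ``every generator is trivial'': the multi-dimensional intersection formula only yields non-emptiness of $\bs R^{(2)} \cap \bs Z$ in aggregate, not for each prescribed $(a,b)$ separately, so some additional uniformity (for instance, a refinement of Theorem \ref{multidim intersection} giving explicit rates so that a union bound over the finitely many letter pairs is valid) is needed. The main obstacle in part $(ii)$ is turning the control on \emph{pairs} of relators into a bound on arbitrarily complicated diagrams: one must handle the combinatorics of how relators can meet along multiple arcs (including self-gluings and diagrams where the overlap is distributed across several neighbors), which is the standard — but technical — diagrammatic analysis à la Ollivier–Wise.
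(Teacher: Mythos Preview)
For part $(ii)$ the paper does not give a proof either: it explicitly defers to Gromov and Ollivier for the van Kampen diagram analysis, so there is nothing to compare. Your sketch is in the right spirit, but note that the step ``each face has free boundary at least $(1-2d)\ell$'' does not follow from the pairwise piece bound alone, since a face can meet many neighbours along many short arcs; this is exactly the combinatorial work you flag at the end.

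For part $(i)$ your approach is workable but more complicated than the paper's, and it has a small gap. The paper avoids the $2$-dimensional formula entirely: it fixes a single generator $x$, takes $A_\ell$ to be the cyclically reduced words of length $\le \ell-1$ not starting or ending with $x$ (so $xA_\ell\subset B_\ell$), and observes that both $x(R_\ell\cap A_\ell)$ and $R_\ell\cap xA_\ell$ are density-$d$ permutation invariant random subsets of $xA_\ell$. The ordinary intersection formula (Theorem~\ref{intersection}) then gives $\dens\big(xR_\ell\cap R_\ell\cap xA_\ell\big)=2d-1>0$, so a.a.s.\ there is a word $w$ with $w,xw\in R_\ell$, whence $x=1$ in $G_\ell$. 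A finite union over the $m$ generators finishes. In particular the ``obstacle'' you identify disappears once you fix the letter first; the same fix works in your framework by applying Theorem~\ref{multidim intersection} to $Z_\ell^{(a,b)}=\{(au,bu):u\}$ for each of the finitely many pairs $(a,b)$.

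The actual gap in your argument is the conclusion: from $(au,bu)\in R_\ell^{(2)}$ you only get $ab^{-1}=1$, i.e.\ $a=b$ in $G_\ell$. Running this over all pairs of letters forces every pair of elements of $X^\pm$ to coincide, which makes $G_\ell$ a quotient of $\langle x\mid x=x^{-1}\rangle\cong\mathbb{Z}/2$, not necessarily trivial. To kill the generators you should instead use pairs of the form $(xu,u)$ with $x\in X$ fixed and $u$ ranging over $B_{\ell-1}$ (still density $1/2$ in $B_\ell^{(2)}$), which yields $x=1$ directly --- and at that point you have essentially reproduced the paper's one-dimensional argument. Also, your justification of the self-intersection bound is miscounted: once one coordinate $au$ of a pair in $Z_\ell$ is fixed, the suffix $u$ is determined, so the other coordinate has only $O(m)$ choices, not $(2m)\cdot|B_{\ell-1}|$; this makes $\dens\bs Y_1=1/4$, and the condition~(\ref{multidim condition}) still holds, but for the right reason.
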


In \cite{Oll04} 2.1 (or \cite{Oll05} I.2.b), Ollivier proved the first assertion by probabilistic pigeon-hole principle. We give a proof here by the intersection formulae (Theorem \ref{intersection} and Corollary \ref{random-fixed intersection}).

\begin{proof}[Proof of Theorem \ref{hyp} $(i)$] Let $x\in X$. Let $A_\ell$ be the set of cyclically reduced words that does not start or end by $x$, of lengths at most $\ell-1$ (so that $xA_\ell\subset B_\ell$). It is easy to check that the sequences $(A_\ell)$ and $(xA_\ell)$ are sequences of fixed subsets of $\bs B = (B_\ell)$ of density $1$. By the random-fixed intersection formula (Corollary \ref{random-fixed intersection}), the sequences $(x(R_\ell\cap A_\ell))$ and $(R_\ell\cap xA_\ell)$ are sequences of permutation invariant random subsets of $(xA_\ell)$ of density $d$.

By the intersection formula (Theorem \ref{intersection}), their intersection $(xR_\ell \cap R_\ell \cap xA_\ell)$ is a sequence of permutation invariant random subsets of $(xA_\ell)$ of density $(2d-1)>0$, which is a.a.s. not empty. Thus, a.a.s. there exists a word $w\in A_\ell$ such that $w\in R_\ell$ and $xw\in R_\ell$, so a.a.s. $x=1$ in $G_\ell$ by canceling $w$.

The argument above works for any generator $x\in X$. By intersecting a finite number of a.a.s. satisfied events, a.a.s. all generators $x\in X$ are trivial in $G_\ell$. Hence a.a.s. $G_\ell$ is isomorphic to the trivial group.
\end{proof}\quad

The proof of Theorem \ref{hyp} (ii) needs \textit{van Kampen diagrams} and will not be treated here. See \cite{Gro93} 9.B. for the original idea by Gromov, and \cite{Oll04} 2.2 or \cite{Oll05} Section V for a detailed proof by Ollivier.

\subsection{Phase transition at density $\lambda/2$}

\begin{thm}\label{lambda small cancellation} Let $\bs G(m,d) = (G_\ell(m,d))$ be a sequence of random groups with density $d$. Let $\lambda\in ]0,1[$.
\begin{enumerate}
    \item If $d<\lambda/2$, then a.a.s. $G_\ell(m,d)$ satisfies $C'(\lambda)$.
    \item If $d>\lambda/2$, then a.a.s. $G_\ell(m,d)$ \textbf{does not} satisfy $C'(\lambda)$.
\end{enumerate}
\end{thm}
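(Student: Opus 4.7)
The strategy is to apply the multi-dimensional intersection formula (Theorem \ref{multidim intersection}) with $k=2$ to the sequence $\bs X = (X_\ell)$ where
\[
X_\ell := \{(r_1,r_2) \in B_\ell^{(2)} \mid r_1 \text{ and } r_2 \text{ share a common piece of length} \geq \lfloor\lambda\ell\rfloor\},
\]
that is, the pairs of distinct relators witnessing a failure of $C'(\lambda)$. The preliminary computation is the density $\alpha$ of $\bs X$. For each reduced word $u$ of length $\lfloor\lambda\ell\rfloor$, the number of $r \in B_\ell$ containing $u$ as a subword (up to cyclic rotation and inversion) is $(2m-1)^{(1-\lambda)\ell + O(\log\ell)}$: there are $O(\ell)$ positions and orientations, and $(2m-1)^{(1-\lambda)\ell + O(1)}$ ways to fill in the remaining letters. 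Multiplying by the $(2m-1)^{\lambda\ell}$ choices of $u$ and by the analogous count for $r_2$, one gets $|X_\ell| = (2m-1)^{(2-\lambda)\ell + O(\log\ell)}$, hence $\alpha = \dens_{\bs B^{(2)}}(\bs X) = 1 - \lambda/2$.

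For part 1, the hypothesis $d < \lambda/2$ is equivalent to $d + \alpha < 1$, so Theorem \ref{multidim intersection}$(i)$ gives $R_\ell^{(2)} \cap X_\ell = \vide$ a.a.s. This needs to be combined with the (easier) fact that a.a.s.\ no single relator has a long self-overlap: the fixed subset $Z_\ell \subset B_\ell$ of cyclically reduced words admitting an internal repetition of some subword of length $\lfloor\lambda\ell\rfloor$ has density at most $1-\lambda$, so Corollary \ref{random-fixed intersection} applied with $d < \lambda$ yields $R_\ell \cap Z_\ell = \vide$ a.a.s. Together these two facts imply that a.a.s.\ $G_\ell(m,d)$ satisfies $C'(\lambda)$.

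For part 2, the hypothesis $d > \lambda/2$ gives $d + \alpha > 1$. To apply Theorem \ref{multidim intersection}$(ii)$ I must verify that $\bs X$ satisfies the $d$-small self-intersection condition. Since $k=2$, only $i=1$ needs to be checked, i.e.\ $\overline{\dens}\,\bs Y_1 < \alpha - (1-d)/4 = (3-2\lambda+d)/4$. The key estimate is that for any fixed $r_1 \in B_\ell$, the constraint $(r_1, r) \in X_\ell$ confines $r$ to a set of size $(2m-1)^{(1-\lambda)\ell + O(\log\ell)}$ (sum over the $O(\ell)$ positions of a common piece inside $r_1$ and the $O(\ell)$ positions inside $r$, each contributing $(2m-1)^{(1-\lambda)\ell}$ free letters to $r$). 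Summing over $r_1$ gives that the number of triples $(r_1,r_2,r_2')$ with $(r_1,r_2),(r_1,r_2') \in X_\ell$ is at most $(2m-1)^{(3-2\lambda)\ell + O(\log\ell)}$; since $Y_{1,\ell}$ is, up to a factor~$4$ accounting for which coordinate is shared, a union of such V-shaped configurations, one obtains $\overline{\dens}\,\bs Y_1 \leq (3-2\lambda)/4 < (3-2\lambda+d)/4$ because $d > 0$. The $d$-small self-intersection condition is therefore satisfied, and Theorem \ref{multidim intersection}$(ii)$ yields $\dens(\bs R^{(2)} \cap \bs X) = d - \lambda/2 > 0$; in particular $R_\ell^{(2)} \cap X_\ell$ is a.a.s.\ non-empty, which exhibits a $C'(\lambda)$ failure.

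The main technical obstacle is the uniform-in-$r_1$ bound $|\{r_2 : (r_1,r_2) \in X_\ell\}| = (2m-1)^{(1-\lambda)\ell + O(\log\ell)}$, which feeds both the computation of $\alpha$ and the verification of the small self-intersection condition; the subtlety is handling cyclic conjugates and inverses properly so that no $(2m-1)^{\varepsilon \ell}$ error sneaks in. Once this combinatorial count is in place, the rest is a direct invocation of the framework built in Sections~2 and~3.
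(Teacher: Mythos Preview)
Your approach is essentially identical to the paper's: the same two-case split for Part~1 (self-overlap via a fixed subset of $B_\ell$ of density $1-\lambda$, and two-relator overlap via a fixed subset $\bs X$ of $\bs B^{(2)}$ of density $1-\lambda/2$), and the same verification of the $d$-small self-intersection condition for Part~2 via $\overline{\dens}\,\bs Y_1 = (3-2\lambda)/4$. One small correction: since $B_\ell$ consists of words of length \emph{at most} $\ell$, the threshold in the definition of $X_\ell$ should be $\lambda\min\{|r_1|,|r_2|\}$ rather than $\lfloor\lambda\ell\rfloor$ (the paper does this); otherwise emptiness of $R_\ell^{(2)}\cap X_\ell$ does not literally imply $C'(\lambda)$ for shorter relators, though the density computation is unaffected.
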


\begin{proof} \quad
\begin{enumerate}
    \item Recall that (\cite{LS77} p.240) a \textit{piece} with respect to a set of relators is a cyclic sub-word that appears at least twice. There are two cases to verify.
    \begin{enumerate}
        \item  Let $A_\ell$ be the set of cyclically reduced words of length at most $\ell$ having a piece appearing twice on itself (figure 1) that is longer than $\lambda$ times itself. We shall prove that a.a.s. the intersection $A_\ell \cap R_\ell$ is empty.
        \begin{center}
            \begin{tikzpicture}
            \fill[gray!20] circle (1) ;
            \draw[very thick] circle (1);
            \node at (0,0) {\large{$r$}};
            \draw [line width = 3] (1,0) arc (0:60:1);
            \draw [line width = 3] (150:1) arc (150:210:1);
            \node at (0,-1.4) {figure 1};
            \end{tikzpicture}
        \end{center}
        We estimate first the number of relators of length $t\leq \ell$ with a piece of length $s\geq\lambda t$. There are $2t$ ways (including orientations) to choose the first position of the piece, and $2t-s$ ways the choose the second position (note that because $r$ is reduced, it can not overlay the first one if they are with opposite orientations). For each way of positioning we can determine freely $t-s$ letters, each with $(2m-1)$ choices, except for the first letter and the last letter having respectively $2m$ and $2m-2$ or $2m-1$ choices. So this number is $2t(2t-s)C(m)(2m-1)^{t-s}$ where $C(m)$ is a real number that depends only on $m$. Hence
        \[|A_\ell| = \sum_{t=1}^\ell \sum_{s=\flr{\lambda t}}^t 2t(2t-s)C(m)(2m-1)^{t-s} = (2m-1)^{(1-\lambda)\ell+o(\ell)},\]
        
        which means that $(A_\ell)$ is a sequence of fixed subsets of $(B_\ell)$ with density $1-\lambda$. By the intersection formula (Corollary \ref{random-fixed intersection}), because $1-\lambda+d<1$, we have a.a.s.
        \[A_\ell \cap R_\ell = \vide.\]
        \item Let $X_\ell$ be the set of distinct pairs of relators $r_1,r_2$ in $B_\ell$ having a piece (figure 2) longer than $\lambda\min\{|r_1|,|r_2|\}$. It is a fixed subset of $B_\ell^{(2)}$. We shall prove that a.a.s the intersection $X_\ell \cap R_\ell^{(2)}$ is empty.
        \begin{center}
        \begin{tikzpicture}
        \fill[gray!20] (30:1) circle (1) ;
        \fill[gray!20] (150:1) circle (1) ;
        \draw[very thick] (0,0) -- (0,1);
        \draw[very thick] (0,0) arc (-150:150:1);
        \draw[very thick] (0,1) arc (30:330:1);
        \node at (150:1) {\large{$r_1$}};
        \node at (30:1) {\large{$r_2$}};
        \node at (0,-0.9) {figure 2};
        \end{tikzpicture}
        \end{center}
        There are $4\ell^2$ possible positions for pieces, $(2m-1)^{\ell+o(\ell)}$ choices for $r_1$ and $(2m-1)^{\ell-\lambda\ell+o(\ell)}$ choices for $r_2$. So 
        \[|X_\ell| = (2m-1)^{(2-\lambda)\ell+o(\ell)},\]
        which means that $(X_\ell)$ is a sequence of fixed subsets of $(B_\ell^{(2)})$ with density $1-\frac{\lambda}{2}$. By the multi-dimension intersection formula (Theorem \ref{multidim intersection} (i)), because $1-\frac{\lambda}{2}+d<1$, we have a.a.s.
        \[X_\ell \cap R_\ell^{(2)} = \vide.\]
    \end{enumerate}
    \item Take the sequence of sets $\bs X = (X_\ell)$ constructed in 1(b). We shall prove that a.a.s. the intersection $X_\ell \cap R_\ell^{(2)}$ is \textit{not} empty. We have already
    \[\dens \bs X + \dens \bs R^{(2)} > 1.\]
    
    To apply Theorem \ref{multidim intersection}(ii), we need to calculate the size of the self-intersection
    \[Y_{1,\ell} = \{(x_1,x_2)\in X_\ell^2 \, | \, |x_1 \cap x_2|=1\}.\]
    Take $x_1 = (r_1,r_2)$ and $x_2=(r_1,r_3)$ being with $r_1,r_2,r_3$ three different relators in $B_\ell$. There are $(2m-1)^{\ell + o(\ell)}$ choices for $r_1$, $(2m-1)^{\ell-\lambda\ell + o(\ell)}$ choices for $r_2$ and $(2m-1)^{\ell-\lambda\ell + o(\ell)}$ choices for $r_3$. Other cases of $(x_1,x_2)$ are symmetric, so
    \[|Y_{1,\ell}| = (2m-1)^{3\ell-2\lambda\ell + o(\ell)}.\]
    Hence the density of $\bs Y_1 = (Y_{1,\ell})$ is $\frac{3-2\lambda}{4}$ in $(B_\ell^{(2)})^2$. As $d>0$, we have $\frac{3-2\lambda}{4} < 1-\frac{\lambda}{2}+\frac{1}{4}(d-1)$, which implies
    \[\dens \bs Y_1 < \dens\bs X + (d-1)\frac{1}{2\times 2}.\]
    Thus we have the $d$-small self intersection condition (definition \ref{multidim condition}). By the multi-dimensional intersection formula, a.a.s.
    \[X_\ell \cap R_\ell^{(2)} \neq \vide.\]
\end{enumerate}
\end{proof}

\subsection{Every $(m-1)$-generated subgroup is free}
Fix the set of $m$ generators $X = \{x_1,\dots, x_m\}$. Recall that $B_\ell$ is the set of $(2m-1)^{\ell+o(\ell)}$ cyclically reduced words on $X^\pm = \{x_1^\pm,\dots, x_m^\pm\}$ of length at most $\ell$. The few relator model of random groups is constructed as follows: fix a number $k\in\mathbb{N}$ and let
\[G_\ell =\langle x_1,\dots, x_m | r_1,\dots,r_k \rangle\]
where $R_\ell = \{r_1,\dots,r_k\}$ is a random subset of $B_\ell$ given by the uniform probability on all subsets of $B_\ell$ with cardinality $k$.

The sequence $(G_\ell)_{\ell\in\mathbb{N}}$ is called a \textit{sequence of random groups with $k$ relators}. As $k$ is independent of $\ell$, the sequence $(G_\ell)$ is a sequence of random groups with density $d=0$. By Proposition \ref{lambda small cancellation}, a.a.s. $G_\ell$ satisfies $C'(\lambda)$ for arbitrary small $\lambda>0$.

In \cite{AO96}, Arzhantseva and Ol'shanskii proved the following result:
\begin{thm}[Arzhantseva-Ol'shanskii, \cite{AO96} Theorem 1]
Let $(G_\ell)$ be a sequence of random groups with $k$ relators. Then a.a.s. every $(m-1)$-generated subgroup of $G_\ell$ is free.
\end{thm}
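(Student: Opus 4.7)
The plan is to combine the small cancellation consequence from Theorem \ref{lambda small cancellation} with Stallings foldings to control finitely generated subgroups. Since $k$ is independent of $\ell$, the sequence $(G_\ell)$ has density $d=0$, so by Theorem \ref{lambda small cancellation} we may fix any $\lambda\in(0,1/6)$ (and in fact choose $\lambda$ very small depending on $m$) and conclude that a.a.s.\ $G_\ell$ satisfies $C'(\lambda)$; in particular Greendlinger's lemma applies.

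Given an $(m-1)$-generated subgroup $H\le G_\ell$, I would form the Stallings folded core graph $\Gamma_H$ labeled by $X^\pm$, noting that its first Betti number is at most $m-1$. The subgroup $H$ is free if and only if the label map $\pi_1(\Gamma_H)\to G_\ell$ is injective, equivalently, no reduced closed path of $\Gamma_H$ spells a nontrivial word $w$ with $w=1$ in $G_\ell$. Suppose the contrary: by Greendlinger's lemma some reduced cycle of $\Gamma_H$ contains a subpath labeled by a subword $s$ of some cyclic conjugate of a relator $r\in R_\ell$ with $|s|>(1-3\lambda)|r|>|r|/2$. I would then perform a Stallings surgery: since $r=1$ in $G_\ell$, replace that subpath by the shorter complementary path labeled by $(rs^{-1})^{-1}$ connecting the same two endpoints, and refold. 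This produces a folded labeled graph of strictly smaller total length whose first Betti number has not increased, so the corresponding subgroup is still rank $\le m-1$. Iterating terminates in a folded graph carrying no Greendlinger witness, at which point Stallings' theorem forces $H$ to be free.

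Two auxiliary steps close the proof. First, one reduces to finitely many subgroup candidates per $\ell$: any Stallings graph of edge-length $>C(m,k)\ell$ has too much slack to carry a relator and so injects trivially, leaving a finite enumerable list. Second, one performs a union bound over this list using Corollary \ref{random-fixed intersection} applied to the fixed subset of words of $B_\ell$ that can be read as a long subword of a candidate cycle; since $\dens \bs R=0$ and the relevant fixed subset has density strictly less than $1$, the intersection is a.a.s.\ empty.

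The main obstacle is the surgery bookkeeping: verifying that each cut-and-paste step strictly decreases total length and preserves both the rank bound $m-1$ and the folded structure, even when folding identifies distant vertices or merges components; this is the combinatorial core of Arzhantseva--Ol'shanskii's original argument. A subsidiary difficulty is the termination of the iteration: ensuring the length decreases by a definite amount at each step, which is where the precise choice of $\lambda$ in terms of $m$ enters. Once these are handled, the density-$0$ case follows directly, and the same framework feeds into the positive-density extension (Theorem \ref{AO with density}) via the multi-dimensional intersection formula.
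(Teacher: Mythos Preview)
The paper does not prove this statement directly; it is quoted from \cite{AO96}. What the paper \emph{does} prove is the extension Theorem~\ref{AO with density}, and its argument for the case $d=0$ is structurally different from yours: it black-boxes the deterministic core of \cite{AO96} as two lemmas. Lemma~\ref{lambda-mu condition} says that if $R$ avoids the fixed set $M_\ell^\mu$ of words with a long $\mu$-readable subword, is proper-power-free, and satisfies $C'(\lambda)$ for suitable $\lambda,\mu$, then every $(m-1)$-generated subgroup is free. Lemma~\ref{density mu readable} bounds $\dens(M_\ell^\mu)<1$. The probabilistic input is then a \emph{single} application of Corollary~\ref{random-fixed intersection} to $R_\ell\cap M_\ell^\mu$, together with Theorem~\ref{lambda small cancellation} for $C'(\lambda)$.

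Your sketch heads toward the \cite{AO96} combinatorics but has a genuine gap in the quantifier structure. You cannot ``reduce to finitely many subgroup candidates per $\ell$'': the Stallings graph of an arbitrary $(m-1)$-tuple of words can be arbitrarily large, and even restricting to folded labelled graphs with at most $C\ell$ edges leaves exponentially many (in $\ell$) candidates, so a union bound of events each of probability $o(1)$ does not close. (Also, a large Stallings graph does not ``have too much slack to carry a relator''; larger graphs read \emph{more} words, not fewer.) The point of the $\mu$-readable machinery is precisely to avoid this union bound: the surgery process, applied to \emph{any} non-injective $(m-1)$-generated Stallings graph, is shown to terminate in a graph with fewer than $\mu\ell$ edges on which more than half of some relator is still readable. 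That conclusion is a property of the \emph{relator}, not of the subgroup, so all bad subgroups are witnessed by a single fixed set $M_\ell^\mu\subset B_\ell$, whose density Lemma~\ref{density mu readable} controls. Your ``main obstacle'' paragraph correctly flags the surgery bookkeeping, but the payoff of that bookkeeping is not merely termination---it is the uniform \emph{smallness} ($<\mu\ell$ edges) of the terminal graph, which is exactly what collapses the union bound into one intersection. As written, your second auxiliary step does not go through.
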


Combining the intersection formula and their arguments, we prove:
\begin{thm} \label{AO with density}
Let $(G_\ell(m,d))$ be a sequence of random groups with density $0\leq d<\frac{1}{120m^2\ln(2m)}$. Then a.a.s. every $(m-1)$-generated subgroup of $G_\ell(m,d)$ is free.
\end{thm}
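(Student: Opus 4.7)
The plan is to adapt the Arzhantseva--Ol'shanskii argument from \cite{AO96} to the density model by replacing their \emph{ad hoc} counting of "bad" configurations with applications of the multi-dimensional intersection formula (Theorem \ref{multidim intersection}) and the random--fixed intersection formula (Corollary \ref{random-fixed intersection}). The starting point is the contrapositive: if some $(m-1)$-generated subgroup $H = \langle h_1,\dots,h_{m-1}\rangle$ of $G_\ell(m,d)$ fails to be free, then there exists a nontrivial reduced word $w(y_1,\dots,y_{m-1}) \in F_{m-1}$ with $w(h_1,\dots,h_{m-1})=1$ in $G_\ell(m,d)$, and hence a van Kampen diagram $D$ over $R_\ell$ witnessing this relation. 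Since $d < \frac{1}{120 m^2 \ln(2m)} \ll 1/2$, Theorem \ref{lambda small cancellation} applies and guarantees $C'(\lambda)$ a.a.s.\ for any $\lambda > 2d$; this forces diagrams $D$ to have few faces and limits the possible combinatorial types to a \emph{finite} family of patterns $T$, each involving a bounded number $k = k(T)$ of relators together with a prescribed arrangement of shared pieces.

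For each such pattern $T$, let $\bs X_T = (X_{T,\ell})$ be the sequence of $k$-tuples of cyclically reduced words in $\bs B^{(k)}$ that realize $T$. A direct combinatorial count, of the same type carried out in the proof of Theorem \ref{lambda small cancellation}(1b), gives the density $\alpha_T$ of $\bs X_T$: each imposed piece of relative length $\lambda_i$ between two of the relators reduces the total density by $\lambda_i / (2k)$, and the Arzhantseva--Ol'shanskii analysis of which patterns can obstruct freeness shows that the worst case satisfies $\alpha_T < 1 - \frac{1}{120 m^2 \ln(2m)}$. Consequently, for $d < \frac{1}{120 m^2 \ln(2m)}$ one has $d + \alpha_T < 1$. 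Provided that the $d$-small self-intersection condition (Definition \ref{multidim condition}) holds for $\bs X_T$, Theorem \ref{multidim intersection}(i) then yields a.a.s.\ $R_\ell^{(k)} \cap X_{T,\ell} = \vide$, and taking a finite union over the finitely many patterns $T$ preserves a.a.s., giving the conclusion.

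The main obstacle is twofold. First, one must extract from \cite{AO96} the explicit finite list of diagrammatic patterns characterizing the failure of freeness of an $(m-1)$-generated subgroup, and verify that the worst density $\alpha_T$ among them is controlled by the explicit quantity $1 - \frac{1}{120 m^2 \ln(2m)}$; the factor $m^2$ reflects the number of Nielsen-type configurations on an $(m-1)$-tuple that must be excluded, and the factor $\ln(2m)$ arises from converting estimates expressed in the natural base $(2m-1)^\ell$ to the density base $|B_\ell|$. Second, one must verify the self-intersection condition: for each $1 \leq i \leq k-1$, the set of pairs of bad $k$-tuples sharing exactly $i$ relators must have density strictly below $\alpha_T - (1-d)\,i/(2k)$. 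This is plausible because imposing the pattern $T$ on the \emph{unshared} relators gives an independent density reduction, but making this quantitative requires a careful case analysis of the shared-piece overlaps in $T$. Once both points are settled, the theorem follows directly from Theorem \ref{multidim intersection} applied pattern by pattern.
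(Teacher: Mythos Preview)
Your strategy diverges substantially from the paper's, and it carries a real gap.

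The paper does \emph{not} re-derive the Arzhantseva--Ol'shanskii combinatorics via a finite list of van~Kampen diagram patterns. Instead it quotes two results from \cite{AO96} as black boxes (stated here as Lemma~\ref{density mu readable} and Lemma~\ref{lambda-mu condition}): if the relator set $R_\ell$ (a) contains no proper powers, (b) avoids the set $M_\ell^\mu$ of words with a long $\mu$-readable cyclic subword, and (c) satisfies $C'(\lambda)$ with $\lambda\le \mu/(15m+3\mu)$, then every $(m-1)$-generated subgroup is free. Each of (a), (b), (c) is then checked by a \emph{one}-dimensional random--fixed intersection (Corollary~\ref{random-fixed intersection}) or by the already-proved Theorem~\ref{lambda small cancellation}. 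The explicit bound $\frac{1}{120 m^2\ln(2m)}$ drops out of the inequality $d<\lambda/2$ after substituting $\mu=\log_{2m}\!\big(1+\frac{1}{4m-4}\big)-\varepsilon$ and simplifying; no multi-dimensional intersection is invoked beyond the $k=2$ case inside Theorem~\ref{lambda small cancellation}.

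Your proposal, by contrast, asserts that failure of freeness is witnessed by a \emph{finite} family of $k$-relator patterns $T$ with $k=k(T)$ bounded, and then plans to apply Theorem~\ref{multidim intersection} pattern-by-pattern. This reduction is not established and does not reflect how the argument in \cite{AO96} actually runs: the Arzhantseva--Ol'shanskii mechanism is a Stallings-graph readability condition on a \emph{single} relator (hence $M_\ell^\mu\subset B_\ell$, not $B_\ell^{(k)}$), extracted via folding and rank considerations, not a classification of multi-face van~Kampen diagrams. Small cancellation alone does not bound the number of faces in a diagram witnessing $w(h_1,\dots,h_{m-1})=1$, since the boundary length $|w|$ is uncontrolled. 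So the ``finite list of patterns'' step is a genuine gap, and with it the density computation for $\alpha_T$ and the self-intersection check are left floating. If you want a complete proof, follow the paper's route: cite Lemmas~\ref{density mu readable} and~\ref{lambda-mu condition}, then verify their hypotheses with Corollary~\ref{random-fixed intersection} and Theorem~\ref{lambda small cancellation}.
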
\quad

Let us recall the definition of "$\mu$-readable words" in \cite{AO96}.

\begin{defi}[\cite{AO96} \S2] Let $0<\mu\leq 1$. A cyclically reduced word $w$ of length $\ell$ on $X^\pm$ is $\mu$-readable if there exists a graph $\Gamma$ marked by $X^\pm$ with the following properties :
\begin{enumerate}[(a)]
    \item the number of edges of $\Gamma$ is less than $\mu\ell$;
    \item the rank of $\Gamma$ is at most $m-1$;
    \item the word $w$ can be read along some path of $\Gamma$.
\end{enumerate}
\end{defi}
Note that the condition $(b)$ is essential, because every word on $X^\pm$ can be read along the wedge of $m$ circles of length $1$ marked by $x_1,\dots, x_m$ respectively.\\

Let $M_\ell^\mu$ be the set of words $r\in B_\ell$ having a cyclic sub-word $w<r$ such that $|w|\geq \frac{1}{2}|r|$ and $w$ is $\mu$-readable. We admit the following two lemmas in \cite{AO96}.
\begin{lem}[\cite{AO96} Lemma 4] \label{density mu readable} If $\mu < \log_{2m}\left(1+\frac{1}{4m-4}\right)$, then there exists a constant $C(\mu,m)$ such that
\[|M_\ell^\mu| \leq C(\mu,m)\ell^2\left(2m-\frac{5}{4}\right)^\ell.\] \qed
\end{lem}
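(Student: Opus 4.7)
The plan is to count elements of $M_\ell^\mu$ by stratifying according to the length $t = |r| \le \ell$, the length $s \in [\lceil t/2 \rceil, t]$ of a $\mu$-readable cyclic sub-word $w < r$, and the starting position of $w$ inside $r$ (at most $2t$ choices, including orientation). The free completion of $r$ outside $w$ consists of $t - s$ cyclically reduced letters, contributing at most $C_0(m)(2m-1)^{t-s}$ choices. Letting $N_s^\mu$ denote the number of $\mu$-readable reduced words of length $s$, this reduction gives
\[ |M_\ell^\mu| \le C_0(m) \sum_{t=1}^{\ell} \sum_{s = \lceil t/2 \rceil}^{t} 2t \cdot N_s^\mu \cdot (2m-1)^{t-s},\]
so the entire matter is a sharp estimate on $N_s^\mu$.

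To control $N_s^\mu$, I would encode each $\mu$-readable word of length $s$ as a pair $(\Gamma, p)$, where $\Gamma$ is an $X^\pm$-labeled witness graph with $E < \mu\ell$ edges and rank at most $m - 1$, and $p$ is a combinatorial path of length $s$ in $\Gamma$ along which $w$ is read. Folding $\Gamma$ does not shrink the set of readable words and can only decrease the rank, so I may assume $\Gamma$ is folded (at each vertex, at most one outgoing edge per label). Restricting to the image of $p$ further lets me assume $\Gamma$ is connected and traced out by $p$, so $|V(\Gamma)| \le E + 1 \le \mu\ell + 1$, while the rank bound forces $|V(\Gamma)| \ge E - m + 2$. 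The number of such rooted labeled graphs is at most a polynomial in $\ell$ times $(2m)^{\mu\ell}$, since each edge is specified by a source, a target, and a label in $X^\pm$.

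The main obstacle is a tight bound on the number of reduced walks of length $s$ in such a $\Gamma$. The naive estimate $(2m-1)^s$ is too coarse on its own; however, the hypothesis $\mu < \log_{2m}\!\bigl(1 + \tfrac{1}{4m-4}\bigr)$ is exactly what is needed to ensure $(2m)^{\mu} < \tfrac{4m-3}{4m-4}$, which will compensate the graph-counting factor $(2m)^{\mu\ell}$. Combined with the structural constraint that rank at most $m-1$ forces the average out-degree of the folded $\Gamma$ to be close to $1$ (so most steps of the walk have essentially no branching), a careful entropy argument — partitioning walks by their branching pattern at vertices of out-degree $\ge 2$ and using that the walk must revisit vertices once $s$ outgrows the linear size of $\Gamma$ — yields an estimate of the shape
\[ N_s^\mu \le C_1(\mu,m)\,\ell^{O(1)}\,(2m-1)^s \bigl(1 - \tfrac{1}{8m-4}\bigr)^{\ell}.\]

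Substituting this into the outer sum makes the two geometric factors in $s$ cancel, the sum over $s$ and $t$ collapses to a polynomial of degree $2$ in $\ell$, and the remaining exponential rate is
\[(2m-1)\bigl(1 - \tfrac{1}{8m-4}\bigr) = (2m-1) - \tfrac{1}{4} = 2m - \tfrac{5}{4},\]
yielding the claimed bound $|M_\ell^\mu| \le C(\mu,m)\,\ell^2\,(2m - \tfrac{5}{4})^{\ell}$. The hardest step is clearly the entropy estimate on $N_s^\mu$: the $5/4$ slack from the free-group growth rate $2m-1$ is tight, and the argument must track the precise cancellation between the number of admissible graph types and the constrained branching of walks in low-rank folded graphs, which is exactly what the specific inequality on $\mu$ is designed to permit.
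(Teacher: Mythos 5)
First, a point of reference: the paper does not prove this lemma. It is explicitly admitted (``We admit the following two lemmas in \cite{AO96}'') and cited as Lemma~4 of \cite{AO96}, so there is no internal proof to compare your attempt against; what follows measures your outline against what a complete argument must deliver. Your outer stratification over $t=|r|$, $s=|w|\geq t/2$ and the position of $w$, reducing everything to a bound on the number $N_s^\mu$ of $\mu$-readable words of length $s$, is the right frame, and the identity $(2m-1)\left(1-\tfrac{1}{8m-4}\right)=2m-\tfrac{5}{4}$ correctly explains where the constant comes from.

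The gap is exactly at the step you yourself flag as hardest: the estimate on $N_s^\mu$ is asserted, not proved, and two of the surrounding claims fail as stated. First, the number of $X^\pm$-labelled graphs with up to $\mu\ell$ edges is \emph{not} $\mathrm{poly}(\ell)\cdot(2m)^{\mu\ell}$: choosing a source and a target among roughly $\mu\ell$ vertices for each of roughly $\mu\ell$ edges contributes a factor of order $(\mu\ell)^{2\mu\ell}$, which is super-exponential in $\ell$ and destroys the bound. Any viable enumeration must be tied to the path tracing the graph (each step either opens a new edge towards a new vertex, opens a new edge towards an already visited vertex --- which raises the rank and hence can occur at most $m-1$ times --- or re-traverses an old edge); this is precisely where the rank hypothesis does its work, and it is absent from your sketch. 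Second, your key claim $N_s^\mu\leq C_1\ell^{O(1)}(2m-1)^s\left(1-\tfrac{1}{8m-4}\right)^{\ell}$ is ill-posed, since $N_s^\mu$ depends only on $s$; and if one repairs it by replacing $\ell$ with $s$ it becomes $N_s^\mu\lesssim(2m-\tfrac54)^s$, which is too weak: at the extremal case $s=t/2$ one gets $(2m-\tfrac54)^{t/2}(2m-1)^{t/2}>(2m-\tfrac54)^{t}$. What is actually required is the stronger per-letter saving $N_s^\mu\leq\mathrm{poly}(s)\,(2m-\tfrac32)^s$, which suffices because $(2m-\tfrac32)(2m-1)=4m^2-5m+\tfrac32<4m^2-5m+\tfrac{25}{16}=(2m-\tfrac54)^2$, and which is exactly what the hypothesis on $\mu$ is calibrated to give: with $n<\mu s$ ``new-edge'' steps at cost at most $2m$ each and the remaining steps at cost at most $2m-2$, one gets $(2m)^{\mu s}(2m-2)^{(1-\mu)s}\leq\left((2m-2)\cdot\tfrac{4m-3}{4m-4}\right)^s=(2m-\tfrac32)^s$. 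None of this bookkeeping, nor the control of branching at the boundedly many vertices of degree at least $3$ that makes the $2m-2$ per-step cost legitimate, is carried out; so the proposal is an outline that locates the difficulty without resolving it.
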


Recall that $|B_\ell| = (2m-1)^{\ell+O(1)}$, so $(M_\ell^\mu)$ is a densable sequence of subsets of $(B_\ell)$ with density $\log_{2m-1}\left(2m-\frac{5}{4}\right)$.

\begin{lem}[\cite{AO96} \S4]\label{lambda-mu condition} Let $G = \langle X|R\rangle$ be a group presentation where $X = \{x_1,\dots,x_m\}$ and $R$ is a subset of $B_\ell$. Suppose that \[\mu < \log_{2m}\left(1+\frac{1}{4m-4}\right) \textup{\quad and\quad} \lambda\leq \frac{\mu}{15m+3\mu}.\] 

If $R$ does not intersect $M_\ell^\mu$, has no true powers, and satisfies $C'(\lambda)$, then every $(m-1)$-generated subgroup of $G$ is free. \qed
\end{lem}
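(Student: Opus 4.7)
The plan is to prove the contrapositive: assuming some $(m-1)$-generated subgroup $H \leq G$ fails to be free, construct a relator $r \in R$ belonging to $M_\ell^\mu$, contradicting $R \cap M_\ell^\mu = \vide$. Write $H = \langle h_1, \dots, h_{m-1}\rangle$ with the $h_i$ given as reduced words on $X^\pm$, and form the finite $X^\pm$-labelled graph $\Gamma_0$ obtained by wedging circles labelled $h_1,\dots,h_{m-1}$ at a basepoint and iteratively folding (the Stallings construction). Then $\Gamma_0$ has rank at most $m-1$, and every element of $H$ is read along a closed path at the basepoint.

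If $H$ is not free in $G$, pick a reduced word $w\in F(X)$ of minimal length that labels a non-trivial closed path in $\Gamma_0$ and satisfies $w=_G 1$, and let $D$ be a reduced van Kampen diagram for $w$ over $R$ with the minimal number of faces. Since $\lambda \leq \mu/(15m+3\mu) < 1/6$ we are inside Greendlinger's regime: some face $\Pi$ of $D$, bounded by a cyclic conjugate of a relator $r \in R$, intersects $\partial D$ along an arc $\alpha$ of length $>(1-3\lambda)|\partial \Pi| \geq |r|/2$. The cyclic subword $u < r$ labelling $\alpha$ therefore satisfies $|u|\geq |r|/2$ and is readable along a path in $\Gamma_0$, because $\partial D$ reads $w$, which itself reads on $\Gamma_0$.

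To conclude that $r \in M_\ell^\mu$, one must exhibit an $X^\pm$-labelled graph $\Gamma$ of rank at most $m-1$, with fewer than $\mu |r|$ edges, on which $u$ can be read. The natural choice is the minimal subgraph of $\Gamma_0$ traversed by the image of $\alpha$; it automatically has rank at most $m-1$ and reads $u$. The edge-count is then handled in two pieces: the embedded, ``freely traversed'' part of $\Gamma$ is a graph of rank at most $m-1$ with a few attached trees, whose total length is controlled by $3\mu$ times the length of $r$; and the edges that come from self-identifications correspond to pieces of relators, each of length at most $\lambda |r|$ by $C'(\lambda)$, with the combinatorics forcing at most $15m$ such pieces. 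Summing gives $|E(\Gamma)| \leq (15m+3\mu)\lambda\,|r| \leq \mu|r|$, and hence $r \in M_\ell^\mu$, the desired contradiction.

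The main obstacle is precisely the last combinatorial edge-count, which is the technical core of Arzhantseva--Ol'shanskii: one must carefully isolate the subgraph of $\Gamma_0$ supporting the reading of $\alpha$, then separate its edges into embedded arcs (bounded by the rank constraint $\operatorname{rank}(\Gamma)\leq m-1$ via a standard tree-plus-cycles decomposition) and pieces (each bounded by $\lambda |r|$ using $C'(\lambda)$). The hypothesis that $R$ has no true powers enters here to exclude the degenerate situation where a single relator overlaps itself along a cyclic shift, which would allow long self-pieces not controlled by $C'(\lambda)$ alone; together with $C'(\lambda)$, it forces every such self-identification to arise from a genuine piece. The constant $15m+3\mu$ in the bound on $\lambda$ is exactly what is needed to close the resulting inequality $(15m+3\mu)\lambda \leq \mu$, and the threshold $\mu < \log_{2m}(1+1/(4m-4))$ is only invoked later, through Lemma \ref{density mu readable}, when this deterministic statement is fed into the probabilistic argument; no such arithmetic constraint on $\mu$ beyond $\mu \leq 1$ is needed for the lemma itself.
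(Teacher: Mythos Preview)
The paper does not prove this lemma at all: it is explicitly \emph{admitted} from \cite{AO96}, stated with a terminal \qed and prefaced by ``We admit the following two lemmas in \cite{AO96}.'' So there is no proof in the paper to compare against; your proposal goes well beyond what the paper does by attempting to reconstruct the Arzhantseva--Ol'shanskii argument.

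As a sketch of that argument your outline is broadly faithful: Stallings graph for $H$, a minimal van Kampen diagram, Greendlinger's lemma to extract a long boundary arc of some relator $r$ that reads on the graph, and then a combinatorial bound on the number of edges of the relevant subgraph. But it is a sketch, not a proof. You yourself identify the edge-count as ``the main obstacle'' and then dispatch it in two sentences, asserting that the embedded part contributes at most $3\mu|r|$ edges and that ``the combinatorics forc[e] at most $15m$ such pieces,'' with no justification for either constant. In \cite{AO96} this is precisely where the work lies: one does not simply take the minimal subgraph traversed by $\alpha$, but runs an iterative reduction procedure on the labelled graph (collapsing arcs, removing hairs, controlling how many distinct relator-pieces can appear along the boundary path), and the constants $15m$ and $3\mu$ emerge from bookkeeping in that procedure. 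Without reproducing that analysis, the inequality $(15m+3\mu)\lambda \leq \mu$ is just the hypothesis restated, not something you have derived.

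Your closing remark that the hypothesis $\mu < \log_{2m}(1+1/(4m-4))$ is not used in the lemma itself, only later via Lemma~\ref{density mu readable}, is a reasonable observation about how the constants are organized, but since the paper simply imports the statement verbatim from \cite{AO96} it neither confirms nor contradicts this.
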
\quad

\begin{proof}[\textbf{Proof of Theorem \ref{AO with density}}] We look for a density $d(m)\leq 1/2$ such that for any $d<d(m)$ a.a.s. the random group $G_\ell(m,d) = \langle X| R_\ell \rangle$ satisfies the conditions of Lemma \ref{lambda-mu condition} with $\mu = \log_{2m} \left(1+\frac{1}{4m-4}\right)-\varepsilon$ and $\lambda = \frac{\mu}{15m+3\mu}$ with an arbitrary small $\varepsilon>0$. 

The set of true powers in $B_\ell$ is with density $1/2$. By the intersection formula (Corollary \ref{random-fixed intersection}), because $d<1/2$, a.a.s. $R_\ell$ has no true powers. By Lemma \ref{density mu readable} and the intersection formula, we need $d(m) < 1 - \dens(M_\ell^\mu) <1 - \log_{2m-1}\left(2m-\frac{5}{4}\right)$ so that a.a.s. $R_\ell$ does not intersect $M_\ell^\mu$ by the intersection formula.  

At the end we need a.a.s. $R_\ell$ satisfies $C'(\lambda)$ with
\[\lambda = \frac{\log_{2m}\left(1+\frac{1}{4m-4}\right)-\varepsilon}{15m+3\log_{2m}\left(1+\frac{1}{4m-4}\right)-3\varepsilon}.\]
By Theorem \ref{lambda small cancellation}, we need $d(m)<\lambda/2$. Note that this inequality implies the previous one. For $\varepsilon$ small enough we have $\lambda > \frac{1}{60m^2\ln(2m)}$. It is enough to take \[d(m)=\frac{1}{120m^2\ln(2m)}.\]
\end{proof}



\begin{thebibliography}{99}

\bibitem{ALS15} Sylwia Antoniuk, Tomasz Łuczak and Jacek Świątkowski, 
    Random triangular groups at density 1/3. \textit{Compositio Math.} \textbf{151} (2015), 167–178.

\bibitem{AO96} G. N. Arzhantseva and A. Yu. Ol'shanskii,
    The class of groups all of whose subgroups with lesser number of generators are free is generic.
    \textit{Mat. Zametki} \textbf{59} (1996), 350–355.
    
\bibitem{Arz97} G. N. Arzhantseva,
    On groups in which subgroups with a fixed number of generators are free (Russian), \textit{Fundam. Prikl. Mat.} 3 (1997), n. 3, 675–683.

\bibitem{Arz98} G. N. Arzhantseva, 
    Generic properties of finitely presented groups and Howson’s theorem, \textit{Comm. Alg.} 26 (1998), n. 4, 3783–3792.

\bibitem{Arz00} G. N. Arzhantseva,
    A property of subgroups of infinite index in a free group, \textit{Proc. Amer. Math. Soc.} 128 (2000), n. 11, 3205–3210.

\bibitem{BNW} F. Bassino, C. Nicaud and P. Weil,
    Random presentations and random subgroups: a survey. 
    In \textit{Complexity and Randomness in Group Theory - GAGTA Book 1}, de Gruyter, (2020).
    
\bibitem{Ghys04} E. Ghys,
    Groupes aléatoires (d’après Misha Gromov, ...).
    \textit{Astérisque} \textbf{294} (2004), 173–204.

\bibitem{Gro87} M. Gromov, 
    Hyperbolic groups. In \textit{Essays in Group Theory}.
    Springer, New York, 1987, 75–263.

\bibitem{Gro93} M. Gromov, 
    Finitely presented groups. In  \textit{Asymptotic invariants of infinite groups. Geometric Group Theory.}
    London Math. Soc. Lecture Note Ser. 182 , 1993, 269-282.

\bibitem{Gro03} M. Gromov,
    Random walks in random groups.
    \textit{Geom. Funct. Analysis} \textbf{13} (2003), no. 1, 73–146

\bibitem{Guba86} V. S. Guba,
    Conditions under which 2-generated subgroups in small cancellation groups are free.
    \textit{Izv. Vyssh. Uchebn. Zaved. Mat.} \textbf{87} (1986), no. 7, 12–19.

\bibitem{LS77} R. Lyndon, P. Schupp,
    Combinatorial Group Theory. 
    Springer-Verlag, 1977.

\bibitem{KS05} I. Kapovich and P. Schupp, 
    Genericity, the Arzhantseva-Ol’shanskii method and the isomorphism problem for one relator groups. \textit{Math. Ann.} 331 (2005), n. 1, 1–19.

\bibitem{KS08} I. Kapovich and P. Schupp,
    On group-theoretic models of randomness and genericity.
    \textit{Groups, Geometry and Dynamics} \textbf{2} (2008), no. 3, 383-404.
    
\bibitem{Oll03} Yann Ollivier,
    Critical densities for random quotients of hyperbolic groups. 
    \textit{C. R. Math. Acad. Sci. Paris} \textbf{336} (2003), no. 5, 391–394.

\bibitem{Oll04} Yann Ollivier,
    Sharp phase transition theorems for hyperbolicity of random groups.
    \textit{Geom. Funct. Anal.} \textbf{14} (2004), 595–679.

\bibitem{Oll05} Yann Ollivier, 
    A January 2005 invitation to random groups. \textit{Ensaios Matemáticos} \textbf{10} (2005).

\bibitem{Oll07} Yann Ollivier,
    Some small cancellation properties of random groups. \textit{International Journal of Algebra and Computation} \textbf{17} (2007), no. 01, 37-51.

\bibitem{OW11} Yann Ollivier and Daniel T. Wise, 
    Cubulating random groups at density less than 1/6. \textit{Trans. Amer. Math. Soc.} 363.9 (2011), 4701–4733. 

\bibitem{Zuk03} Andrzej \.Zuk,
    Property (T) and Kazhdan constants for discrete groups. \textit{Geom. Funct. Anal.} \textbf{13} (2003), 643–670.

\end{thebibliography}
\end{document}